\newcommand{\D}{\mathcal{D}}
\newcommand{\K}{\mathbb{K}}
\newcommand{\C}{\mathbb{C}}
\newcommand{\R}{\mathbb{R}}
\newcommand{\N}{\mathbb{N}}
\newcommand{\G}{\mathcal{G}}
\newcommand{\cO}{\mathcal{O}}
\renewcommand{\S}{\mathcal{S}}
\renewcommand{\Re}{\mathrm{Re}\,}
\renewcommand{\Im}{\mathrm{Im}\,}
\newcommand{\A}{\mathcal{A}}
\newcommand{\M}{\mathcal{M}}
\newcommand{\one}{\mathbbm{1}}
\newcommand{\U}{\mathcal{U}}
\newcommand{\disp}{\mathrm{disp}}
\newcommand{\clos}{\mathcal{C}}
\newcommand{\Cexp}{C_{\mathrm{exp}}}
\newcommand{\Calg}{C_{\mathrm{alg}}}
\renewcommand{\ker}{\mathcal{N}}
\newcommand{\ran}{\mathcal{R}}
\renewcommand{\dim}{\mathrm{dim}\,}
\newcommand{\st}{\mathfrak{s}}
\newcommand{\un}{\mathfrak{u}}
\renewcommand{\c}{\mathfrak{c}}
\newcommand{\pt}{\mathrm{pt}}
\newcommand{\ess}{\mathrm{ess}}
\renewcommand{\P}{\mathcal{P}}
\newcommand{\sect}
{
  \setcounter{equation}{0}
  \setcounter{figure}{0}
  \section
}
\theoremstyle{definition}
\newtheorem{definition}{Definition}[section]
\newtheorem{assumption}{Assumption}
\newtheorem{remark}[definition]{Remark}
\theoremstyle{plain}
\newtheorem{theorem}[definition]{Theorem}
\newtheorem{lemma}[definition]{Lemma}
\newtheorem{corollary}[definition]{Corollary}
\newtheorem{proposition}[definition]{Proposition}
\begin{document}
\title{Algebraic rates of stability
for front-type modulated waves \\ in Ginzburg Landau equations}
\setlength{\parindent}{0pt}

\begin{center}
{\Large Algebraic rates of stability
for front-type modulated waves \\ in Ginzburg Landau equations} \\
\vspace{12pt}
Wolf-J\"urgen Beyn\footnotemark[1] and Christian D\"oding\footnotemark[2] \\
\vspace{12pt}
April 12, 2024
\end{center}

\footnotetext[1]{Department of Mathematics, Bielefeld University, 33501 Bielefeld, Germany, \\ e-mail: \textcolor{blue}{beyn@math.uni-bielefeld.de}.}
\footnotetext[2]{Institute for Numerical Simulation, University of Bonn, 53115 Bonn, Germany, \\ e-mail: \textcolor{blue}{doeding@ins.uni-bonn.de}.}

\noindent
\begin{center}
\begin{minipage}{0.8\textwidth}
  {\small
    \textbf{Abstract.}
  We consider the stability of front-type modulated waves in the complex Ginzburg-Landau equation (CGL). The waves occur
  in the bistable regime (e.g. of the quintic CGL) and connect the zero state to a spatially homogenous state oscillating in time.
  For initial perturbations that decay at a certain algebraic rate, we prove convergence to the wave with asymptotic
  phase. The convergence holds in algebraically weighted Sobolev norms and with an algebraic rate in time, where
  the asymptotic phase is approached by one order less than the profile. On the technical side we use the theory of exponential
  trichotomies to separate the spatial modes into growing, weakly decaying, and strongly decaying ones. This allows us
  to derive resolvent and semigroup estimates in weighted Sobolev norms and to close the argument with a Gronwall
  lemma involving algebraic weights.
}
\end{minipage}
\end{center}

\vspace{12pt}
\noindent
\textbf{Key words.} front-type modulated waves, nonlinear stability, Ginzburg-Landau equation, equivariance, essential spectrum.

\vspace{12pt}
\noindent
\textbf{AMS subject classification.}  35B35, 35B40, 35C07, 35K58, 35Q56.
 
\sect{Introduction}

The topic of this paper is the stability of  front-type modulated waves in complex-valued semilinear parabolic evolution equations of the form 
\begin{align} \label{Evo}
	U_t = \alpha U_{xx} + G(|U|^2)U, \quad x \in \R,\, t \ge 0,
\end{align}
where $U = U(x,t) \in \C$, $\alpha \in \C$ with $\Re \alpha > 0$, and a smooth nonlinearity $G: \R_+ \rightarrow \C$
is given. A prototypical example of such an equation is the quintic Ginzburg-Landau equation (QCGL)
for which $G$ is a quadratic polynomial
\begin{align} \label{QCGL}
  G(|U|^2)=  \beta_0 + \beta_2 |U|^2 + \beta_4 |U|^4, \quad \beta_0,\beta_2,\beta_4 \in \C
\end{align}
and which we use for illustration.
We consider modulated waves of the form
\begin{align}\label{MW}
	U_\star(x,t) = e^{- i \omega t} V_\star(x - ct) 
\end{align}
which have frequency $\omega \in \R$, velocity $c >0$, and a profile $V_\star: \R \rightarrow \C$ with a front-like shape, i.e.,
\begin{align} \label{Vasym}
	V_\star(x) \rightarrow \begin{cases} V_\infty, & \text{ as } x \rightarrow + \infty, \\
	0, & \text{ as } x \rightarrow - \infty \end{cases}
\end{align}
for some $V_\infty \in \C$, $V_\infty \neq 0$. In Figure \ref{Figure:TOF3D} we illustrate the profile of such a wave and its motion
in an $(x,\Re V, \Im V)$ frame. The wave belongs to the class of front-type modulated waves as classified
by  Sandstede and Scheel in \cite[Sec. 2.2]{SandstedeScheel04}. However, it does not asymptote to wave trains in both
directions $x \to \pm \infty$, hence, in a strict sense, does  not fall into their category of defects;
see \cite[Introduction]{BeynDoeding22} for a more detailed comparison. Note that we used   the
term traveling oscillating front (TOF) in  \cite{BeynDoeding22} to denote this specific type of wave.
Waves of the form \eqref{MW} with \eqref{Vasym} occur in the so called bistable regime, i.e. they connect the stable equilibrium
$0$ at $- \infty$ with the spatially homogeneous state $V_{\infty}$ at $+\infty$ which belongs to a stable limit
cycle of the ODE $U_t=G(|U|^2)U$ obtained from \eqref{Evo} as  $x \to +\infty$.
Numerical simulations of such waves are shown in \cite[Introduction]{BeynDoeding22}, and we note
that there is numerical and theoretical evidence for their generic occurrence in QCGL (cf. \cite{Saarloos}).

\begin{figure}[h!]
\centering
\includegraphics[scale=0.4]{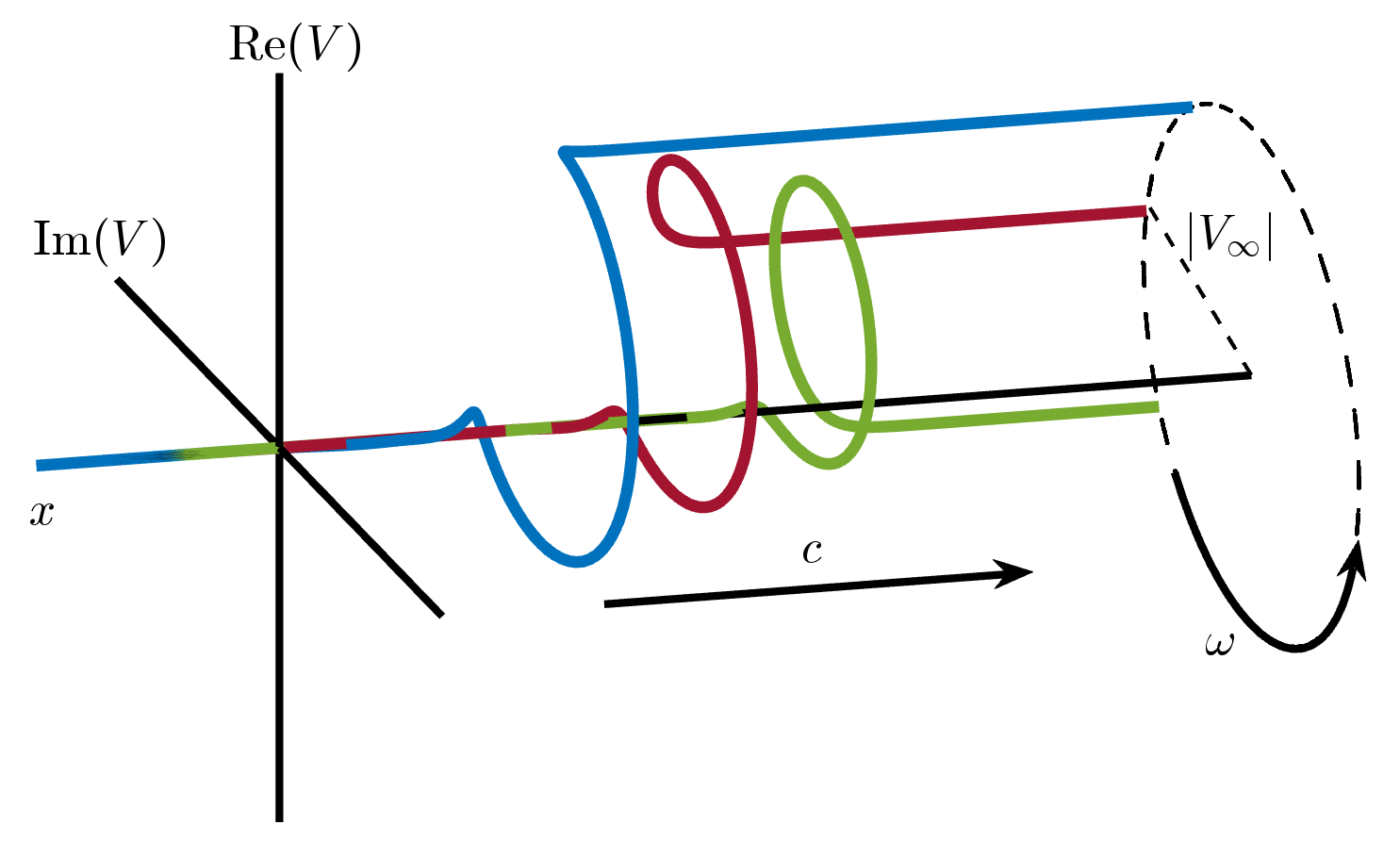}
\caption{Front-like modulated wave at three successive time instances (blue, red, green).} \label{Figure:TOF3D}
\end{figure}

The goal of this work is to prove asymptotic stability with asymptotic phase of the solutions to the perturbed problem
\begin{equation*}
  U_t= \alpha U_{xx}+G(|U|^2)U, \quad U(\cdot,0)= V_{\star} + V_0.
\end{equation*}
Our main result Theorem \ref{mainresult} assumes the initial perturbation $V_0$ and its derivative to decay algebraically sufficiently fast, i.e.
\begin{align} \label{eq0:asyminit}
  |V_0(x)|, |\partial_x V_0(x)| & \asymp (1+x^2)^{-\frac{k+m}{2}}, \quad k > m > \tfrac{9}{2}.
\end{align}
Then we conclude algebraic decay both in space and time for the perturbed solution as follows:
\begin{equation} \label{eq0:asymsol}
\begin{aligned}
  U(x,t)-V_{\star}(x- \tau(t))  & \asymp (1+x^2)^{-\frac{k}{2}}(1+t)^{1-\frac{m^{\star}}{2}}, \quad  \tau(t)-\tau_{\infty} \asymp (1+t)^{2-\frac{m^{\star}}{2}}.
\end{aligned}
\end{equation}
where $m^{\star}= \lfloor m \rfloor  + \max(0,2q-1)$, $q=m-\lfloor m \rfloor$.
The value $\tau_{\infty}$ is the asymptotic phase which is approached by one order less in time than the shifted
profile. In fact, the asymptotics above hold in suitable weighted Sobolev spaces as specified in
\eqref{eq2:Lspace}, \eqref{eq2:Wspace}.
The result complements our main theorem from \cite{BeynDoeding22} where $V_0$ was allowed to
be the sum of an exponentially localized  function and of a profile which limits at $+ \infty$ to a small but nonzero
value.

The coupling of algebraic decay rates in space for initial perturbations  to algebraic decay rates in time is a well-known phenomenon
in stability studies of nonlinear waves. As a general rule, it occurs when the essential spectrum of the linearization touches
the imaginary axis at the origin. In the following we discuss some results of this type in the literature and
expand on differences and similarities to our work.

One of the most studied topics is the so-called marginal stability of critical fronts
in the Ginzburg-Landau equation; see the work of Avery and Scheel \cite{AveryScheel21,AveryScheel22,AveryScheel24}
and the early contribution \cite{Bricmont94}.
The recent works \cite{AveryScheel22,AveryScheel24} give a rather comprehensive overview of
the development of the theory. Critical fronts typically occur in the cubic Ginzburg-Landau equation and belong to the
monostable case. There are several differences when compared to the bistable case above:
in addition to translation and gauge invariance which is also present in our case, there is a whole
interval of velocities of which only the extreme one is marginally stable; as a result one has
to require  exponential decay of the invading perturbation which is not needed in our result above.
The proofs have several similarities, such as the algebraically weighted resolvent estimates via suitable contours,
but differ in technical details, such as the additive decomposition used in \eqref{eq0:asymsol} when  compared
to the polar coordinate ansatz used in  \cite{AveryScheel24}.
We also mention that the time rates for the critical fronts in \cite{AveryScheel24} are shown to be optimal
while  the threshold $\tfrac{9}{2}$ in \eqref{eq0:asyminit}, \eqref{eq0:asymsol}  is
open for improvement. However, we expect, as a general phenomenon, that the spatial rate $k+m$ for the initial data in \eqref{eq0:asyminit} splits
into a spatial rate $k$ and a temporal rate $m$ for the perturbed solution in \eqref{eq0:asymsol}.

Next we  mention the work of Kapitula \cite{Kapitula91,Kapitula94} who derives a stability result
w.r.t. the $L^{\infty}$-norm for a version of \eqref{Evo} with a specific form of the nonlinearity
and a small imaginary part of the diffusion coefficient $\alpha$; see \cite[Theorem 7.2]{Kapitula94}. The analysis proceeds
via the associated Evans function
and algebraic time estimates are restricted to a particular parameter set ($\rho=0$ in  \cite[Theorem 7.2]{Kapitula94}),
while exponential weights are required for the general case. Moreover, the decomposition of the dynamics is
slightly different from ours and some technical arguments
  of the algebraic estimates need to be corrected; see Lemma \ref{est} and Remark \ref{rem:Kapitula} for more details.

  In the following we give a rough outline of the subsequent sections and emphasize the major mathematical
  arguments.
  In Section \ref{sec1} we list the assumptions
  and formulate our main result Theorem \ref{mainresult} in algebraically weighted Sobolev spaces.
  In Section \ref{sec3} we analyze in detail the spectrum of the operator obtained by linearizing about the
  wave \eqref{MW} in a co-moving frame. Our assumptions lead to quadratic contact of the essential spectrum with the imaginary
  axis (Figure \ref{essspec}) and we use the theory of exponential trichotomies (Appendix  \ref{appendixA}) to
  handle the separation of spatial modes on the positive axis into a weakly and a strongly exponentially
  decaying part. This serves as a suitable preparation for the resolvent estimates (Proposition \ref{propEV}) which
  hold in a crescent-like domain of the complex plane (Figure \ref{essspec}). 
  In the ensuing sections we derive algebraic decay rates for the semigroup generated by the linearized operator
  (Theorem \ref{poly4:semigroup}) and decompose the nonlinear dynamics into an equation along the shifted
  profile and orthogonal to it (Lemma \ref{poly:lemmatrafo}, Eq.\eqref{wtau-integral}). Using remainder estimates
  in algebraically weighted Sobolev norms (Lemma \ref{poly:estimates}) and a Gronwall lemma adapted
  to algebraic integral kernels (Lemma \ref{poly:Gronwall}) allows us to conclude nonlinear stability
  (Theorem \ref{poly:Stabilitywt}).
  
\sect{Assumptions and main results}
\label{sec1}
Instead of working with the complex-valued equation \eqref{Evo} it is convenient to analyze the corresponding real-valued system as a classical reaction diffusion equation. With $U = u_1 + i u_2$, $u_j(x,t) \in \R$, $\alpha = \alpha_1 + i \alpha_2$, $\alpha_j \in \R$ and $G = g_1 + i g_2$, $g_j: \R \rightarrow \R$ we rewrite \eqref{Evo}  equivalently  as
\begin{align} \label{rEvo}
	u_t = A u_{xx} + f(u), \quad x \in \R, \, t \ge 0,
\end{align}
where
\begin{align} \label{SystemDef}
	A = \begin{pmatrix}
	\alpha_1 & - \alpha_2 \\ \alpha_2 & \alpha_1
	\end{pmatrix}, \quad f(u) = g(|u|^2)u, \quad g(\cdot) = \begin{pmatrix}
	g_1(\cdot) & -g_2(\cdot) \\ g_2(\cdot) & g_1(\cdot)
	\end{pmatrix}.
\end{align}
A front-type modulated wave of \eqref{rEvo} is then given by a particular solution of the form
\begin{align*}
	u_\star(x,t) = R_{-\omega t} v_\star(x-ct), \quad R_\theta = \begin{pmatrix}
	\cos \theta & -\sin \theta \\ \sin \theta & \cos \theta
	\end{pmatrix}
\end{align*}
where $c >0$ and the profile $v_\star$ satisfies
\begin{equation} \label{eq2:vasymp}
	\lim_{\xi \rightarrow -\infty} v_\star(\xi) = 0, \quad \lim_{\xi \rightarrow +\infty} v_\star(\xi) = v_\infty \in \R^2.
\end{equation}
Here the asymptotic rest state $v_\infty \in \R^2$ is given by $v_\infty = (\Re V_\infty, \Im V_\infty)^\top$. Next, it is natural to transform \eqref{rEvo} into a co-moving frame by setting $u(x,t) = R_{-\omega t} v(\xi,t)$, $\xi = x-ct$ so that $u$ solves \eqref{rEvo}
if and only if $v$ solves 
\begin{align} \label{comovsys}
	v_t = A v_{\xi\xi} + cv_\xi + S_\omega v + f(v), \quad x \in \R, \, t \ge 0, \, S_\omega = \begin{pmatrix}
	0 & -\omega \\ \omega & 0
	\end{pmatrix}.
\end{align}
Then the profile $v_\star$ becomes a stationary solution of \eqref{comovsys}, i.e.,
\begin{align} \label{statcomov}
	0 = A v_{\star,xx} + cv_{\star,x} + (S_\omega + g(v_\star)) v_\star, \quad x \in \R.
\end{align}
To obtain the stability of the stationary solution $v_\star$ of \eqref{comovsys}, we
study the long time dynamics of the perturbed initial value problem
\begin{align} \label{perturbsys}
	v_t = A v_{xx} + cv_x + S_\omega v + f(v), \quad v(0) = v_\star + u_0 
\end{align}
where the initial perturbation $u_0$ is small in a suitable sense. The stability behavior or long time dynamics of the solution of \eqref{perturbsys} strongly depends on the norm in which  $u_0$ is measured. Before making this precise, let us collect our basic assumptions for the system \eqref{perturbsys}.
\begin{assumption} \label{A1}
The coefficient $\alpha$ and the function $g$ satisfy
\begin{align*}
	\alpha_1 > 0, \quad g \in C^3(\R,\R^{2,2}), \quad g_1(0) < 0.
\end{align*}
\end{assumption}

\begin{assumption} \label{A2}
There exists a front-type modulated wave solution $u_\star$ of \eqref{rEvo} with profile $v_\star \in C^2_b(\R,\R^2)$, speed $c > 0$, frequency $\omega \in \R$, $\omega \neq0$ and asymptotic rest state $v_\infty= ( |v_\infty|, 0 )^\top \in \R^2$ such that
\begin{align*}
	g_1'(|v_\infty|^2) < 0.
\end{align*}  
\end{assumption}
Recall that one can multiply the wave $U_{\star}$ by a phase factor $e^{i \theta}$ so that the rest state has this particular form.
\begin{assumption} \label{A3}
The coefficient $\alpha$ and the function $g$ satisfy
\begin{align*}
	\alpha_1 g_1'(|v_\infty|^2) + \alpha_2 g_2'(|v_\infty|^2) < 0. 
\end{align*}
\end{assumption}

Recall from \cite[Lemma 2.4]{BeynDoeding22} that \eqref{eq2:vasymp} and Assumption \ref{A1} imply
\begin{equation*}
  g(|v_{\infty}|^2)=-S_{\omega}, \quad \lim_{x \to \pm \infty}v_{\star,x}(x)=\lim_{x \to \pm \infty}v_{\star,xx}(x)=0.
\end{equation*}

The linearization $L$ of \eqref{perturbsys} at the profile $v_\star$ is given by the operator
\begin{align} \label{LatL}
	Lu = Au_{xx} + cu_x + S_\omega u + Df(v_\star) u.
\end{align}
In general, we consider $L$ as a linear operator on a Banach space $X$ defined on a domain $\D(L) \subset X$ and write $L \in \clos[X]$ if $L$ is  a closed and densely defined linear operator.  A simple example is  $X = L^2$ so that $L \in \clos[L^2]$ with $\D(L) = H^2$. In any case, for $L \in \clos[X]$ we denote its resolvent set by
\begin{align*}
	\mathrm{res}(L) := \{ s \in \C: \, sI-L: \D(L) \rightarrow X \text{ is bijective} \}
\end{align*}
and its spectrum by $\sigma(L) = \C \setminus \mathrm{res}(L)$. The spectrum is subdivided into the point spectrum 
\begin{align*}
	\sigma_{\pt}(L) := \{ s \in \sigma(L):\, sI - L \text{ is Fredholm of index } 0 \}
\end{align*}
and the essential spectrum $\sigma_{\ess}(L) = \sigma(L) \setminus \sigma_{\pt}(L)$. Let us denote by $\ker(L) \subset \D(L)$ the nullspace of $L \in \clos[X]$.

\begin{assumption} \label{A4}
For the operator $L \in \clos[L^2]$ from \eqref{LatL} with $\D(L) = H^2$ there is $\beta_E > 0$ such that $\Re s < - \beta_E$ holds for all $s \in \sigma_{\mathrm{\pt}}(L)$. Moreover,
\begin{align*}
	\dim \ker(L) = \dim \ker(L^2) = 1.
\end{align*}
\end{assumption}
Indeed, we show that $\ker(L)$ is spanned by $v_{\star,x}$. However, it turns out that $L \in \clos[L^2]$ is not Fredholm of index $0$ so that $0 \in \sigma_{\ess}(L)$ and $v_{\star,x}$ does not belong to an eigenvalue in $\sigma_{\mathrm{pt}}(L)$. \\
In order to specify the  smallness of the perturbation $u_0$ in \eqref{perturbsys}, we introduce the algebraic weight function of linear growth
\begin{align*}
	\eta(x) := (x^2 + 1)^{\frac{1}{2}}, \quad x \in \R.
\end{align*}
Further, let us define associated algebraically weighted Lebesgue spaces for arbitrary $k \ge0$:
\begin{equation} \label{eq2:Lspace}
  \begin{aligned}
  L^2_k(\R,\R^n) := \{ v \in L^2(\R,\R^n):\, \eta^k v \in L^2(\R,\R^n) \}, \quad (v,w)_{L^2_k}:= (\eta^k v, \eta^k w)_{L^2},
  \end{aligned}
  \end{equation}
and Sobolev spaces for $\ell \in \N$
\begin{equation} \label{eq2:Wspace}
  \begin{aligned}
	H^\ell_k(\R,\R^n) & := \{ v \in L^2_k(\R,\R^n) \cap H^\ell_{\mathrm{loc}}(\R,\R^n):\, \partial^j v \in L^2_k(\R,\R^n),\, 1\le j \le \ell \}, \\
	\| v \|_{H^\ell_k}^2 & := \sum_{j = 0}^\ell \| \partial^j v \|_{L^2_k}^2.
  \end{aligned}
  \end{equation}
With theses preparations we  state our main result.
\begin{theorem} \label{mainresult}
Let Assumption \ref{A1}-\ref{A4} be satisfied and let $m,k,\mu > 0$ be given with $\frac{9}{2} < m < m + \mu \le k$. Then there exist constants $\varepsilon_0, C_\infty > 0$, $K \ge 1$ such that the following statements hold for all initial perturbations $u_0 \in H^2_k$ which satisfy $\| u_0 \|_{H^1_{k + m + \mu}} < \varepsilon_0$: The initial-value problem \eqref{perturbsys} has a unique global solution $u$ which can be represented as
\begin{align*}
	u(t) = v_\star(\cdot - \tau(t)) + w(t), \quad t \in [0,\infty)
\end{align*}
for suitable $\tau \in C^1([0,\infty),\R)$ and $w \in C((0,\infty),H^2_k) \cap C^1([0,\infty), L^2_k)$. Further, there is an asymptotic phase $\tau_\infty = \tau_\infty(u_0) \in \R$ with $|\tau_\infty| \le C_\infty \| u_0 \|_{H^1_{k + m + \mu}}$ such that the
    following estimates hold for $t \ge 0$:
\begin{equation} \label{decayestimates}
\begin{aligned}
	& \| w(t) \|_{H^1_k} \le\frac{K}{(1+t)^{\frac{m^*-2}{2}}}\| u_0 \|_{H^1_{k + m + \mu}}, \quad |\tau(t) - \tau_\infty| \le \frac{K}{(1+t)^{\frac{m^*-4}{2}}} \| u_0 \|_{H^1_{k + m + \mu}}
\end{aligned}
\end{equation}
where $m^* = \lfloor m \rfloor + \max(0,2q-1)$ for $m = \lfloor m \rfloor + q$, $0 \le q < 1$.
\end{theorem}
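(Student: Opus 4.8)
The plan is to carry out the program outlined in the introduction: freeze the front by a time-dependent shift $\tau(t)$, project the perturbation off the neutral mode $v_{\star,x}$, and run a variation-of-constants iteration driven by the algebraically decaying semigroup estimates. \textbf{Step 1 (decomposition and phase equation).} Write $u(t)=v_\star(\cdot-\tau(t))+w(t)$ and impose a phase condition rendering $w(t)$ transversal to $v_{\star,x}(\cdot-\tau(t))$, say $\langle\psi(\cdot-\tau(t)),w(t)\rangle=0$ for a fixed, exponentially localized functional $\psi$ biorthogonal to $v_{\star,x}$. Inserting this ansatz into \eqref{perturbsys} and using \eqref{statcomov} produces a coupled system: a scalar ODE $\dot\tau=N_1(\tau,w)$, solvable for $\dot\tau$ by the implicit function theorem while $w$ stays small, together with the PDE $w_t=Lw+\dot\tau\,v_{\star,x}+\mathcal{R}(\tau,w)$, where $\mathcal{R}$ gathers the superquadratic nonlinear remainder $f(v_\star(\cdot-\tau)+w)-f(v_\star(\cdot-\tau))-Df(v_\star(\cdot-\tau))w$ together with the $\tau$-dependent commutator terms, all of which are exponentially localized or quadratic in $w$. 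This is the content of Lemma \ref{poly:lemmatrafo} and the integral reformulation \eqref{wtau-integral}.

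\textbf{Step 2 (linear estimates).} Because $0\in\sigma_{\mathrm{ess}}(L)$ there is no spectral gap, so one works with the rank-one spectral projection $P$ onto $\mathrm{span}(v_{\star,x})$ (rank one by Assumption \ref{A4}) and the resolvent bounds of Proposition \ref{propEV} on the crescent-shaped region to the right of the essential spectrum. Together with Theorem \ref{poly4:semigroup} this yields decay of the type $\|e^{tL}(I-P)v\|_{H^1_k}\le K(1+t)^{-\frac{m^*-2}{2}}\|v\|_{H^1_{k+m+\mu}}$, with one extra order of $t$ when an $x$-derivative is first extracted from the argument, plus the corresponding time-integrated bounds. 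One then represents the solution as $w(t)=e^{tL}(I-P)w(0)+\int_0^t e^{(t-s)L}(I-P)\big(\dot\tau(s)v_{\star,x}+\mathcal{R}(\tau(s),w(s))\big)\,ds$, the $P$-component being absorbed into the definition of $\tau$, together with the analogous integral formula $\dot\tau(s)=\langle\psi,\cdot\rangle$.

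\textbf{Step 3 (nonlinear closure and asymptotic phase).} Feed the remainder bounds of Lemma \ref{poly:estimates}, essentially $\|\mathcal{R}(\tau,w)\|_{H^1_{k+m+\mu}}\lesssim\|w\|_{H^1_k}^2$ for $w$ small, into the convolution inequality for $\Phi(t):=\sup_{0\le s\le t}(1+s)^{\frac{m^*-2}{2}}\|w(s)\|_{H^1_k}$ and the corresponding quantity for $\dot\tau$, and close the bootstrap via the algebraic Gronwall Lemma \ref{poly:Gronwall}; for $\varepsilon_0$ small this gives the first estimate in \eqref{decayestimates}. Since then $|\dot\tau(s)|$ is integrable in $s$ — this is exactly where $m>\tfrac{9}{2}$, i.e. $m^*>4$, is used so that a positive leftover rate survives — the asymptotic phase $\tau_\infty:=\tau(0)+\int_0^\infty\dot\tau(s)\,ds$ exists with $|\tau_\infty|\le C_\infty\|u_0\|_{H^1_{k+m+\mu}}$, and $|\tau(t)-\tau_\infty|\le\int_t^\infty|\dot\tau(s)|\,ds\lesssim(1+t)^{-\frac{m^*-4}{2}}\|u_0\|_{H^1_{k+m+\mu}}$, which is the second estimate; this is Theorem \ref{poly:Stabilitywt}.

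\textbf{Step 4 (existence, regularity, and the main difficulty).} Local existence and uniqueness of $u$ in $H^2_k$ and the stated regularity $w\in C((0,\infty),H^2_k)\cap C^1([0,\infty),L^2_k)$ follow from standard analytic-semigroup theory for $L$ on $L^2_k$ — conjugating by $\eta^k$ only shifts the essential spectrum and preserves sectoriality — and the a priori bounds of Step 3 rule out blow-up, so the solution is global. The main obstacle is the interplay in Steps 2 and 3: the semigroup decays only algebraically, and although $\mathcal{R}$ is quadratic it is not uniformly small in the heavy norm $H^1_{k+m+\mu}$, so the exponents must be tracked so that the extra weights are converted into precisely the temporal decay dictated by the quadratic contact of $\sigma_{\mathrm{ess}}(L)$ with the imaginary axis, while the feedback term $\dot\tau\,v_{\star,x}$ does not degrade the rate (it cannot, since $v_{\star,x}$ is exponentially localized and hence lies in every weighted space with the best decay factor). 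Making the convolution and Gronwall bookkeeping consistent with the threshold $m>\tfrac{9}{2}$ is the technical heart of the proof.
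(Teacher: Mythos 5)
Your outline captures the right architecture — split off the translational mode, use algebraically weighted semigroup decay, close with a Gronwall argument, integrate $\dot\tau$ to get the asymptotic phase — and the threshold discussion ($m^\ast > 4$, hence $m > \tfrac{9}{2}$) is exactly the paper's Remark~\ref{remark:exponent}. But there is one conceptual error in Step~1 that the paper's proof is specifically built to avoid, and it is not merely cosmetic.

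You take $\psi$ to be a ``fixed, exponentially localized functional biorthogonal to $v_{\star,x}$''. These two requirements are incompatible here. For the projection $P$ onto $\mathrm{span}(v_{\star,x})$ to be a spectral projection of $L$ (so that $L$ maps $\mathrm{ran}(I-P)$ into itself and the variation-of-constants formula for $w$ lives entirely in the complementary subspace), $\psi$ must be the adjoint kernel function, $L^{\ast}\psi_2=0$ with $(\psi_2,v_{\star,x})_{L^2}=1$; this is exactly what the paper uses (eq.~\eqref{eq3:adeq}), and $P_kL(I-P_k)=0$ then follows from $L^{\ast}\psi_2=0$. Because $\sigma_{\mathrm{ess}}(L^{\ast})$ also touches the imaginary axis quadratically at the origin (it is the reflection of $\sigma_{\mathrm{ess}}(L)$), the bounded solution of $L^{\ast}\psi_2=0$ is \emph{not} exponentially localized: the paper proves it is only bounded on $\R_+$ while decaying exponentially on $\R_-$, see the estimate \eqref{eq3:psiest}. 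This is the reason the projector $P_k$ is only bounded for $k\ge 1$ (the pairing $(\psi_2,r)_{L^2}=(\eta^{-k}\psi_2,\eta^k r)_{L^2}$ requires $\eta^{-k}\psi_2\in L^2$), and it is why the crucial resolvent decomposition \eqref{Yestnew2}--\eqref{Yestnew} treats $Q_kR$ and $(I-Q_k)R$ separately with different algebraic weights. If, alternatively, you insist on an exponentially localized $\psi$ that is \emph{not} the adjoint eigenfunction, then $P$ is no longer spectral, $PL(I-P)\neq 0$, and the $w$-equation picks up a lower-order coupling $PLw$ that is \emph{not} quadratic; the clean Gronwall closure in Step~3 then no longer goes through without extra work. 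So the localization of $\psi$ is not a technical convenience you are free to assume — it is precisely the thing that fails, and the resolvent/semigroup machinery of Sections~3 and~4 is designed around that failure.

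Two smaller differences from the paper's actual route, which are worth flagging but are not gaps: (i) you impose a $\tau$-\emph{dependent} (shifted) phase condition $\langle\psi(\cdot-\tau(t)),w(t)\rangle=0$, whereas the paper uses the \emph{fixed} decomposition $w(t)\in X_k=(I-P_k)L^2_k$ (Lemma~\ref{poly:lemmatrafo}); the fixed version keeps $L$ block-triangular and avoids an extra commutator, which simplifies the bookkeeping. (ii) In Step~4 you say conjugation by $\eta^k$ ``shifts the essential spectrum''; in fact algebraic weights leave the essential spectrum invariant (only exponential weights move it), which is why the touching of the imaginary axis at the origin persists in every $L^2_k$ and the decay is algebraic rather than exponential — your subsequent claim (preservation of sectoriality) is correct, but the mechanism you cite is not.
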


In  Remark \ref{remark:exponent} we  explain in detail how the value of the exponent $m^*$ arises from  the interaction of linear and nonlinear terms.

\sect{Spectral analysis of the linearized operator and resolvent estimates}
\label{sec3}
In this section we discuss the spectral properties of the linearized operator $L$ from \eqref{LatL} considered as an operator on the algebraically weighted space $L^2_k$ for some $k \ge 0$. We will derive sharp estimates of the solution to the resolvent equation
\begin{align} \label{poly:resolvGl}
	(sI - L)v = r, \quad s \in \C,
\end{align}
where $r \in L^2_{\ell}$ with suitable $\ell \ge k$ and where $s$ lies in various subdomains of $\C$.
\subsection{Resolvent estimates for large $s$-values and the dispersion set} \label{sec3:0}
We begin with collecting basic properties of $L$ proved by standard estimates.

\begin{lemma} \label{poly:aprioriest}
Let the Assumptions \ref{A1}-\ref{A2} be satisfied and let $k \ge 0$. Then $L$ is a closed and densely defined linear operator on $L^2_k$ with $\D(L) = H^2_k$, i.e., $L \in \clos[L^2_k]$. Furthermore, there are constants $\varepsilon_0 , R_0, C > 0$ such that for all $s$ in 
\begin{align*}
  \Omega_0 := \Big\{ s \in \C: |s| \ge R_0, \, |\arg(s)| \le \frac{\pi}{2} + \varepsilon_0 \Big\}
\end{align*}
 and for every solution  $v \in H^2_k$ of \eqref{poly:resolvGl} with $r \in L^2_k$  the following estimate holds
\begin{align} \label{poly:largeresolest1}
	|s|^2 \| v \|_{L^2_k}^2 + |s| \| v_x \|_{L_k^2}^2 + \|v_{xx}\|_{L_k^2}^2 \le C \| r \|_{L^2_k}^2.
\end{align}
\end{lemma}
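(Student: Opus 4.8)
The plan is to peel off the weight by conjugation, reducing \eqref{poly:largeresolest1} to a constant–coefficient estimate on the unweighted space $L^2$, to treat the principal part $A\partial_{xx}$ by the Fourier transform, and to absorb the lower–order terms once $|s|$ is large. Write $L=A\partial_{xx}+B$ with $B=c\partial_x+S_\omega+Df(v_\star)$; since $v_\star\in C^2_b$ the coefficients of $B$ are bounded on $\R$, so $B$ is $A\partial_{xx}$–bounded with relative bound $0$ (using the interpolation inequality $\|u_x\|_{L^2_k}\le\varepsilon\|u_{xx}\|_{L^2_k}+C_\varepsilon\|u\|_{L^2_k}$, which follows directly in $L^2_k$ from integration by parts and $\eta^{-1}|\eta'|\le\tfrac12$). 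As $A$ is invertible, $A\partial_{xx}$ is closed on $L^2_k$ with domain $H^2_k$, $C^\infty_c$ is dense, and standard perturbation theory gives $L\in\clos[L^2_k]$ with $\D(L)=H^2_k$. For the estimate, set $w:=\eta^k v$: because $\eta'/\eta$ and $\eta''/\eta$ are bounded, $\eta^kL\eta^{-k}$ is again of the form $A\partial_{xx}+\widetilde B$ with a first–order operator $\widetilde B$ having bounded matrix coefficients, and for $|s|\ge R_0\ge1$ the three norms in \eqref{poly:largeresolest1} are comparable (in both directions) to $|s|^2\|w\|_{L^2}^2+|s|\|w_x\|_{L^2}^2+\|w_{xx}\|_{L^2}^2$. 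It therefore suffices to prove, for $s\in\Omega_0$ and $w\in H^2$ solving $(sI-A\partial_{xx}-\widetilde B)w=\widetilde r$, the bound $|s|^2\|w\|_{L^2}^2+|s|\|w_x\|_{L^2}^2+\|w_{xx}\|_{L^2}^2\le C\|\widetilde r\|_{L^2}^2$.

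The crucial structural fact is that, by \eqref{SystemDef}, $A=|\alpha|R_\phi$ with $|\alpha|=(\alpha_1^2+\alpha_2^2)^{1/2}$ and $\phi=\arg(\alpha_1+i\alpha_2)$, where $\alpha_1>0$ forces $|\phi|<\tfrac{\pi}{2}$; in particular $A$ is normal with eigenvalues $\alpha_1\pm i\alpha_2$ lying strictly inside the right half–plane. Hence for $\xi\in\R$ the matrix $sI+\xi^2A$ is normal with eigenvalues $s+\xi^2|\alpha|e^{\pm i\phi}$. Fixing $\varepsilon_0\in(0,\tfrac{\pi}{2}-|\phi|)$ and setting $\delta:=\tfrac{\pi}{2}-|\phi|-\varepsilon_0>0$, for $|\arg s|\le\tfrac{\pi}{2}+\varepsilon_0$ the angle between $s$ and $\xi^2|\alpha|e^{\pm i\phi}$ is at most $\pi-\delta$, which yields $|s+\xi^2|\alpha|e^{\pm i\phi}|\ge c_1(|s|+\xi^2)$ and therefore $\|(sI+\xi^2A)^{-1}\|\le c_0(|s|+\xi^2)^{-1}$, uniformly in $\xi$. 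Taking the Fourier transform in $(sI-A\partial_{xx})w=\widetilde r+\widetilde B w$, so that $\widehat w(\xi)=(sI+\xi^2A)^{-1}\,\widehat{\widetilde r+\widetilde B w}(\xi)$, and combining Plancherel with the elementary inequalities $|s|\le|s|+\xi^2$, $|s|\xi^2\le\tfrac14(|s|+\xi^2)^2$ and $\xi^4\le(|s|+\xi^2)^2$, we obtain $|s|^2\|w\|^2+|s|\|w_x\|^2+\|w_{xx}\|^2\le C\bigl(\|\widetilde r\|^2+\|\widetilde B w\|^2\bigr)$.

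To close the argument, observe $\|\widetilde B w\|\le C(\|w_x\|+\|w\|)$, and for $|s|\ge R_0$ one has $\|w_x\|^2\le R_0^{-1}|s|\|w_x\|^2$ and $\|w\|^2\le R_0^{-2}|s|^2\|w\|^2$; choosing $R_0$ large enough the term $C\|\widetilde B w\|^2$ is absorbed into the left–hand side, leaving $|s|^2\|w\|^2+|s|\|w_x\|^2+\|w_{xx}\|^2\le C\|\widetilde r\|^2$, and undoing the conjugation recovers \eqref{poly:largeresolest1}. The one step that demands care rather than routine bookkeeping is the uniform symbol bound on the sector of half–angle $\tfrac{\pi}{2}+\varepsilon_0$: a plain accretivity (energy) estimate only reaches half–angle $\tfrac{\pi}{2}$, and to cross it one genuinely needs $\sigma(A)$ in the open right half–plane, i.e.\ Assumption \ref{A1}, together with the explicit smallness $\varepsilon_0<\tfrac{\pi}{2}-|\arg(\alpha_1+i\alpha_2)|$; everything else is standard parabolic estimation.
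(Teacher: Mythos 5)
Your proof is correct, but it takes a genuinely different route from the paper's. The paper proves \eqref{poly:largeresolest1} by an energy estimate (testing the resolvent equation against $\eta^{2k}\bar v$, integrating by parts, and, after multiplying by a suitable phase $e^{-i\psi}$ with $|\psi|<\tfrac{\pi}{2}-|\arg\alpha|$, taking real parts to make both the $|s|\|v\|_{L^2_k}^2$ term and the $\|v_x\|_{L^2_k}^2$ term simultaneously coercive), following \cite[Lemma~3.1]{BeynDoeding22}; closedness is then read off from the a priori bound. You instead conjugate away the weight, split $L=A\partial_{xx}+B$, establish the constant--coefficient symbol bound $\|(sI+\xi^2A)^{-1}\|\le c_0(|s|+\xi^2)^{-1}$ via the normality of $A$ and the sector condition $\varepsilon_0<\tfrac{\pi}{2}-|\arg(\alpha_1+i\alpha_2)|$, and then absorb the lower-order terms for large $|s|$. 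Both arguments hinge on exactly the same structural fact (spectrum of $A$ strictly in the open right half-plane), and both need the same restriction on $\varepsilon_0$. Your Fourier-multiplier route makes the admissible sector geometrically transparent at the level of the symbol and handles the lower-order and variable-coefficient terms by a single relative-boundedness absorption, at the cost of the conjugation bookkeeping; the paper's energy route stays on the physical side and never leaves $L^2_k$, but needs the phase-rotation trick to exceed half-angle $\tfrac{\pi}{2}$, which is precisely the step you flag as the one demanding care. One small remark: your statement that ``a plain accretivity estimate only reaches half-angle $\tfrac{\pi}{2}$'' is accurate, but the paper's energy estimate is not ``plain'' in this sense --- the rotation by $e^{-i\psi}$ is what lets it cross; so the two methods are on equal footing regarding the needed hypothesis on $A$.
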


\begin{proof}
The estimate \eqref{poly:largeresolest1} is a standard energy estimate and can be shown with a slight modification of the proof of \cite[Lemma 3.1]{BeynDoeding22}. Details can be found in \cite[Ch.5.3]{Doeding}. The closedness of $L$ is then concluded from \eqref{poly:largeresolest1}. 
\end{proof}

\begin{figure}[h!] 
\centering
\begin{minipage}[t]{0.45\textwidth}
\centering
\includegraphics[scale=0.35]{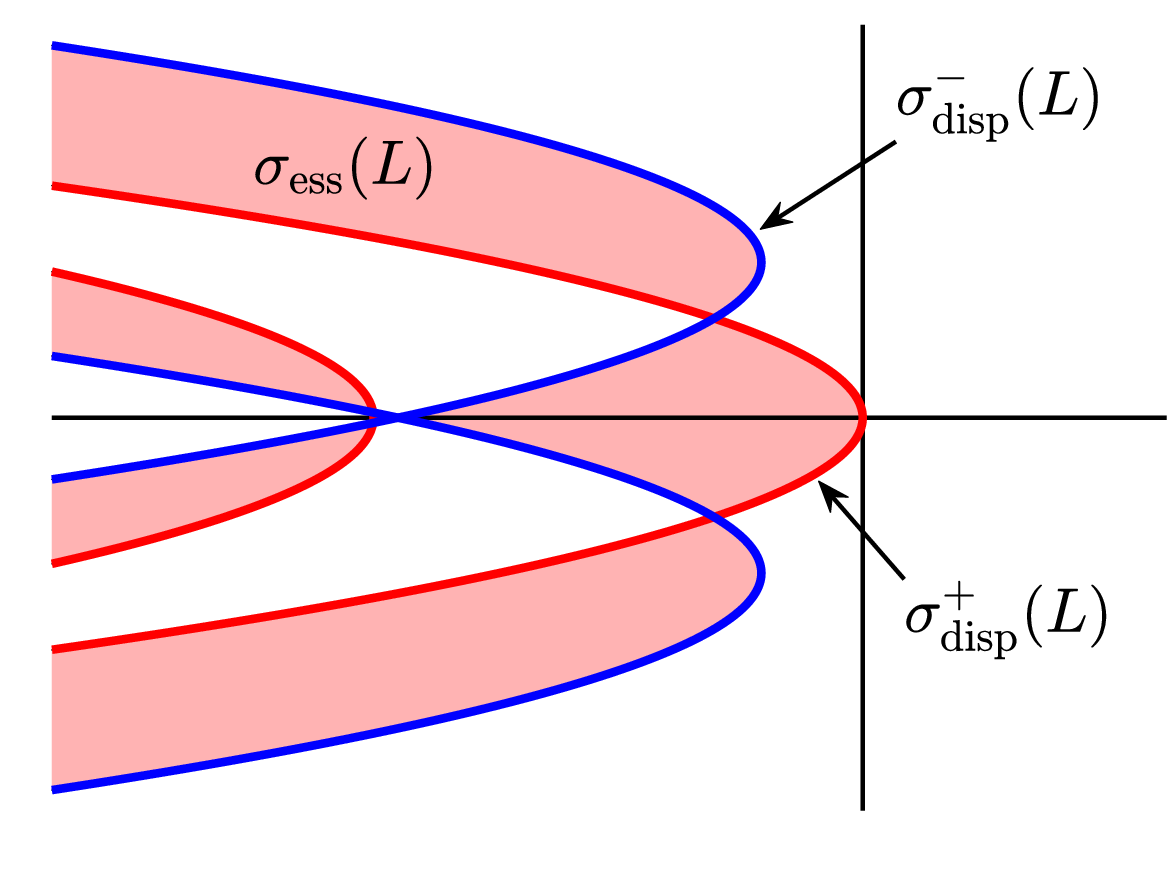}
\end{minipage}
\begin{minipage}[t]{0.45\textwidth}
\centering
\includegraphics[scale=0.35]{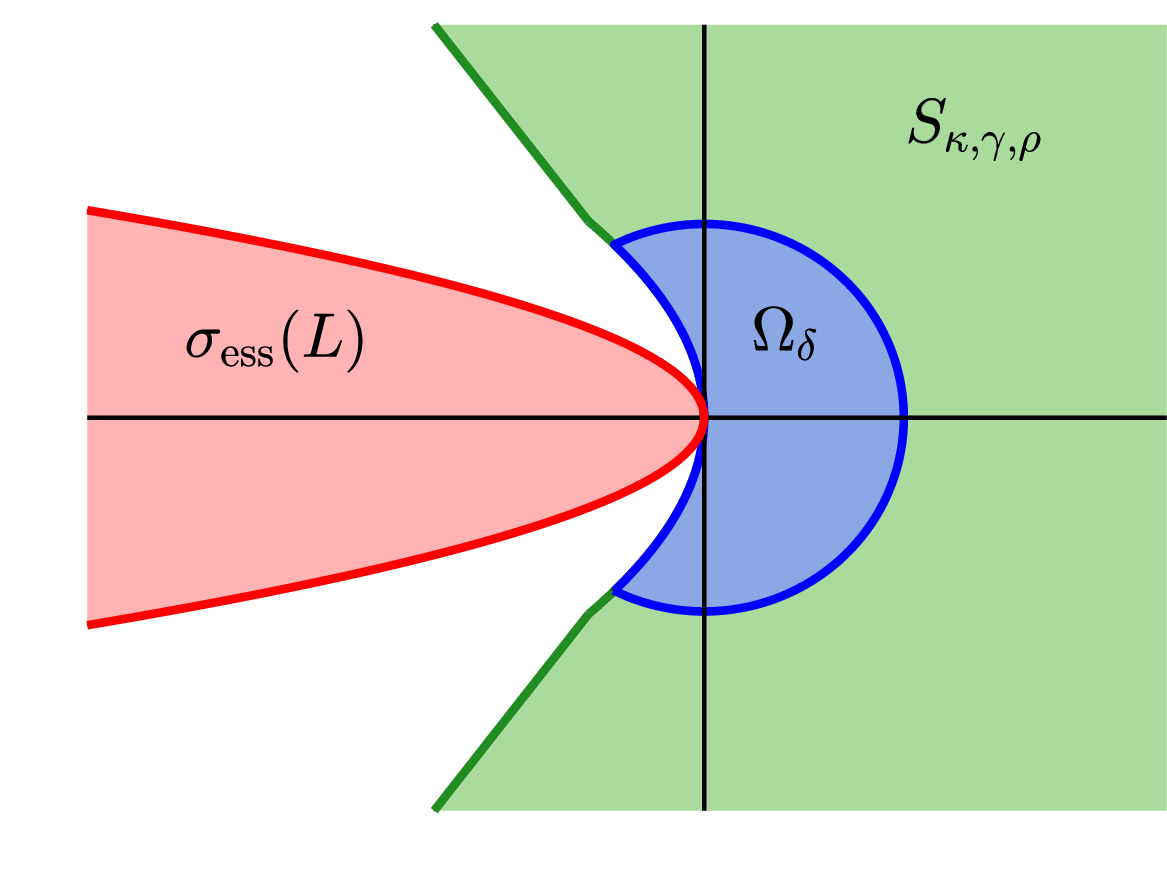}
\end{minipage}
\caption{Left: essential spectrum $\sigma_{\ess}(L)$ of the operator $L$ and the dispersion curves $\sigma^+_{\disp}(L)$ (red) and $\sigma^-_{\disp}(L)$ (blue). Right: rounded sector $S_{\kappa,\gamma,\rho}$ from Theorem \ref{Thm:essential} (green) and the $\delta$-crescent $\Omega_\delta := S_{\kappa,\gamma,\rho} \cap B_\delta(0)$ (blue).
} \label{essspec}
\end{figure}

As a next step we characterize the essential spectrum of $L \in \clos[L^2_k]$. In \cite{BeynDoeding22} the essential spectrum of $L$ was determined for $k = 0$ in exponentially weighted $L^2$-spaces. In general, there is a well developed spectral theory for second order differential operators on the real line in the context of traveling waves; we quote \cite{KapitulaPromislow} and \cite{Sandstede02} as general references. It is well known that the Fredholm properties of $L$ and therefore its essential spectrum is invariant under algebraic weights of the underlying $L^2$-space. In particular, the essential spectrum of $L \in \clos[L^2_k]$ coincides with the essential spectrum of $L \in \clos[L^2]$. This is a consequence of the invariance of the Fredholm index under compact perturbation \cite[Chapter IX]{EdmundsEvans} and can be shown along the lines of \cite[Lemma 3.2]{BeynDoeding22}. Another compact perturbation argument shows that the operator $sI - L \in \clos[L^2]$ is Fredholm for $s$ in the set
\begin{equation} \label{OmegaF}
\begin{aligned}
	& \Omega_F := \{ s \in \C: M_-(s) \text{ and } M_+(s) \text{ are hyperbolic} \}, \\
	&  M_\pm(s) = \begin{pmatrix}
	0 & I \\ A^{-1}(sI - C_\pm) & -cA^{-1}
\end{pmatrix}, \\
	& C_- = S_\omega + Df(0), \quad C_+ =  S_\omega + Df(v_{\infty})
\end{aligned}
\end{equation}
with Fredholm index $\mathrm{ind}(sI - L) = m_{\st}^+(s) - m_{\st}^-(s)$. The matrices $M_{\pm}(s)$ refer to
  the limits at $\pm \infty$ of the matrix in the corresponding first order system; see \eqref{firstvar} below. Hyperbolicity means that there are no eigenvalues on the
imaginary axis and $m_{\st}^\pm(s)$ denote the dimension of the stable subspaces of $M_\pm(s)$ for $s \in \Omega_F$ (see e.g. \cite[Lemma 3.1.10]{KapitulaPromislow}). Analyzing  the eigenvalue problem for $M_\pm(s)$ then yields $\Omega_F = \C \setminus \sigma_{\disp}(L)$ with the dispersion set
\begin{align*}
	\sigma_{\disp}(L)& := \sigma^+_{\disp}(L) \cup \sigma^-_{\disp}(L), \\
	\sigma^\pm_{\disp}(L)& := \{ s \in \C : \det (sI - D_\pm(\nu)) = 0 \text{ for some } \nu \in \R \}, \\
	D_\pm(\nu) & = -\nu^2 A + i \nu c I + C_\pm. 
\end{align*}
With these results in mind, the following theorem is a consequence of \cite[Theorem 3.5]{BeynDoeding22}.

\begin{theorem} \label{Thm:essential}
Let Assumptions \ref{A1}-\ref{A3} be satisfied. Then there are constants $\kappa, \gamma, \rho > 0$ such that the open set $\Omega_F$ has a unique connected component $\Omega_\infty$ satisfying
\begin{align*}
	S_{\kappa,\gamma,\rho} := \{ s \in \C  \backslash \{ 0 \} : \Re s \ge -\kappa \min(|\Im s|, \rho)^2 + \gamma \min (\rho - |\Im s|, 0) \} \subset \Omega_\infty.
\end{align*}
Further, for $s \in \Omega_\infty$ and $k \ge 0$ the operator $sI - L \in \clos[L^2_k]$ is Fredholm of index $0$, in particular, $\sigma_{\ess}(L) \subset \C \setminus \Omega_\infty$ holds.
\end{theorem}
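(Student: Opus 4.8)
The plan is to deduce the statement from two facts already available: the identity $\Omega_F = \C \setminus \sigma_{\disp}(L)$ together with the index formula $\mathrm{ind}(sI-L) = m_{\st}^+(s) - m_{\st}^-(s)$ on $\Omega_F$, and the weight-invariance of the Fredholm property and index of $sI - L$ recalled above. By the latter it suffices to argue for $L \in \clos[L^2]$, the $L^2_k$-statement then following verbatim. The work then splits into (i) locating $\sigma_{\disp}(L)$, (ii) choosing $\kappa,\gamma,\rho>0$ so that the rounded sector $S_{\kappa,\gamma,\rho}$ is disjoint from $\sigma_{\disp}(L)$ and hence lies in a single connected component $\Omega_\infty$ of $\Omega_F$, and (iii) computing the index on $\Omega_\infty$.

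For (i) I would compute the asymptotic matrices explicitly. Since $Df(0) = g(0)$, a direct computation shows that both eigenvalues of $D_-(\nu) = -\nu^2 A + i\nu c I + C_-$, $C_- = S_\omega + g(0)$, have real part $-\alpha_1\nu^2 + g_1(0) \le g_1(0) < 0$ by Assumption \ref{A1}, so $\sigma^-_{\disp}(L) \subset \{\Re s \le g_1(0)\}$. Using $g(|v_\infty|^2) = -S_\omega$ one finds $C_+ = S_\omega + Df(v_\infty) = 2|v_\infty|^2 g'(|v_\infty|^2)\, e_1 e_1^\top$, a singular matrix with eigenvalues $0$ and $2|v_\infty|^2 g_1'(|v_\infty|^2) < 0$ (Assumption \ref{A2}). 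The branch of $\sigma^+_{\disp}(L)$ issuing from the negative eigenvalue is bounded away from the imaginary axis for $|\nu|$ bounded, while for $|\nu| \to \infty$ the term $-\nu^2 A$ (with $A$ having eigenvalues of real part $\alpha_1>0$) forces $\Re s \to -\infty$ along every branch, with $\Re s \lesssim -\gamma_0|\Im s|$ outside a compact set for some $\gamma_0>0$. The crucial object is the branch $\lambda_+(\nu)$ with $\lambda_+(0)=0$: second-order perturbation theory for the simple eigenvalue $0$ of $C_+$, using the right null vector $e_2$ and a left null vector proportional to $(g_2'(|v_\infty|^2),-g_1'(|v_\infty|^2))$, gives
\[
  \lambda_+(\nu) \;=\; i\,c\,\nu \;-\; \frac{\alpha_1 g_1'(|v_\infty|^2) + \alpha_2 g_2'(|v_\infty|^2)}{g_1'(|v_\infty|^2)}\,\nu^2 \;+\; O(\nu^3),
\]
and by Assumptions \ref{A2}--\ref{A3} the coefficient of $\nu^2$ is a strictly negative real number. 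Hence $\Re \lambda_+(\nu) \le -\kappa_0|\Im \lambda_+(\nu)|^2$ for $|\nu|$ small and some $\kappa_0>0$, i.e.\ $\sigma_{\disp}(L)$ touches the imaginary axis at the origin with quadratic contact.

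For (ii) one then picks $\rho$ small, $0<\kappa<\kappa_0$ and $0<\gamma<\gamma_0$; with these choices the boundary of $S_{\kappa,\gamma,\rho}$ (parabolic near $0$, two lines of small slope for $|\Im s|>\rho$) lies strictly to the right of $\sigma_{\disp}(L)$, the intermediate wave numbers being controlled by compactness of the dispersion curves on bounded $\nu$-intervals. This is exactly the content of \cite[Theorem 3.5]{BeynDoeding22}, whose hypotheses involve only the asymptotic matrices $C_\pm$, which coincide with the present ones. Being connected and disjoint from $\sigma_{\disp}(L)$, the set $S_{\kappa,\gamma,\rho}$ sits in exactly one component $\Omega_\infty$ of $\Omega_F$, necessarily the unbounded one, since $\{s\in\C\setminus\{0\}:\Re s\ge 0\}\subset S_{\kappa,\gamma,\rho}$. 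For (iii), the index $m_{\st}^+(s)-m_{\st}^-(s)$ is locally constant, hence constant on the connected set $\Omega_\infty$, and can be evaluated at $s=\sigma$ real with $\sigma\to+\infty$: the eigenvalues $\mu$ of $M_\pm(\sigma)$ are the roots of $\det(\mu^2 A + c\mu I + C_\pm - \sigma I)=0$, so $\mu^2 \approx \sigma A^{-1}$ as $\sigma\to+\infty$; since the eigenvalues of $A^{-1}$ have positive real part, exactly two roots have positive and two have negative real part, whence $m_{\st}^+(\sigma)=m_{\st}^-(\sigma)=2$ and $\mathrm{ind}(sI-L)=0$ throughout $\Omega_\infty$. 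Therefore $sI-L$ is Fredholm of index $0$ for every $s\in\Omega_\infty$, so $s\notin\sigma_{\ess}(L)$ there, i.e.\ $\sigma_{\ess}(L)\subset\C\setminus\Omega_\infty$; and the passage to $L^2_k$ costs nothing by the weight-invariance of the Fredholm index.

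The hard part is step (ii): matching the local parabolic shape of $\lambda_+(\nu)$ near the origin with the global, essentially linear-in-$|\Im s|$, behaviour of all branches of $\sigma_{\disp}(L)$, so that one single rounded sector $S_{\kappa,\gamma,\rho}$ fits to the right of the whole dispersion set; the three constants must be chosen simultaneously, and one must rule out that a stray piece of $\sigma_{\disp}(L)$, for instance the second branch of $\sigma^+_{\disp}(L)$ at intermediate wave numbers, enters the sector. This geometric comparison is precisely what is carried out in \cite[Theorem 3.5]{BeynDoeding22}; once it is in place, the index computation of (iii) and the reduction to the unweighted space are routine.
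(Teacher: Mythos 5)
Your proposal is correct and follows essentially the same route as the paper: the paper's proof of this theorem is a direct citation of \cite[Theorem 3.5]{BeynDoeding22} together with the preceding discussion (weight invariance of the Fredholm property, $\Omega_F=\C\setminus\sigma_{\disp}(L)$, and the index formula), which is exactly the frame you use. The extra material you supply --- the explicit computation of $C_\pm$, the second-order expansion of $\lambda_+(\nu)$ showing how Assumption \ref{A3} forces a strictly negative $\nu^2$-coefficient, and the large-$s$ index count $m_{\st}^\pm=2$ --- is consistent with and simply fills in the details that the paper leaves to the cited reference and to Lemma \ref{dichomatrices}.
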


\begin{proof}
  The claim follows from \cite[Theorem 3.5]{BeynDoeding22} and the arguments above. Let us emphasize that 
  Assumption \ref{A3} is the key condition which guarantees that such a rounded sector $S_{\kappa,\gamma,\rho}$
  pointing into the negative half-plane exists; see Figure \ref{essspec}.
\end{proof}

In Figure \ref{essspec} we show  the essential spectrum and the dispersion set for the QCGL with parameters $\alpha = \tfrac{1}{2}$ and $\beta_0 = -\tfrac{1}{4} + \tfrac{i}{2}$, $\beta_2 = 1 + i$, $\beta_4 = -1 + i$ in \eqref{QCGL}. In general, the dispersion set $\sigma_{\disp}(L)$ consists of four curves in $\C$ which lie in the complement of the rounded sector $S_{\kappa,\gamma,\rho}$ pointed at zero. One of the curves in $\sigma^+_{\disp}(L)$ is critical; it touches the
imaginary axis at the origin in a quadratic fashion so that $\sigma_{\ess}(L) \cap \overline{S_{\kappa,\gamma,\rho}} = \{ 0 \}$. As we will see later, this is due to the fact that one of the (spatial) eigenvalues of $M_+(s)$ (denoted by $\lambda(s)$ later in this section) crosses the imaginary axis when $s \in \C$ crosses the critical curve from right to left. Thus, the appearance  of eigenfunctions in the non-empty kernel $\ker(L)$ as well as the essential spectrum touching the imaginary axis prevent uniform bounds of the resolvent to hold for $s$ near the origin and for functions  $v$ in $H^1_k$ which solve  \eqref{poly:resolvGl}  with
$r \in L^2_k$. But, as we will show, restricting the class of right hand sides $r$ to a codimension $1$ subspace of $L^2_{k+1+\mu}$ for some $\mu > 0$ allows for such uniform bounds and sharp resolvent estimates close to the origin, but to the right of the dispersion set. 

\subsection{Spectral analysis via dichotomies and trichotomies}

In this subsection we characterize dichotomies and trichotomies of the first order system for $z = (v,v_x)^\top$ equivalent to \eqref{poly:resolvGl} and given by
\begin{equation} \label{firstvar}
\begin{aligned} 
  \M(s)z&=(\partial_x - M(s,\cdot))z = R, \\
  M(s,x) &= \begin{pmatrix}
	0 & I_2 \\ A^{-1}(sI-S_\omega - Df(v_\star(x))) & -cA^{-1}
	\end{pmatrix}, \quad R = \begin{pmatrix} 0 \\- A^{-1}r \end{pmatrix}.
\end{aligned}
\end{equation}
By $\S(s,\cdot,\cdot)$ we denote the solution operator of \eqref{firstvar} which is analytic in $s$, i.e., the function
$z(x) := \S(s,x,y) \xi$, $x\in \R$ solves the initial value problem $\M(s)z = 0$, $z(y) = \xi$ for $y \in \R$, $\xi\in \C^2$. The notion of dichotomies and trichotomies that we use throughout the paper is recalled in Appendix \ref{appendixA}. These properties will be crucial for the resolvent estimates in the subsequent sections. 

\begin{figure}[h!]
\centering
\begin{minipage}[t]{0.45\textwidth}
\centering
\includegraphics[scale=0.35]{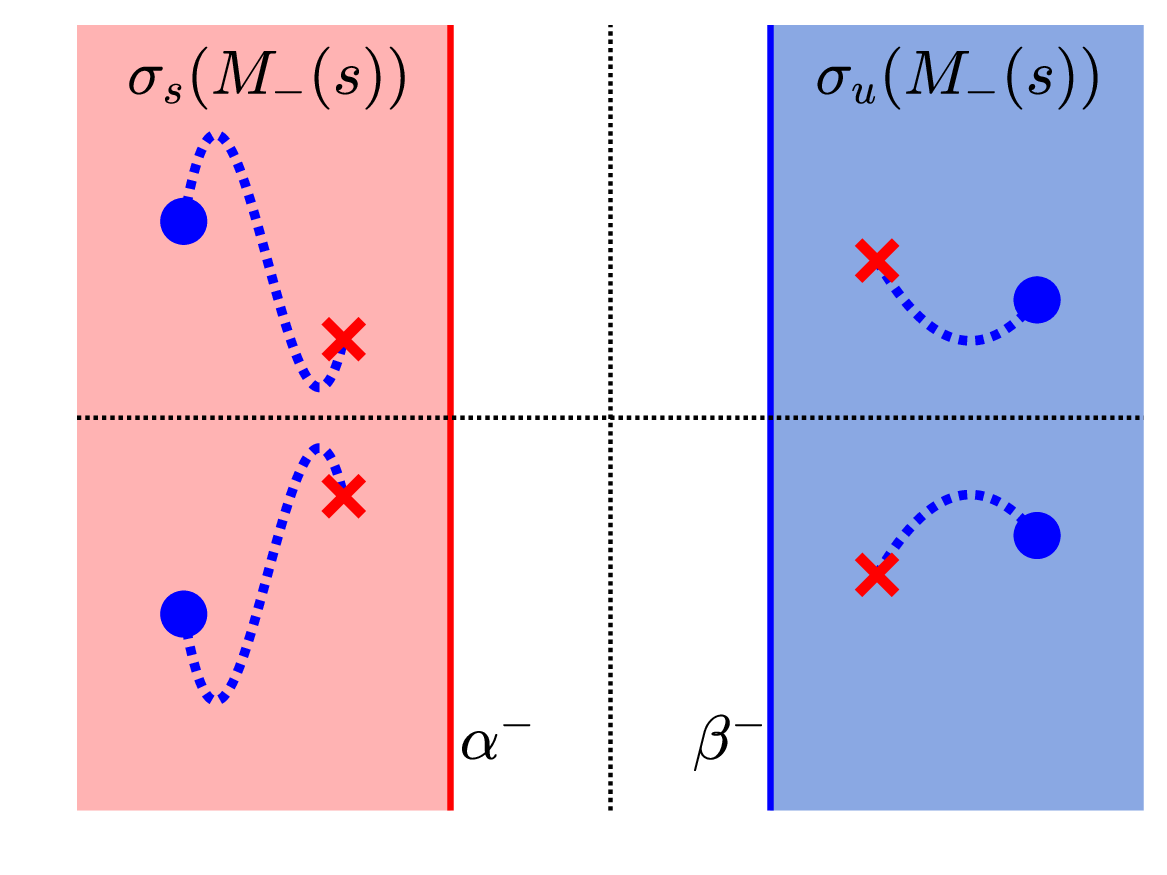}
\end{minipage}
\begin{minipage}[t]{0.45\textwidth}
\centering
\includegraphics[scale=0.35]{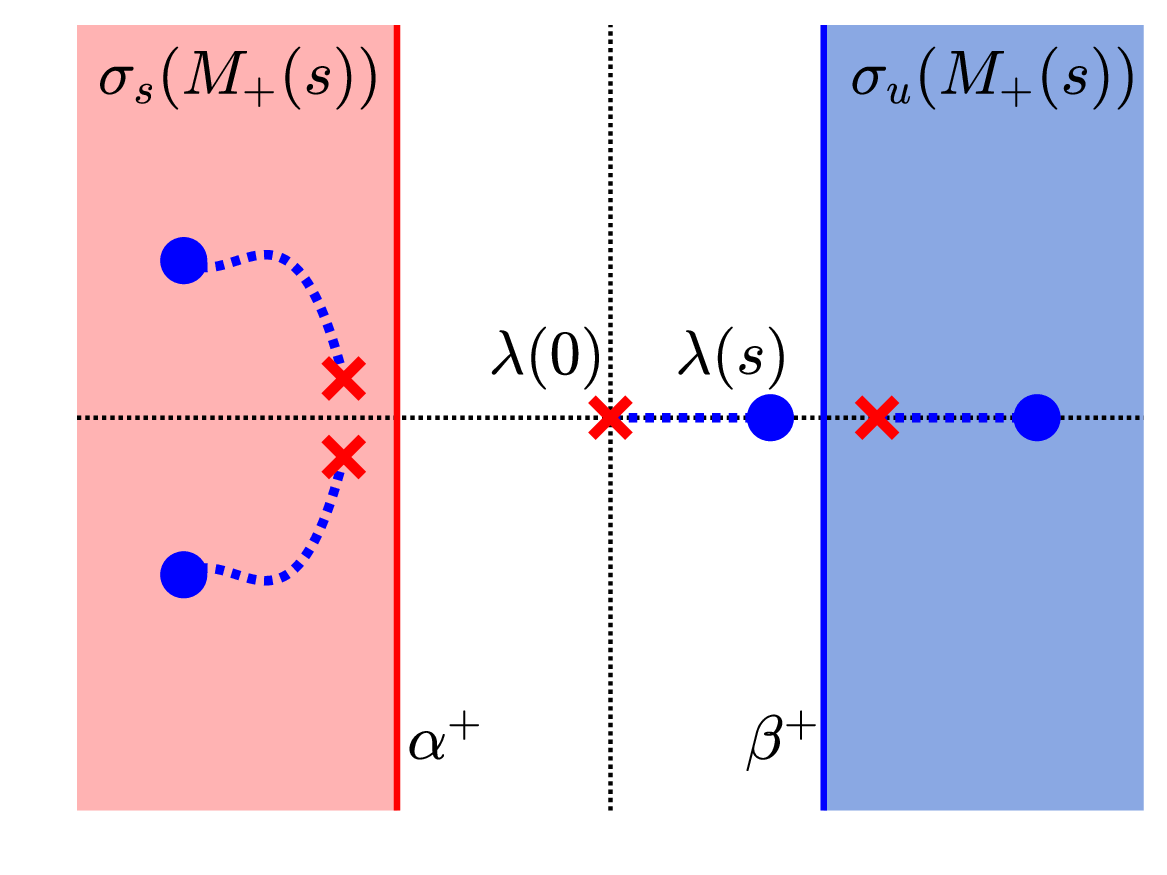}
\end{minipage}
\caption{Stable spectrum $\sigma_{\st}$ and unstable spectrum $\sigma_{\un}$ of $M_-(s)$ (left) and $M_+(s)$ (right) for $s \in (0,\delta) \subset \Omega_\delta$ (blue dots) and $s = 0$ (red crosses).} \label{figure-eigenvalues}
\end{figure}

  The first step is to analyze the eigenvalues of the limiting matrices $M_{\pm}(s)=\lim_{x \to \pm \infty}M(s,x)$ from \eqref{OmegaF}. In particular,
  when $s$ moves from $0$ to some $\delta>0$, the eigenvalues of $M_{\pm}(s)$  move as sketched
  in Figure \ref{figure-eigenvalues}: $M_{-}(s)$ has two stable and two unstable eigenvalues which do not cross
  the imaginary axis while $M_+(s)$ has two stable eigenvalues, an unstable one that stays real positive, and then a critical
  real one $\lambda(s)$ which moves from zero to the right. Since we allow multiple eigenvalues for the groups of two
  it is convenient to work with invariant subspaces and spectral subsets rather than eigenvectors and
  eigenvalues (a  viewpoint stressed in the monograph \cite{StewartSun90} when compared to
  the classical reference \cite[Ch.2]{Kato}). The following lemma 
  deals with  a specific perturbative situation which needs a separate proof.

\begin{lemma} \label{dichomatrices}
Let  Assumption \ref{A1}-\ref{A3} be satisfied. Then there exist $C, K,\delta, \kappa_{\star} > 0$ and $a^\pm < 0 < b^\pm$ such that the following statements hold for $s \in B_\delta(0)$:
\begin{enumerate}
\item[(i)] The matrix $M_-(s)$ is hyperbolic with stable subspace of dimension  $m_\st^-(s)=2$ and unstable subspace of
  dimension $m_\un^-(s) = 2$. The corresponding spectral projectors $\P^-_{\st}(s)$, $\P^-_{\un}(s)$ of rank $2$ are analytic in
  $s$ and satisfy
  \begin{equation} \label{eq3:dichminus}
  \begin{aligned}
    | \exp(xM_-(s))\P^-_{\st}(s)| & \le K e^{a^-x}, \quad x \ge 0, \\
    |\exp(xM_-(s))\P^-_{\un}(s)| & \le K e^{b^- x}, \quad x \le 0.
  \end{aligned}
  \end{equation}
  \item[(ii)] The spectrum of $M_+(s)$ can be decomposed into three subsets with associated spectral projectors
  $\P^+_{\st}(s)$, $\P^+_{\c}(s)$,  $\P^+_{\un}(s)$ which are of rank $m_\st^+(s) = 2$,  $m_\c^+(s) = m_\un^+(s) = 1$, respectively,
    and which depend analytically on $s$.  They satisfy the estimates
    \begin{equation} \label{eq3:dichplus}
   \begin{aligned}
    | \exp(xM_+(s))\P^+_{\st}(s)| & \le K e^{a^+ x}, \quad x \ge 0, \\
    |\exp(xM_+(s))\P^+_{\un}(s)| & \le K e^{b^+x}, \quad x \le 0.
   \end{aligned}
   \end{equation}
   The central projector is of the form $\P^+_{\c}(s) = v(s) w^{H}(s)$ where
   $M_+(s) v(s)=\lambda(s)v(s)$, $w^{H}(s)M_+(s)=\lambda(s) w^{H}(s)$ and $w^{H}(s)v(s)=1$ holds. Further,
   $\P_{\c}^+(s)$  satisfies 
   \begin{equation} \label{eq3:dichcentral}
     | \exp(xM_+(s)) \P^+_{\c}(s)| \le K e^{\Re \lambda(s) x} , \quad x \in \R.
   \end{equation}
      The eigenvalue  $\lambda(s)\in \C$ is simple and analytic in  $s$, and it has the properties
  \begin{enumerate}
\item $\lambda(s)\in \R$ for $s \in (-\delta,\delta)$,
\item$\lambda(0)=0$,  $\lambda'(0)>0$, $\lambda''(0) < 0$,
\item $0<|\lambda(s)|^2 \le C \Re \lambda(s)$ if $\Re s \ge -\kappa_{\star} |\Im s|^2$.
\end{enumerate}
\end{enumerate}
\end{lemma}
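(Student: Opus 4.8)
The plan is to analyze the companion matrices $M_\pm(s)$ explicitly via their characteristic polynomials, treating $s=0$ as the base point and using analytic perturbation theory to extend to a neighborhood $B_\delta(0)$. First I would recall that the eigenvalues $\mu$ of $M_\pm(s)$ are exactly the spatial roots $\nu=\mu$ of $\det(sI-D_\pm(\nu))=0$, i.e. $\det(sI+\nu^2 A - i\nu c I - C_\pm)=0$, a quartic in $\nu$ with coefficients analytic in $s$. At $s=0$, by Assumptions \ref{A1}--\ref{A3} and the identity $g(|v_\infty|^2)=-S_\omega$ (so $C_+ = S_\omega + Df(v_\infty)$ with the explicit Jacobian $Df(v_\infty)$), one computes that $M_-(0)$ is hyperbolic with two eigenvalues in each open half-plane — this is essentially the statement that $0\notin\sigma^-_{\disp}(L)$, which follows from $g_1(0)<0$ — while $M_+(0)$ has a simple zero eigenvalue (because $v_{\star,x}\to 0$ gives a bounded solution at $+\infty$, equivalently $0\in\sigma^+_{\disp}(L)$ through $\nu=0$), two eigenvalues with negative real part, and one with positive real part. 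The sign structure at $+\infty$ comes from the asymptotic linearization: the relevant $2\times2$ block governing the critical mode has trace/determinant controlled by $g_1'(|v_\infty|^2)<0$ and Assumption \ref{A3}, $\alpha_1 g_1'+\alpha_2 g_2'<0$.

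Next I would invoke analytic dependence. Since the simple eigenvalues of $M_+(0)$ (the critical $0$, the unstable one) stay simple for small $|s|$, standard analytic perturbation theory (cf. \cite[Ch.2]{Kato} or \cite{StewartSun90}) gives analytic eigenvalues $\lambda(s)$ and an analytic unstable eigenvalue, with analytic rank-one spectral projectors $\P^+_\c(s)=v(s)w^H(s)$ normalized by $w^H(s)v(s)=1$; the stable pair of $M_+(0)$, though possibly degenerate, forms a spectral set separated from the rest of the spectrum, so its total projector $\P^+_\st(s)$ is analytic by the Riesz-projection (contour integral) formula, and likewise the hyperbolic splitting of $M_-(s)$ persists with analytic rank-two projectors $\P^-_\st(s),\P^-_\un(s)$. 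Shrinking $\delta$ keeps the stable/unstable spectral subsets strictly inside $\{\Re\mu\le a^\pm<0\}$ and $\{\Re\mu\ge b^\pm>0\}$ respectively, which yields the exponential bounds \eqref{eq3:dichminus}, \eqref{eq3:dichplus}, \eqref{eq3:dichcentral} by the usual estimate $|\exp(xN)\P|\le K e^{\theta x}$ for a matrix $N$ whose spectrum on $\ran\P$ lies in $\{\Re\mu\le\theta\}$ (for $x\ge0$; reversed for $x\le0$). The uniformity of $K$ over $B_\delta(0)$ follows from continuity of all data on the compact closure.

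The heart of the lemma is the fine behavior of the critical eigenvalue $\lambda(s)$ near $s=0$, properties (a)--(c). For (a), I would exploit a reality/reflection symmetry of the dispersion relation: for $s\in\R$ the quartic $\det(sI+\nu^2A-i\nu cI-C_+)=0$ has real coefficients after the substitution $\nu\mapsto i\nu$ or via the symmetry $M_+(\bar s)=\overline{M_+(s)}$ combined with the fact that $\lambda(s)$ is the unique eigenvalue near $0$, forcing $\lambda(s)=\overline{\lambda(\bar s)}$, hence $\lambda(s)\in\R$ for $s\in(-\delta,\delta)$. For (b), differentiating the defining relation $\det(sI-D_+(\lambda(s)))=0$ — equivalently using $M_+(s)v(s)=\lambda(s)v(s)$ and the dispersion curve — at $s=0$: the condition $\lambda'(0)=(w^H(0)\partial_s M_+(0)v(0))^{-1}$... wait, more directly, on the dispersion curve $s=s(\nu)$ with $s(\nu)I=D_+(\nu)$ restricted to the critical eigenvalue, one has $s(0)=0$ and $s'(0)$, $s''(0)$ computable from $D_+(\nu)=-\nu^2A+i\nu cI+C_+$; inverting gives $\lambda'(0)>0$ and $\lambda''(0)<0$, where the sign of $\lambda''(0)$ is precisely where Assumption \ref{A3} enters to make the essential spectrum bend into the left half-plane (consistent with Theorem \ref{Thm:essential}). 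For (c), I would write $\lambda(s)=\lambda'(0)s+\tfrac12\lambda''(0)s^2+O(s^3)$ along the relevant branch — but since $\lambda$ depends on $s\in\C$ analytically and is real on the real axis, a Taylor expansion in $s$ with real coefficients $\lambda'(0)>0$, $\lambda''(0)<0$ gives $\Re\lambda(s)=\lambda'(0)\Re s+\tfrac12\lambda''(0)(\Re s)^2 - \tfrac12\lambda''(0)(\Im s)^2 + O(|s|^3)$ and $|\lambda(s)|^2 = \lambda'(0)^2|s|^2 + O(|s|^3)$; on the region $\Re s\ge -\kappa_\star|\Im s|^2$ with $\kappa_\star$ small, the dominant term of $\Re\lambda(s)$ is $\lambda'(0)\Re s - \tfrac12\lambda''(0)(\Im s)^2$, both of whose controlling contributions are $\gtrsim -\kappa_\star|\Im s|^2 + (\text{positive})|\Im s|^2 \gtrsim |s|^2$ after choosing $\kappa_\star < -\lambda''(0)/(2\lambda'(0))$ roughly, yielding $|\lambda(s)|^2\le C\Re\lambda(s)$.

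The main obstacle I anticipate is establishing the precise sign structure of the eigenvalues of $M_+(s)$ at and near $s=0$ — in particular verifying that exactly one eigenvalue is critical (at the origin for $s=0$), that it is \emph{simple}, and nailing the signs of $\lambda'(0)$ and $\lambda''(0)$ from Assumptions \ref{A1}--\ref{A3}. This requires an honest computation with the $4\times4$ companion matrix (or equivalently a careful factorization of the quartic dispersion polynomial of $D_+$), isolating the $2\times2$ sub-block responsible for the slow/critical dynamics and checking that its relevant invariants have the signs dictated by $g_1'(|v_\infty|^2)<0$ and $\alpha_1 g_1' + \alpha_2 g_2' < 0$. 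Everything else — analyticity and uniformity of projectors, the exponential dichotomy/trichotomy estimates — is routine once the spectral picture at $s=0$ is pinned down.
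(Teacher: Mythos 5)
Your proposal has the right architecture — analytic perturbation from $s=0$, Riesz projectors for the stable/unstable blocks, analytic continuation of the simple critical eigenvalue, Taylor expansion of $\lambda(s)$ — and your reality-symmetry argument $\lambda(s)=\overline{\lambda(\bar s)}$ for part (a) is a valid, arguably slicker, alternative to what the paper does (the paper instead restricts its analytic implicit-function argument to $\R^6$ and appeals to uniqueness). Your estimate strategy for (c) — expand $\Re\lambda(s)$, note the coefficient of $(\Im s)^2$ is $-\tfrac12\lambda''(0)>0$, and choose $\kappa_\star$ small relative to $-\lambda''(0)/\lambda'(0)$ — is also the same calculation the paper performs via the parametrization $s=i\tau-a\tau^2$, with your sketch being somewhat more cavalier about the $O(|s|^3)$ error terms and the distinction between $a\le 0$ and $0\le a\le\kappa_\star$ which the paper treats carefully.

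However, there is a genuine gap precisely where you flag "the main obstacle": the signs $\lambda'(0)>0$, $\lambda''(0)<0$ and the full spectral picture of $M_\pm(0)$ (including simplicity of the zero eigenvalue of $M_+(0)$) are \emph{asserted} but never derived, and these are exactly the assumption-dependent parts of the lemma. The paper establishes the base spectral picture via a dedicated block-matrix lemma (Lemma~\ref{appB:block}) exploiting $c>0$ and the lower spectral bounds of $A$ and $-C_\pm$, then computes explicitly: the left eigenvector $w_0=(y^\top, c^{-1}y^\top A)^\top$, $y=(-\sigma_2,\sigma_1)^\top$, pairs with $M_+'(0)v_0$ to give $\lambda'(0)=1/c$; solving the first-variation equation for $v'(0)$ and pairing again gives $\lambda''(0)=2q/c$ with $c^2\sigma_1 q = \alpha_1\sigma_1+\alpha_2\sigma_2>0$ (Assumption~\ref{A3}) and $\sigma_1=2g_1'(|v_\infty|^2)|v_\infty|^2<0$ (Assumption~\ref{A2}), forcing $q<0$, hence $\lambda''(0)<0$. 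Without this, (b) — and then (c), which depends on the signs — is unsupported. A smaller slip: the eigenvalues $\mu$ of $M_\pm(s)$ correspond to roots $\nu$ of $\det(sI-D_\pm(\nu))=0$ via $\mu=i\nu$, not $\nu=\mu$; your reality check under $\nu\mapsto i\nu$ actually relies on getting this identification right.
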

\begin{proof} From \eqref{SystemDef} and \eqref{OmegaF} we have
  \begin{align*}
    Df(u) &= g(|u|^2)+2g'(|u|^2)\begin{pmatrix} u_1^2 & u_1 u_2 \\ u_1 u_2 & u_2^2 \end{pmatrix}, \\
    Df(v_{\infty}) &= g(|v_{\infty}|^2)+2g'(|v_{\infty}|^2)\begin{pmatrix} |v_{\infty}|^2 & 0 \\ 0 & 0 \end{pmatrix},\\
    M_{\pm}(0)& =\begin{pmatrix}  0 & I \\ -A^{-1} C_{\pm} & - c A^{-1} \end{pmatrix},\quad
    C_-  = S_{\omega}+g(0), \\
     C_+& = \begin{pmatrix} \sigma_1 & 0 \\ \sigma_2 & 0 \end{pmatrix}, \text{ where }
    \sigma_j=2 g_j'(|v_{\infty}|^2)|v_{\infty}|^2 \, (j=1,2).
  \end{align*}
  Lemma \ref{appB:block}(i) applies to $M_-(0)$ since $A$ and $-C_-$ have a positive lower spectral bound due to Assumption \ref{A1}.
  Hence $M_-(0)$ is hyperbolic with
  two-dimensional stable and unstable subspace and corresponding projectors $\P^-_{\st}(0)$, $\P^-_{\un}(0)$. Then there is a norm $|\cdot|_0$ in $\R^m$ such that
  \begin{align*}
    |\exp(M_-(0))\P^-_{\st}(0)|_0 < 1, \quad |\exp(-M_-(0))\P^-_{\un}(0)|_0 <1 .
  \end{align*}
  This property persists for $|s|$ small with Riesz projectors analytic in $s$ and  given by
  \begin{equation} \label{eq3:sproj}
  \begin{aligned}
    \P^-_{\st}(s)& =\frac{1}{ 2 \pi i} \int_{\Gamma^-} (zI - M_-(s))^{-1} dz, \quad
    \P^-_{\un}(s) =\frac{1}{ 2 \pi i} \int_{\Gamma^+} (zI - M_-(s))^{-1} dz,
  \end{aligned}
  \end{equation}
  where $\Gamma^{\pm}$ are semicircles with sufficiently large radius $R$ enclosing the stable and unstable subsets
  and parameterized by
  \begin{align*}
    \gamma_{\pm}(\tau) = \begin{cases} \pm R^{-1} \mp i R (2\tau-1), & 0 \le \tau \le 1, \\
      \pm R^{-1}\pm i R e^{ i \pi \tau}, & 1 \le \tau \le 2.
    \end{cases}
     \end{align*}
   In particular, there exist $\delta>0$ and $a^-<0<b^-$ such that we have for $|s|\le \delta$  
\begin{align} \label{eq3:normsmall}
    |\exp(M_-(s))\P^-_{\st}(s)|_0 \le e^{a^-}<1, \quad |\exp(-M_-(s))\P^-_{\un}(s)|_0 \le e^{-b^-}<1  .
  \end{align}
Taking powers $\exp(n M_-(s))$ for $n\in \N$ with this norm and filling the gaps $(n,n+1)$  by continuity then leads to the
estimate  \eqref{eq3:dichminus} using the equivalence of norms $| \cdot |_0$ and $| \cdot |$.

The proof of (ii) is somewhat more involved. By Assumption \ref{A2} we have $\sigma_1<0$ so that $-C_+$ has a simple eigenvalue
$\lambda(0)=0$ with eigenvector $\begin{pmatrix} 0 & 1 \end{pmatrix}^{\top}$ and another simple eigenvalue $-\sigma_1>0$.
Using further $c>0$, Lemma  \ref{appB:block}(ii) ensures that $M_+(0)$ has a two-dimensional stable subspace, a one-dimensional
unstable subspace and a one-dimensional critical subspace which belongs to the zero eigenvalue.
The critical right eigenvector $v_0$ and left eigenvector $w_0$ of $M^+(0)$ are given by
\begin{align}\label{eq3:v0w0}
  v_0= ( 0, 1, 0, 0)^\top,  \quad w_0 = \begin{pmatrix}y \\
    c^{-1}A^{\top}y \end{pmatrix}, \quad
  y = \begin{pmatrix} - \sigma_2 \\ \sigma_1 \end{pmatrix}.
\end{align}
The spectral projectors $\P^+_{\st}(0)$ of rank $2$ and  $\P^+_{\un}(0)$ of rank $1$
can be continued analytically for $|s|\le \delta$ as in
\eqref{eq3:sproj}. Similarly, the rank $1$ projector $\P_{\c}^+(0)= \sigma_1^{-1}v_0 w_0^{\top}$
is continued analytically by
\begin{align*}
  \P^+_{\c}(s)& =\frac{1}{ 2 \pi i} \int_{\partial B_{2\delta}(0)} (zI - M_+(s))^{-1} dz,
  \end{align*}
if $\delta $ and $R>0$ are chosen  such that the curves $\Gamma^{\pm}$ and $\partial B_{2\delta}(0)$ do not overlap.
For the stable and the unstable projection we obtain contraction as in \eqref{eq3:normsmall} and then the exponential estimates
\eqref{eq3:dichplus}. Another method of determining the unique central projector is via the analytic implicit function theorem
applied to the mapping
\begin{align} \label{eq3:Tdef}
  T:\C^4 \times \C \times \C \to  \C^4 \times \C, \quad (v,\lambda,s)\mapsto
  \begin{pmatrix}M_+(s) v - \lambda v \\ w_0^{\top} v - \sigma_1 \end{pmatrix}
\end{align}
for which $T(v_0,0,0)=0$ holds. The derivative w.r.t. $(v,\lambda)$
\begin{align*}
  D_{(v,\lambda)}T(v_0,0,0)= \begin{pmatrix} M_+(0)  & -v_0 \\ w_0^{\top} & 0 \end{pmatrix}
  \end{align*}
is invertible since $0$ is a simple eigenvalue of $M_+(0)$. The local solution $(v(s),\lambda(s))$, $|s|\le \delta$ of $T(v,\lambda,s)=0$
satisfies $(v(0),\lambda(0))=(v_0,0)$ and solves the
eigenvalue problem
\begin{equation} \label{eq3:evs}
  M_+(s)v(s) = \lambda(s)v(s), \quad w_0^{\top}v(s)=\sigma_1.
  \end{equation}
  From a suitable
adjoint system we obtain analytic left eigenvectors $\tilde{w}(s)$ normalized by $v_0^{\top}\tilde{w}(s)=\sigma_1$.  Setting
$w(s) = p(s)^{-1}\tilde{w}(s)$, where $\bar{p}(s)= \tilde{w}^{H}(s)v(s)$, then leads to the representation $\P_{\c}(s)= v(s) w(s)^{H}$ with $w(s)^{H}v(s)=1$. The estimate \eqref{eq3:dichcentral} follows directly from this representation.

Assertion (a) follows since  the real implicit function theorem equally applies to $T$ from \eqref{eq3:Tdef} when restricted
to $\R^6$. By uniqueness we then have $\lambda(s)\in \R$ for $s \in [-\delta,\delta]$ as well as $\lambda'(0),\lambda''(0) \in \R$.
We differentiate \eqref{eq3:evs} at $s=0$ and find using \eqref{eq3:v0w0}
\begin{align} \label{eq3:impdiff}
  M_+'(0)v_0 + M_+(0)v'(0)= \lambda'(0)v_0, \quad w_0^{\top} v'(0)=0.
\end{align}
Multiplying by $w_0^{\top}$ yields $\lambda'(0)\sigma_1=c^{-1}y^{\top}A A^{-1}\left( \begin{smallmatrix} 0 \\ 1
  \end{smallmatrix} \right)=c^{-1} \sigma_1$,
hence $\lambda'(0)=\frac{1}{c} >0$. Moreover, solving \eqref{eq3:impdiff} for $v'(0)$ and using Assumption \ref{A3} yields $v'(0)= (0, q, 0, c^{-1})^{\top}$ where
\begin{align*} 
c^2 \sigma_1q= \alpha_1 \sigma_1 + \alpha_2 \sigma_2=-2|v_{\infty}|^2\left(\alpha_1 g'_1(|v_{\infty}|^2)+\alpha_2 g'_2(|v_{\infty}|^2\right)
    >0.
\end{align*}
Differentiating \eqref{eq3:evs} twice at $s=0$ and multiplication by $w_0^{\top}$ leads to
\begin{align*}
  \lambda''(0) \sigma_1= 2 w_0^{\top}M'_+(0)v'(0)= 2 c^{-1}y^{\top}A A^{-1}\begin{pmatrix} 0 \\ q \end{pmatrix}=
  \frac{2\sigma_1 q}{c},
\end{align*}
hence $\lambda''(0)<0$. Finally, we establish (c) by using the Taylor expansion
\begin{align} \label{eq3:explambda}
  \lambda(s)= \lambda'(0)s + \frac{1}{2} \lambda''(0)s^2 + \cO(|s|^3).
\end{align}
Since $\lambda'(0)>0$ we have $C_1|s| \le |\lambda(s)| \le C_2|s|$ for $|s|\le \delta$ and some $C_1,C_2,\delta>0$.
This implies  $\lambda(s)\ge C \lambda(s)^2$ for  real $s \in (0,\delta]$.
  Otherwise $\tau:= \Im(s)\neq 0$ and we can write $s=i\tau - a \tau^2$ where $|\tau|, |a|\tau^2 \le |s|\le \delta$.
 In the following we consider  $a \le \kappa_{\star}$ where $\kappa_{\star}>0$ is still to be determined.
 We insert $s$ into \eqref{eq3:explambda} and obtain
 \begin{align*}
   \Re \lambda(s) &= \tau^2\gamma(a,\tau), \\
   \gamma(a,\tau) &= -a \lambda'(0) + \tfrac{1}{2}\lambda''(0)a^2\tau^2
   -\tfrac{1}{2} \lambda''(0)  + \cO(\delta(1+a^2 \tau^2)).
 \end{align*}
 If $0 \le a \le \kappa_{\star}$ we find for $\delta,\kappa_{\star}$ sufficiently small
 \begin{align*}
   \gamma(a,\tau) & = \tfrac{1}{2}|\lambda''(0)|(1- a^2 \tau^2)- a \lambda'(0) + \cO(\delta(1+a^2\tau^2)) \\
   &  \ge \tfrac{1}{2}|\lambda''(0)|(1- \kappa_{\star}\delta) - \kappa_{\star}\lambda'(0) -C\delta(1+\kappa_{\star} \delta)
   \ge \gamma_0 >0.
 \end{align*}
Since $|s|^2 \le \tau^2(1+\kappa_{\star}a\tau^2) \le \tau^2(1+\kappa_{\star}\delta)$  there exists a constant  $C>0$ such that
 \begin{align*}
   \Re \lambda(s)\ge \gamma_0 \tau^2 \ge C\tau^2(1+\kappa_{\star}\delta) \ge C|s|^2 \ge \tfrac{C}{C^2_2} |\lambda(s)|^2.
 \end{align*}
 In case $a\le 0$ we obtain for small $\delta\le 1$ and suitable $C,C_3>0$
 \begin{align*}
   \gamma(a,\tau)&=|a|(\lambda'(0)+ \tfrac{1}{2}\lambda''(0)|a|\tau^2) - \tfrac{1}{2}\lambda''(0) + \cO(\delta(1+a^2\tau^2))\\
   & \ge |a|(\lambda'(0)-|\lambda''(0)|\tfrac{\delta}{2}) - \tfrac{1}{2}\lambda''(0)- C_3\delta(1+a^2 \tau^2) \\
   & \ge \left( \tfrac{1}{2 \delta}\lambda'(0)-C_3 \delta\right) a^2 \tau^2 + \tfrac{1}{2}(|\lambda''(0)|-2 C_3 \delta)
   \ge C(1+ a^2 \tau^2).
 \end{align*}
 Finally, this leads to
 \begin{align*}
 \Re \lambda(s)\ge C \tau^2(1+a^2 \tau^2)= C|s|^2 \ge \tfrac{C}{C_2^2} |\lambda(s)|^2,
 \end{align*}
which finishes the proof.
\end{proof}

Using the roughness theorems \cite[Proposition 4.1]{Coppel} and Theorem \ref{RoughTricho} we establish analogous
estimates for the variable coefficient operator $\M$ from \eqref{firstvar}.

\begin{lemma} \label{dichoL}
  Let Assumptions \ref{A1}-\ref{A3} be satisfied, let $\delta$, $a^{\pm}$, $b^{\pm}$ be given  by Lemma \ref{dichomatrices},
  and take any  $\alpha^{\pm},\beta^{\pm}$ with 
  $a^{\pm}<\alpha^\pm <  0 < \beta^\pm< b^{\pm}$. Then the following statements hold for $s \in B_{\delta}(0)$ and $\delta$ sufficiently small:
\begin{enumerate}
\item[(i)] The operator $\M(s)$ has an exponential dichotomy on $\R_-$ with data \\ $(K,\alpha^-,\beta^-,P_{\st,\un}^-(s,\cdot))$ where for each $x \in \R_-$ the projectors $P_{\st,\un}^-(s,x)$ depend analytically on $s$ and have rank $m_\st^-(s) = m_\un^-(s) = 2$.
\item[(ii)] The operator $\M(s)$ has an exponential trichotomy on $\R_+$ with data \\ $(K,\alpha^+,\nu(s),\beta^+,P_{\st,\c,\un}^+(s,\cdot))$ where $\nu(s)=\Re \lambda(s)$ is given by Lemma \ref{dichomatrices} (ii) and the projectors $P_\kappa^+(s,x), \kappa = \un,\c,\st, x \ge 0$ have rank $m_\st^+(s) = 2$ and $m_\c^+(s) = m_\un^+(s) = 1$. The exponents  satisfy $\alpha^+ < \nu(s) < \beta^+ $ and the following estimates hold:
\begin{equation} \label{trichoplus}
\begin{aligned}
	& |\S(s,x,y) P_\st^+(s,y)| \le K e^{\alpha^+(x-y)}, \, |\S(s,x,y) P_\c^+(s,y)| \le K e^{\nu(s)(x-y)}, \, x \ge y, \\
	& |\S(s,x,y) P_\un^+(s,y)| \le K e^{\beta^+(x-y)}, \, |\S(s,x,y) P_\c^+(s,y)| \le K e^{\nu(s)(x-y)}, \, x \le y.
\end{aligned}
\end{equation}
\end{enumerate}
\end{lemma}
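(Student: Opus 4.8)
The strategy is to transfer the constant-coefficient spectral decompositions of $M_\pm(s)$ from Lemma \ref{dichomatrices} to the variable-coefficient operator $\M(s)$ via the roughness theory for dichotomies and trichotomies. The key observation is that $M(s,x) \to M_\pm(s)$ as $x\to\pm\infty$ exponentially fast: indeed, $M(s,x)$ differs from its limit by $-A^{-1}(Df(v_\star(x)) - Df(v_\infty))$ on $\R_+$ (and $-A^{-1}(Df(v_\star(x)) - Df(0))$ on $\R_-$), and by Assumption \ref{A2} together with the asymptotics $v_{\star,x},v_{\star,xx}\to 0$ recalled after Assumption \ref{A3}, the profile $v_\star$ approaches $v_\infty$ (resp. $0$) exponentially, so this perturbation decays exponentially. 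Hence for part (i) I would argue as follows: $M_-(0)$ is hyperbolic with two-dimensional stable/unstable subspaces by Lemma \ref{dichomatrices}(i), so the autonomous system $z' = M_-(0)z$ has an exponential dichotomy on $\R$ with exponents, say, $a^-<0<b^-$; by analyticity of the Riesz projectors in $s$ this persists for $\M(s)$ with $s\in B_\delta(0)$ after shrinking to exponents $\alpha^-,\beta^-$. Then the roughness theorem \cite[Proposition 4.1]{Coppel}, applied to the exponentially small perturbation $M(s,x)-M_-(s)$ on $\R_-$, yields an exponential dichotomy for $\M(s)$ on $\R_-$ with the same exponents $\alpha^-,\beta^-$ (provided $\delta$ is small enough that the perturbation is small in the relevant sense) and with projectors $P^-_{\st,\un}(s,x)$ of the stated ranks. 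Analyticity in $s$ of these projectors is inherited from analyticity of the solution operator $\S(s,\cdot,\cdot)$ and of the limiting projectors, since the roughness construction expresses $P^-(s,x)$ through a fixed-point/integral formula that depends analytically on the data.

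For part (ii), the situation on $\R_+$ is genuinely a trichotomy rather than a dichotomy because $M_+(s)$ has the three spectral groups of Lemma \ref{dichomatrices}(ii): the stable group with decay $a^+<0$, the unstable group with growth $b^+>0$, and the one-dimensional central group carrying the eigenvalue $\lambda(s)$ with $\nu(s)=\Re\lambda(s)$ pinched between $\alpha^+$ and $\beta^+$ (this is where the gap conditions $a^+<\alpha^+<0<\beta^+<b^+$ and the smallness of $\delta$ are used: we need $\alpha^+<\nu(s)<\beta^+$, which holds since $\nu(s)=\Re\lambda(s)=O(\delta)$ by the Taylor expansion \eqref{eq3:explambda}). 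The autonomous system $z'=M_+(s)z$ has an exponential trichotomy on $\R$ with data $(K,\alpha^+,\nu(s),\beta^+,\widehat P^+_{\st,\c,\un}(s))$, the central estimate \eqref{eq3:dichcentral} giving precisely the $e^{\nu(s)(x-y)}$ bound in both time directions. Applying the roughness theorem for trichotomies, Theorem \ref{RoughTricho}, to the exponentially decaying perturbation $M(s,x)-M_+(s)$ on $\R_+$ produces the exponential trichotomy for $\M(s)$ on $\R_+$ with data $(K,\alpha^+,\nu(s),\beta^+,P^+_{\st,\c,\un}(s,x))$, and the estimates \eqref{trichoplus} are exactly the defining inequalities of such a trichotomy. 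The ranks are preserved by roughness, and analyticity in $s$ again follows because the trichotomy projectors are obtained from a construction (integral equations for the three invariant families) whose inputs---$\S(s,\cdot,\cdot)$, $\widehat P^+_\kappa(s)$, and $M(s,\cdot)-M_+(s)$---are all analytic in $s$.

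The main obstacle is the uniformity of the roughness estimates as $s$ ranges over $B_\delta(0)$, and in particular the fact that the central exponent $\nu(s)=\Re\lambda(s)$ is not bounded away from zero---it vanishes at $s=0$. One must check that the gaps between consecutive exponents, namely $\nu(s)-\alpha^+$ and $\beta^+-\nu(s)$, stay bounded below uniformly in $s\in B_\delta(0)$; this is fine because $\nu(s)=O(|s|)=O(\delta)$ while $\alpha^+<0<\beta^+$ are fixed, but it does mean $\delta$ must be chosen small relative to $|\alpha^+|$ and $\beta^+$. The roughness constants then depend only on these fixed gaps and on the (uniform in $s$) exponential decay rate of $M(s,x)-M_+(s)$, so one obtains a single $K$ and a single $\delta$ working for all $s$; this is precisely the content of the quantitative versions of \cite[Proposition 4.1]{Coppel} and Theorem \ref{RoughTricho}. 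A secondary technical point is verifying analyticity of the perturbed projectors, which I would handle by noting that the roughness construction is a contraction in a Banach space of exponentially weighted functions whose contraction mapping depends analytically on $s$, so the fixed point---and hence the projectors---depend analytically on $s$ by the analytic implicit function / uniform-limit-of-analytic-functions argument.
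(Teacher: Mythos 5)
Your proposal follows essentially the same route as the paper: exponential convergence of $M(s,\cdot)$ to $M_\pm(s)$, Coppel's roughness theorem \cite[Proposition 4.1]{Coppel} for the dichotomy on $\R_-$, and Theorem \ref{RoughTricho} for the trichotomy on $\R_+$, including the observation that $\nu(s)=\Re\lambda(s)=\cO(\delta)$ stays pinched between the fixed exponents $\alpha^+<0<\beta^+$. One small caveat: the exponential rate $|v_\star(x)-v_\infty|\lesssim e^{-\mu_* x}$ (and the analogous decay at $-\infty$) does not follow directly from Assumption \ref{A2} together with $v_{\star,x},v_{\star,xx}\to 0$; the paper invokes \cite[Theorem 2.5]{BeynDoeding22} for this, and one should also explicitly verify the hypotheses of Theorem \ref{RoughTricho} on the central projector, namely that $\P^+_\c(s)=v(s)w(s)^H$ with $e^{-\nu(s)x}|e^{M_+(s)x}v(s)|$ and $e^{\nu(s)x}|e^{-M_+(s)^Hx}w(s)|$ uniformly bounded above and below, which the paper does and your sketch leaves implicit.
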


\begin{proof}
By \cite[Theorem 2.5]{BeynDoeding22} there are $C,\mu_* > 0$  such that we have for $s \in B_{\delta}(0)$ 
\begin{align*}
	|M(s,x) - M_-(s)| \le |A^{-1}| |Df(v_\star(x)) - Df(0)| \le C e^{\mu_* x}, \quad x\le 0.
\end{align*}
Then the first assertion is a consequence of Lemma \ref{dichomatrices} and \cite[Proposition 4.1]{Coppel}. For the second assertion recall the ordinary exponential trichotomy of $\partial_x-M_+(s)$ on $\R_+$. Let $w(s), v(s) \in \C^n$ be the left and right eigenvectors of $M_+(s)$ associated with the eigenvalue $\lambda(s)$ from Lemma \ref{dichomatrices}. Then $\varphi(s,x) = e^{M_+(s)x} v(s)$ solves $\varphi_x-M_+(s)\varphi = 0$ and  $\psi(s,x) = e^{-M_+(s)^H x} w(s)$ solves the adjoint system $- \psi_x - M_+(s)^H\psi = 0$ on $\R_+$. Further, the central projector is given by  $\P_\c^+(s) = v(s) w(s)^H = \varphi(s,x) \psi(s,x)^H$ and the functions $e^{- \nu(s) x}|\varphi(s,x)|$ and $e^{\nu(s) x}|\psi(s,x)|$ are constant w.r.t. $x$. By \cite[Theorem 2.5]{BeynDoeding22} there exist $C,\mu_* > 0$  such that for all  $s \in B_{\delta}(0)$
\begin{align*}
	|M(s,x) - M_+(s)| \le |A^{-1}| |Df(v_\star(x)) - Df(v_\infty)| \le C e^{-\mu_* x}, \quad x \ge 0.
\end{align*}
Applying Theorem \ref{RoughTricho} we conclude that $\M(s)$ has an ordinary exponential trichotomy on $\R_+$ and we can choose the exponents $\alpha^+$, $\beta^+$ arbitrarily close to $a^+, b^+$ such that the estimates \eqref{trichoplus} hold.
\end{proof}
\subsection{Resolvent estimates for the linearized operator $L$}

In this subsection we study the resolvent equation \eqref{poly:resolvGl} of the linearized operator for right hand sides $r \in L^2_\ell$ with suitable $\ell \ge k$ and for $s \in \C$ in a suitable neighborhood of the origin. This neighborhood has
the shape of a crescent (see Figure \ref{essspec}) with a parabolic section determined by the value of $\kappa$ from
the rounded sector $S_{\kappa,\gamma,\rho}$ in Theorem \ref{Thm:essential} and by $\kappa_{\star}$ from Lemma \ref{dichomatrices}(ii)(c). We assume $\kappa \le \kappa_{\star}$ w.l.o.g. and define the $\delta$-crescent by 
\begin{align} \label{crescent}
	\Omega_\delta := S_{\kappa,\gamma,\rho} \cap B_\delta(0).
\end{align}
In the following we take $\delta \in (0,\rho)$ sufficiently small and we recall
that $0 \notin \Omega_\delta$ by the definition of $S_{\kappa,\gamma,\rho}$. 
We aim to show uniform bounds of the resolvent $(sI - L)^{-1}$ for $s \in \Omega_\delta$ w.r.t. suitable norms. Since the essential spectrum of $L$ touches the imaginary axis quadratically at the origin (Theorem \ref{Thm:essential}) a singularity of the resolvent of $L$ occurs at $s = 0$. In particular, this prevents us from showing uniform bounds for the resolvent on $L^2_k$ for $s \in \Omega_\delta$. Instead, we assume a stronger decay of the right hand side in \eqref{poly:resolvGl}, i.e., $r \in L^2_{k + 1 + \mu}$ for some $\mu > 0$, and show uniform bounds in $\Omega_\delta$ of the resolvent $(sI - L)^{-1}$ as an operator from $L^2_{k+1+\mu}$ to $L^2_k$. In addition, we resolve the order of the singularity caused by the essential spectrum in dependence on $0 \le q < 1$ when $r \in L^2_{k+q}$. The approach uses the exponential dichotomy and exponential trichotomy of the first order operator $\M(s)$ from Lemma \ref{dichoL} and the following estimates.
   
\begin{lemma} \label{est}
The following statements hold:
\begin{enumerate}
\item \label{item1} For every $k \ge 0$ and $0 \le q \le 1$ with either $k > 0$ or $q < 1$ there exists $C = C(k,q) > 0$ such that for all $\beta > 0$ and $x \ge 0$:
     \begin{equation} \label{eq3:est1}
     \eta^k(x) \int_x^{\infty} \eta^{-(k+q)}(y)e^{\beta(x-y)} dy \le C \beta^{q - 1}.
   \end{equation}
The estimate \eqref{eq3:est1} still holds in case $q = 1$ and $\beta = 0$.

\item \label{item3} For every $0 \le q < 1$ there is $C = C(q) > 0$ such that for all $\beta > 0$ and $x \ge 0 $
\begin{equation} \label{eq3:est4}
	\int_0^x \eta^{-q}(y) e^{\beta(x-y)} dx \le C \beta^{q-1}.
\end{equation}

\item \label{item2} For every $\beta_0 > 0$ and $k \ge 1$ there exists  $C = C(\beta_0,k) > 0$ such that for all $0 < \beta \le \beta_0$ and $x \ge 0$:
      \begin{equation} \label{eq3:est3}
      \eta^k(x) \int_0^{x} \eta^{-k}(y)e^{\beta(y-x)} dy \le C \begin{cases} \beta^{-k}, & k >1, \\
        |\log(\beta)|\beta^{-1}, & k=1. \end{cases}
    \end{equation}

\end{enumerate}
\end{lemma}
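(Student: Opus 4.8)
The plan is to prove each of the three estimates by elementary substitutions and splitting of the integration range, tracking the competition between the polynomial weight $\eta$ and the exponential factor. For part \eqref{item1} I would substitute $t = y - x \ge 0$, so that the left-hand side becomes $\eta^k(x)\int_0^\infty \eta^{-(k+q)}(x+t)e^{-\beta t}\,dt$. The key pointwise inequality is $\eta(x+t) \ge c\,\eta(x)$ and, more usefully, $\eta(x+t)\ge c\,\eta(t)$ and $\eta(x+t) \ge c(\eta(x)+t)/\sqrt2$ for $x,t\ge 0$; hence $\eta^k(x)\eta^{-(k+q)}(x+t) \le C\,\eta^{-q}(x+t)\,(\eta(x)/\eta(x+t))^{k} \le C\,\eta^{-q}(t)$ since the last bracket is $\le 1$. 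Thus the integral is bounded by $C\int_0^\infty (1+t^2)^{-q/2}e^{-\beta t}\,dt$. For $0\le q<1$ this integral behaves like $\beta^{q-1}$: split at $t=1/\beta$, bounding $(1+t^2)^{-q/2}\le 1$ on $[0,1/\beta]$ to get a contribution $\le 1/\beta$, and bounding $e^{-\beta t}\le$ (rapidly decaying) together with $t^{-q}$ on $[1/\beta,\infty)$ to get $\le C\beta^{q-1}$ as well; rescaling $\tau=\beta t$ makes this transparent, $\int_0^\infty(1+\tau^2/\beta^2)^{-q/2}e^{-\tau}\beta^{-1}d\tau$, and for small $\beta$ the $1$ is negligible against $\tau^2/\beta^2$ on the bulk of the mass. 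The case $q=1$, $k>0$ needs the extra decay from $\eta^{-k}(x)$-type factors: here one keeps one power, writing the bound as $C\int_0^\infty \eta^{-1}(t)e^{-\beta t}dt$, which is $\le C|\log\beta|$; but the claimed bound is $C\beta^{q-1}=C$, so in fact one must instead retain the factor $\eta(x)/\eta(x+t)$ more carefully when $k>0$ — using $\eta^{k}(x)\eta^{-(k+1)}(x+t)\le \eta^{-1}(x+t)\cdot(\eta(x)/\eta(x+t))^k$ and noting $(\eta(x)/\eta(x+t))^k \le (\eta(x)/(c(\eta(x)+t)))^k$, whose integral against $\eta^{-1}(x+t)\,dt$ is uniformly bounded in $x$ and $\beta$ — this is the one subtle point and the reason the hypothesis "$k>0$ or $q<1$" appears. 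The final clause ($q=1$, $\beta=0$) is then just $\eta^k(x)\int_x^\infty \eta^{-(k+1)}(y)\,dy \le C$, immediate by comparison with $\int_x^\infty y^{-(k+1)}dy \asymp x^{-k}$ for large $x$.

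For part \eqref{item3}, note first the statement as written has the integration variable $dx$ where it should read $dy$ (the left side is a function of $x$); reading it as $\int_0^x \eta^{-q}(y)e^{\beta(x-y)}\,dy$, I substitute $t=x-y\in[0,x]$ to obtain $\int_0^x \eta^{-q}(x-t)e^{-\beta t}\,dt$. Since $q<1$ we simply drop the weight, $\eta^{-q}\le 1$, giving $\le \int_0^\infty e^{-\beta t}\,dt = \beta^{-1}$; to upgrade this to $C\beta^{q-1}$ one splits at $t = x/2$ versus $t>x/2$, or more simply observes $\int_0^\infty (1+(x-t)^2)^{-q/2}e^{-\beta t}dt$ and uses the same rescaling argument as above after the reflection $t\mapsto x-t$ in the near-$x$ part — the point is that the mass concentrates where $x-t=O(1/\beta)$, producing the exponent $q-1$. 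This is strictly easier than part \eqref{item1}.

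For part \eqref{item2}, substitute $t=x-y\in[0,x]$: the left-hand side is $\eta^k(x)\int_0^x \eta^{-k}(x-t)e^{-\beta t}\,dt$. Now the weight works against us because $\eta^{-k}(x-t)$ blows up as $t\to x$; but the exponential $e^{-\beta t}$ is then small, of size $e^{-\beta x}$. Split $[0,x]$ at $t=x/2$. On $[0,x/2]$ we have $\eta(x-t)\ge c\,\eta(x)$, so $\eta^k(x)\eta^{-k}(x-t)\le C$ and that piece is $\le C\int_0^\infty e^{-\beta t}dt = C\beta^{-1}\le C\beta^{-k}$ (using $\beta\le\beta_0$, $k\ge1$). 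On $[x/2,x]$ we bound $e^{-\beta t}\le e^{-\beta x/2}$ and are left with $\eta^k(x)e^{-\beta x/2}\int_{x/2}^x \eta^{-k}(x-t)\,dt = \eta^k(x)e^{-\beta x/2}\int_0^{x/2}\eta^{-k}(s)\,ds$. For $k>1$ the last integral converges as $x\to\infty$ to a constant, so the piece is $\le C\,\eta^k(x)e^{-\beta x/2} \le C\sup_{r\ge0}(1+r)^k e^{-\beta r/2}\le C\beta^{-k}$ by elementary calculus (the sup of $r^k e^{-\beta r/2}$ is $\asymp(k/\beta)^k$). For $k=1$ the inner integral $\int_0^{x/2}\eta^{-1}(s)\,ds\asymp\log(2+x)$, giving $\le C(1+x)\log(2+x)e^{-\beta x/2}$; maximizing in $x\ge0$ yields $\le C|\log\beta|\beta^{-1}$ for $\beta\le\beta_0$, which is exactly the logarithmic correction claimed. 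The main obstacle throughout is bookkeeping the near-endpoint regime in \eqref{item1} (the $q=1$ borderline) and in \eqref{item2} (the $k=1$ logarithmic case); once the correct splitting point and the elementary bound $\sup_{r\ge0} r^a e^{-br}\asymp (a/b)^a$ are in hand, everything reduces to one-variable calculus and the only care needed is to make the constants uniform in $x$ and in $\beta$ on the stated ranges.
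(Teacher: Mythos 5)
Your overall strategy --- substitute to normalize the exponential, split the integration domain at a scale $\sim 1/\beta$, and balance the polynomial weight against the exponential --- is the same elementary route the paper takes. Part \ref{item2} (estimate \eqref{eq3:est3}) is handled correctly and by a genuinely different split: you cut at the $\beta$-independent point $t=x/2$ and then maximize $(1+x)^k e^{-\beta x/2}$ (resp.\ $(1+x)\log(2+x)e^{-\beta x/2}$) over $x$, whereas the paper cuts at the $\beta$- and $\xi$-dependent point $\tau=\delta\beta^{(k-1)/k}\xi$ and invokes $\max_{y\ge0}y^ke^{-\gamma y}=(k/(e\gamma))^k$. Both recover the same rates, including the $|\log\beta|$ correction at $k=1$, and your version is arguably more transparent.

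The other two parts have concrete gaps. In part \ref{item1}, bounding $(1+t^2)^{-q/2}\le1$ on $[0,1/\beta]$ gives only $\le 1/\beta$, which is \emph{strictly larger} than the target $\beta^{q-1}$ for $0<q<1$ and $\beta<1$; the algebraic weight must be retained on this piece, since $\int_0^{1/\beta}(1+t)^{-q}\,dt\asymp\beta^{q-1}$ is what produces the correct rate (this is exactly the paper's split at $\tau=1/\beta-1$, keeping $(1+y)^{-q}$ on $[x,\tau]$). Your rescaling remark points at the right answer, but as stated it is a heuristic, not a proof. In part \ref{item3} the proposal is too vague to verify: a bare split at $t=x/2$ does not yield $\beta^{q-1}$ uniformly in $x$. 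When $1+x\lesssim 1/\beta$ the piece over $[0,x/2]$ has size roughly $\eta^{-q}(x)\min(x,\beta^{-1})$ and must be controlled via $(1+x)^{1-q}\lesssim\beta^{q-1}$, not via $\eta^{-q}(x)\beta^{-1}$; the essential case distinction is $1+x\lessgtr 1/\beta$, not $t\lessgtr x/2$, and the paper makes this explicit by splitting the $y$-integral at $y=1/\beta-1$ and treating $\tau\lessgtr x$ separately. Finally, your treatment of the $q=1$, $k>0$ endpoint of part \ref{item1} is correct but roundabout: since $e^{\beta(x-y)}\le1$ for $y\ge x$, one can simply drop the exponential and compare $\eta^k(x)\int_x^\infty\eta^{-(k+1)}(y)\,dy$ with $(1+x)^k\int_x^\infty(1+y)^{-(k+1)}dy=k^{-1}$, as the paper does in one line.
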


The proof of Lemma \ref{est} is given in Appendix \ref{appendixC}.

\begin{remark} \label{rem:Kapitula}
Lemma \ref{est} is an extension and a correction of  \cite[Lemma 3.2]{Kapitula91}, \cite[Lemma 3.3]{Kapitula94}. The extension concerns $k$ and $q$ to be noninteger in \eqref{eq3:est1}, \eqref{eq3:est3}, the new estimate \eqref{eq3:est4}, and the use of the weight function $\eta$ which avoids a case-by-case analysis of $x\le 1$ and $x \ge 1$. The correction concerns the estimate \eqref{eq3:est3} which is claimed there to hold with $C \beta^{-2}$ for all $k \in \N$.  For $k<2$ our estimate is an improvement, while for $k >2$ it corrects a flaw in  \cite[Lemma 3.2]{Kapitula91}. Let us further note that the forthcoming estimates will use counterparts of \eqref{eq3:est1} and \eqref{eq3:est3} on $\R_-$ which
are obtained by reflection.
\end{remark}

Apart from the singularity of the resolvent caused by the essential spectrum, another difficulty appears since $L$ has a nontrivial kernel $\ker(L) = \mathrm{span}( v_{\star,x} )$  caused by the translation invariance of \eqref{rEvo}. Roughly speaking, there is an "eigenvalue" hidden in the essential spectrum at the origin. The strategy then is to
split the space $L^2_{k}$, $k \ge 1$ into $\ker(L)$ and into its orthogonal complement. In this way
we obtain uniform resolvent estimates on the complement while simultaneously keeping track of the singular growth of the resolvent on $\ker(L)$. \\

\begin{proposition} \label{propEV}
  Let Assumption \ref{A1}-\ref{A4} be satisfied and let $k \ge 1$. Then there exists $\delta > 0$ and for every $0 \le q < 1$, $\mu>0$ a constant $C = C(k,q,\mu)$  such that the system \eqref{firstvar} has a unique solution $z(s) \in H^1_k$
  for all $s \in \Omega_\delta$ and $R \in L^2_{k}$ . This solution satisfies the estimates
\begin{alignat}{2} 
\| z(s) \|_{L^2_{k}} & \le C \Big(\frac{1}{|s|} \| Q_k R \|_{L^2_{k}} + |s|^{2q - 2} \|(I-Q_k)R\|_{L_{k+q}^2}\Big) \quad && \text{if } R \in L^2_{k + q}, \label{Yestnew2} \\
	\| z(s) \|_{L^2_{k}} & \le C \Big(\frac{1}{|s|} \| Q_k R \|_{L^2_{k}} + \|(I-Q_k)R\|_{L_{k+1+\mu}^2}\Big) \quad && \text{if } R \in L^2_{k + 1 + \mu}, \label{Yestnew}
\end{alignat}
where the map $Q_k:L_{k}^2 \to \mathrm{span}(\varphi)$ is given by
\begin{align*}
  Q_kR= \varphi(\psi,R)_{L^2}, \quad \varphi=\begin{pmatrix}v_{\star,x} & v_{\star,xx} \end{pmatrix}^{\top} 
\end{align*}
and $\psi$ is the unique bounded solution of the adjoint system
\begin{align}\label{eq3:adjoint}
  (\partial_x + M(0,\cdot)^{\top}) \psi = 0 \text{ in } \R, \quad -(\psi, \partial_sM(0,\cdot)\varphi)_{L^2}=1.
\end{align}
If $R \in L^2_{k + 1 +\mu}$ with $Q_kR=0$ then $z(s)$, $s \in \Omega_{\delta}$ can be continuously extended by some $z(0)\in H_k^1$ which solves \eqref{firstvar} for $s=0$ and
which satisfies $\|z(0)\|_{L_k^2}\le C \|R\|_{L^2_{k+1+\mu}}$.
\end{proposition}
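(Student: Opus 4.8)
The plan is to rewrite the resolvent equation \eqref{poly:resolvGl} as the first order system \eqref{firstvar} for $z=(v,v_x)^\top$ and to construct $z(s)$ by variation of constants, using the exponential dichotomy of $\M(s)$ on $\R_-$ and the exponential trichotomy on $\R_+$ furnished by Lemma \ref{dichoL}. On $\R_-$ the $L^2_k$-solutions of \eqref{firstvar} form the two-parameter family $z_-(x)=z_-^{\mathrm{par}}(s,x)+\S(s,x,0)\xi_-$ with $\xi_-\in\ran P^-_\un(s,0)$, and on $\R_+$ they form $z_+(x)=z_+^{\mathrm{par}}(s,x)+\S(s,x,0)\xi_+$ with $\xi_+\in\ran P^+_\st(s,0)$, where $z_\pm^{\mathrm{par}}$ are the Green's representations built from the (tri-)dichotomy projectors. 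Matching $z_-(0)=z_+(0)$ produces a square linear system $\mathcal E(s)(\xi_-,\xi_+)^\top=D(s)R$ in $\C^4$ with $\mathcal E(s)$ analytic in $s$ and, since $k\ge 1$ forces $L^2_k\hookrightarrow L^1$, with $\|D(s)R\|\le C\|R\|_{L^2_k}$ uniformly for $s\in B_\delta(0)$. From its solution one reads off $z(s)\in H^1_k$; uniqueness is immediate because $L^2_k\hookrightarrow L^2$ while $\Omega_\delta\subset\mathrm{res}(L)$ on $L^2$ by Theorem \ref{Thm:essential} and Assumption \ref{A4}.

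Two mechanisms govern the size of $z(s)$. First, all exponentially gapped pieces --- the $\R_-$ part and the $P^+_\st$, $P^+_\un$ parts on $\R_+$ --- are bounded operators $L^2_k\to L^2_k$ with norm uniform in $s\in B_\delta(0)$; this is the estimates \eqref{eq3:est1} and \eqref{eq3:est4} of Lemma \ref{est} (and their reflections on $\R_-$) applied with the \emph{fixed} rates $\alpha^\pm,\beta^\pm$. Second, and this is the delicate estimate, the central bundle on $\R_+$ contributes $z_\c(s,x)=-\int_x^\infty\S(s,x,y)P^+_\c(s,y)R(y)\,dy$, which is nonuniform because $\nu(s)=\Re\lambda(s)\to0$. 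Here I would split the weight $\eta^{k+q}$, apply Cauchy--Schwarz in $y$, and then invoke \eqref{eq3:est1} with $\beta=\nu(s)$ for the inner $y$-integral and, after Fubini, \eqref{eq3:est4} for the $x$-integral; combined with the lower bound $\nu(s)\ge c|s|^2$ on $\Omega_\delta$ from Lemma \ref{dichomatrices}(ii)(c) this yields $\|z_\c(s,\cdot)\|_{L^2_k}\le C\nu(s)^{q-1}\|R\|_{L^2_{k+q}}\le C|s|^{2q-2}\|R\|_{L^2_{k+q}}$ for $R\in L^2_{k+q}$, $0\le q<1$. If instead $R\in L^2_{k+1+\mu}$, the same scheme with \eqref{eq3:est1} taken at $q=1$ (so the $\beta$-power drops out) and with the spare decay $\eta^{-\mu}$ making the outer integral converge gives the \emph{uniform} bound $\|z_\c(s,\cdot)\|_{L^2_k}\le C\|R\|_{L^2_{k+1+\mu}}$ --- the margin $\mu>0$ is needed precisely for this $L^2_k$-integrability. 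Finally, since $\varphi=(v_{\star,x},v_{\star,xx})^\top$ decays exponentially (Assumption \ref{A1} and \eqref{eq2:vasymp}), feeding $Q_kR\in\mathrm{span}(\varphi)$ into $z_\c$ contributes only $\le C\|Q_kR\|_{L^2_k}$, so the $|s|^{2q-2}$-singular term attaches to $(I-Q_k)R$ alone.

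The heart of the matter is the matching matrix $\mathcal E(s)$, where $\ker(L)$ enters. At $s=0$, $\mathcal E(0)$ has a one-dimensional kernel spanned by the vector $e_0$ corresponding to the global bounded solution $\varphi$ of $\M(0)z=0$; note $e_0\in\ran P^-_\un(0,0)\cap\ran P^+_\st(0,0)$, because $v_{\star,x}$ decays exponentially at both ends and carries no central component at $+\infty$. It also has a one-dimensional cokernel spanned by a functional $f_0$ dual to the bounded adjoint solution $\psi$ of \eqref{eq3:adjoint}. The assumption $\dim\ker(L)=\dim\ker(L^2)=1$ of Assumption \ref{A4} excludes a Jordan block, which is equivalent to the first order nondegeneracy $f_0^H\mathcal E'(0)e_0\ne0$; with $e_0,f_0$ suitably normalized and $\partial_sM=\left(\begin{smallmatrix}0&0\\A^{-1}&0\end{smallmatrix}\right)$ this quantity is exactly the Evans-function-derivative (Melnikov) integral $-(\psi,\partial_sM(0,\cdot)\varphi)_{L^2}$, equal to $1$ by \eqref{eq3:adjoint}. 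A Lyapunov--Schmidt reduction then shows $\mathcal E(s)$ invertible for $0<|s|<\delta$ with $\mathcal E(s)^{-1}=\tfrac1s\,e_0f_0^H+\mathcal E_{\mathrm{reg}}(s)$, $\sup_{\Omega_\delta}\|\mathcal E_{\mathrm{reg}}(s)\|<\infty$, and a short computation gives $f_0^HD(s)R=(\psi,R)_{L^2}$. Hence $z(s)=z_{\mathrm{reg}}(s)+\tfrac{(\psi,R)_{L^2}}{s}\,\S(s,\cdot,0)e_0$, where $\S(s,\cdot,0)e_0$ decays exponentially on all of $\R$ uniformly in $s$; so the singular part has $L^2_k$-norm $\le\tfrac{C}{|s|}|(\psi,R)_{L^2}|=\tfrac{C}{|s|}\|Q_kR\|_{L^2_k}$ up to the fixed factor $\|\varphi\|_{L^2_k}^{-1}$, which is the first term in \eqref{Yestnew2}, \eqref{Yestnew}.

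Assembling the contributions and absorbing the uniformly bounded remainders ($C\|R\|_{L^2_k}\le\tfrac{C}{|s|}\|Q_kR\|_{L^2_k}+C\|(I-Q_k)R\|_{L^2_{k+q}}$ using $|s|\le\delta<1$ and $0\le q<1$, and analogously with $L^2_{k+1+\mu}$) gives \eqref{Yestnew2} and \eqref{Yestnew}; the $H^1_k$-membership and the derivative bound follow from $z_x=M(s,\cdot)z+R$ with $M(s,\cdot)$ bounded. For $s=0$ and $Q_kR=0$ one has $(\psi,R)_{L^2}=0$, so the singular term vanishes, and every remaining ingredient --- the analytic projectors, $\S$, $\mathcal E_{\mathrm{reg}}$, and $z_\c$ via the uniform bound above evaluated at $\nu(0)=0$ (the last clause of \eqref{eq3:est1}) --- extends continuously to $s=0$, giving $z(0)\in H^1_k$ solving \eqref{firstvar} for $s=0$ with $\|z(0)\|_{L^2_k}\le C\|R\|_{L^2_{k+1+\mu}}$. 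I expect the main obstacles to be the Lyapunov--Schmidt step together with the identification of the reduced nondegeneracy constant with the normalization in \eqref{eq3:adjoint}, and the sharp $L^2_k$-estimate of $z_\c$, where both the exact order $|s|^{2q-2}$ of the singularity and the necessity of $\mu>0$ hinge on the algebraic--exponential bounds of Lemma \ref{est}.
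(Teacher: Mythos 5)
Your proposal follows essentially the same route as the paper: a variation-of-constants/Green's-function construction built from the $\R_\pm$ (tri-)dichotomies of Lemma \ref{dichoL}, a $4\times 4$ matching system $\Phi(s)$ at $x=0$ whose determinant has a simple zero at $s=0$ handled by a Keldysh pole expansion (the paper's version of your Lyapunov--Schmidt reduction), and the sharp central-bundle estimates via Lemma \ref{est} combined with $\nu(s)\gtrsim |s|^2$ on the crescent. The one step you assert rather than prove --- that Assumption \ref{A4} forces the reduced nondegeneracy $\psi_0^{\top}\Phi'(0)v_0\neq 0$ --- is precisely the paper's Step 2, where the contradiction argument exploits the exponential decay of the $s$-derivatives of the stable/unstable bundles (eq. \eqref{eq3:sderiv}, via Cauchy's formula) to show that vanishing of this quantity would produce a generalized eigenfunction of $L$ in $H^2(\R)$, contradicting $\dim\ker(L^2)=1$; this is the nontrivial ingredient because $s=0$ is embedded in the essential spectrum, so the standard Evans-function equivalence cannot simply be cited.
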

\begin{remark}Writing $(\psi,R)_{L^2}$ is a slight abuse of notation since $\psi$ is only bounded. However,  the integral of $\psi^{\top}R$ exists for  $R \in L^2_k$ and  $\|Q_kR\|_{L^2_{k}}\le C \|R\|_{L^2_k}$ holds; see \eqref{eq3:psiR}.
  \end{remark}

\begin{proof}
  {\bf Step 1: Dichotomies and Trichotomies}\\
  In the following we frequently use the dichotomy resp. trichotomy of $\M(s)$  on  $\R_-$ resp. $\R_+$ provided by
  Lemma \ref{dichoL}. Let us  further note that the $s$-derivatives of solutions inherit the exponential estimates by Cauchy's theorem, i.e., for $|s| \le \delta \le \frac{\delta_0}{2}$ and sufficiently small $\delta_0$, we have for $x \ge y \ge 0$ 
  \begin{equation} \label{eq3:sderiv}
      |\partial_s\big(\S(s,x,y)P_{\st}^+(s,y)\big)| = \Big| \frac{1}{2 \pi i} \int_{|\tau|= \delta_0} \frac{\S(\tau,x,y)P_{\st}^+(\tau,y)}{(\tau-s)^2} d\tau \Big| \le \frac{4C}{\delta_0} e^{\alpha^+(x-y)},
   \end{equation}
   and for $x \le y \le  0$ 
   \begin{equation} \label{eq3:sderiv2}
      |\partial_s\big( \S(s,x,y)P_{\un}^-(s,y)\big)| = \Big| \frac{1}{2 \pi i} \int_{|\tau|= \delta_0}
      \frac{\S(\tau,x,y)P_{\un}^-(\tau,y))}{(\tau-s)^2} d\tau \Big| \le \frac{4C}{\delta_0} e^{\beta^-(x-y)}.
   \end{equation}
  Analogous estimates hold for the complementary projectors. In the following we prove the assertion for $s \in \Omega_{\delta}$.
  Let $V_{\st}^+(0,0),V_{\un}^-(0,0)\in \C^{4,2}$ be matrices such that 
  $\ran(P_{\st}^+(0,0)) =\ran(V_{\st}^+(0,0))$, $\ran(P_{\un}^-(0,0)) =\ran(V_{\un}^-(0,0))$ and define for $s \in B_{\delta}(0)$
  \begin{equation*}
    \begin{aligned}
      V_{\st}^+(s,0)&=P_{\st}^+(s,0)V_{\st}^+(0,0), \quad
     V_{\un}^-(s,0)= P_{\un}^-(s,0)V_{\un}^-(0,0), \\
     \Phi(s)&=(V_{\st}^+(s,0), V_{\un}^-(s,0)) \in \C^{4,4}.
     \end{aligned}
  \end{equation*}
  By Lemma \ref{dichoL} the matrix functions
  \begin{equation} \label{eq3:Vsol}
    V_{\st}^+(s,x)=\S(s,x,0)V_{\st}^+(s,0), \quad V_{\un}^-(s,x)=\S(s,x,0)V_{\un}^-(s,0)
  \end{equation}
  solve the homogenous equation \eqref{firstvar} and satisfy the exponential estimates
  \begin{equation*}
    | V_{\st}^+(s,x)| \le C e^{\alpha^+x} \; (x \ge 0), \quad |V_{\un}^-(s,x)| \le C e^{\beta^-x} \; (x \le 0).
  \end{equation*}
  Further, by Assumption \ref{A4} we have $\ker(L)= \mathrm{span}(v_{\star,x})$, hence there exist $v^{\pm} \in \R^2$ with
  \begin{equation*}
    \begin{aligned}
      \mathrm{span}(\varphi(0)) & = \ran(V_{\st}^+(0,0)) \cap \ran(V_{\un}^-(0,0)), \quad
       \ker(\Phi(0)) =\mathrm{span}(v_0),
      \\
       \varphi(0)&= \begin{pmatrix} v_{\star,x}(0) \\ v_{\star,xx}(0) \end{pmatrix}
      =V_{\st}^+(0,0)v^+= V_{\un}^-(0,0)v^-,  \quad v_0=\begin{pmatrix}v^+ \\ - v^- \end{pmatrix}.
    \end{aligned}
  \end{equation*}
   There also  exists some $\psi_0 \in \R^4$ with
   $  \psi_0^{\top}\Phi(0)=0$, $\ran(\Phi(0)) = \psi_0^{\perp}$. The solution operator of the adjoint $\M^{*}(s)=\partial_x+ M(s,\cdot)^{H}$ is given by
  \begin{align*}
    \S^{*}(s,x,y) = \S(s,y,x)^H, \quad s \in \C, x,y \in \R.
  \end{align*}
  It has an exponential dichotomy  with data 
\begin{equation*}  
  (K,- \beta^-(s), - \alpha^-(s), P_{\un}^{-}(s,\cdot)^H, P_{\st}^{-}(s,\cdot)^H) \quad \text{on } \R_-
\end{equation*}  
and, likewise, it has an exponential trichotomy with data
  \begin{equation} \label{eq3:adjtri}
  (K,-\beta^+(s),- \nu(s),-\alpha^{+}(s), P_{\un}^{+}(s,\cdot)^H,P_{\c}^{+}(s,\cdot)^H, P_{\st}^{+}(s,\cdot)^H)  \quad \text{on } \R_+.
 \end{equation}

  {\bf Step 2: The simple eigenvalue} \\
  From $\ran(\Phi(0))= \psi_0^{\perp}$ we have
  \begin{align*}
  \mathrm{span}( \psi_0) & = \big(\ran(P_{\st}^+(0,0)) + \ran(P_{\un}^-(0,0))\big)^{\perp}= \ran(P_{\st}^+(0,0))^{\perp} \cap \ran(P_{\un}^-(0,0))^{\perp}\\
  & = \ker(P_{\st}^{+}(0,0)^\top) \cap \ker(P_{\un}^{-}(0,0)^\top)=
  \ran(P_{\c \un}^{+}(0,0)^\top)  \cap \ran(P_{\st}^{-}(0,0)^\top),
  \end{align*}
  where we abbreviate $P_{\c \un}^+= P_{\c}^+ + P_{\un}^+$. In view of \eqref{eq3:adjtri} the space of bounded solutions of the system \eqref{eq3:adjoint}
  is one-dimensional and spanned by $\psi(x)=\S^{*}(0,x,0)\psi_0$. Further, $\psi$ satisfies the estimate
   \begin{equation} \label{eq3:psiest}
    |\psi(x)| \le K |\psi_0| \quad(x\ge 0), \qquad |\psi(x)| \le Ke^{\beta^- x}|\psi_0| \quad (x \le 0).
  \end{equation}
In the following we show that $s=0$ is a simple eigenvalue of $\Phi(\cdot)$, i.e. $\Phi(0)v_0=0$ and
  $\psi_0^{\top}\Phi'(0)v_0\neq 0$. Assuming the contrary, there exist $\zeta^{\pm}\in \R^2$ such that
  \begin{align*}
   V_{\un}^-(0,0)\zeta^--V_{\st}^+(0,0)\zeta^+ & =  \Phi(0) \begin{pmatrix} -\zeta^+ \\ \zeta^- \end{pmatrix} \\
   & =\Phi'(0) v_0=
     \partial_sV_{\st}^+(0,0)v^+- \partial_sV_{\un}^-(0,0) v^-.
  \end{align*}

  With these data the function defined by
  \begin{align*}
    z(x) = \begin{cases} \partial_s V_{\st}^+(0,x)v^+  +V_{\st}^+(0,x) \zeta^+, & x \ge 0, \\
       \partial_sV_{\un}^-(0,x) v^- + V_{\un}^-(0,x) \zeta^-, & x< 0 \end{cases}
  \end{align*}
  is continuous at $x=0$. Moreover, it satisfies the inhomogenous variational equation obtained by differentiating
  $\M(s)V(s,\cdot)=0$ for $V=V_{\st}^+,V_{\un}^-$ w.r.t. $s$ at $s=0$:
    \begin{align} \label{eq3:inhomvar}
      (\partial_x-M(0,\cdot))z =\partial_sM(0,\cdot)\begin{cases} V_{\st}^+(0,\cdot)v^+ ,& x \ge 0 \\
        V_{\un}^-(0,\cdot) v^-, & x<0 \end{cases}
      =
      \begin{pmatrix} 0& 0 \\ A^{-1} & 0 \end{pmatrix} \begin{pmatrix} v_{\star,x} \\ v_{\star,xx} \end{pmatrix} .
    \end{align}
    Rewriting this for the first part $z_1(\cdot)\in \R^2$  of $z=(z_1,z_2)^{\top}$ shows $z_1 \in H^2_{\mathrm{loc}}(\R) $
    and $Lz_1 = v_{\star,x}$. Finally, the exponential estimates \eqref{eq3:sderiv} and \eqref{eq3:sderiv2} imply exponential decay of
    the $s$-derivatives $\partial_s V_{\st}^+(0,\cdot), \partial_sV_{\un}^-(0,\cdot)$ and then of $z$ and its first derivative
    $\partial_xz$ due to \eqref{eq3:inhomvar}.
    Thus we find that $z_1 \in H^2(\R)$ is a generalized eigenfunction of $L$ that belongs to the eigenvalue $s=0$. This
    contradicts Assumption \ref{A4}.
    
      Now we can normalize  $\psi_0$ such that 
    $\psi_0^{\top} \Phi'(0) v_0 = 1$ holds. Then Keldysh's Theorem (see e.g. \cite[Theorem 2.4]{Beyn12}) gives a representation of the inverse $\Phi(s)^{-1}$ near $s=0$. There exists $\delta >0$ and an analytic function $\Gamma: B_\delta(0) \rightarrow \C^{4,4}$
    such that
\begin{align} \label{newPhi-1}
  \Phi(s)^{-1} = \frac{1}{s} v_0 \psi_0^{\top} + \Gamma(s) \quad \forall s \in B_\delta(0) \setminus \{ 0 \}.
\end{align}
Using \eqref{eq3:inhomvar} and integration by parts the normalization is rewritten as follows:
  \begin{align*}
    & -(\psi,\partial_s M(0,\cdot)\varphi)_{L^2} \\
    & =-(\psi,\partial_s M(0,\cdot)V_{\st}^+(0,\cdot)v^+)_{L^2(\R_+)}-
    (\psi,\partial_s M(0,\cdot)V_{\un}^-(0,\cdot)v^-)_{L^2(\R_-)}\\
    & = - (\psi,(\partial_x-M(0,\cdot))\partial_sV_{\st}^+(0,\cdot)v^+ ) _{L^2(\R_+)} \\
    &  \quad - (\psi,(\partial_x-M(0,\cdot))\partial_sV_{\un}^-(0,\cdot)v^- ) _{L^2(\R_-)}\\
    & = \psi(0)^{\top}\partial_sV_{\st}^+(0,0)v^+ - \psi(0)^{\top}\partial_sV_{\un}^-(0,0)v^- = \psi_0^{\top}\Phi'(0)v_0=1.
  \end{align*}
  
    {\bf Step 3: Construction of the solution}\\
      Let us define for $s\in B_{\delta}(0)$ two Green's functions via
\begin{align*}
	\G^-(s,x,y)& = \begin{cases} - \S(s,x,y) P_\un^-(s,y), & x < y \le 0, \\ 
	\S(s,x,y) P_\st^-(s,y), & y \le x \le 0, \end{cases} \\
	\G^+(s,x,y) &= \begin{cases} \S(s,x,y) P_\st^+(s,y), & 0 \le y \le x, \\ 
	- \S(s,x,y)P_{\c \un}^+(s,y) , & 0 \le x < y. \end{cases}
\end{align*}
So far, we have worked with $s \in B_{\delta}(0)$, but  from now on we restrict to $s\in \Omega_{\delta}\cup \{0\}$.
First consider particular solutions of the inhomogeneous equation \eqref{firstvar} on $\R_{\pm}$
  \begin{equation} \label{eq3:modz}
  \begin{aligned}
    z_+(s,x)& = V_{\st}^+(s,x) \zeta_+(s) + \int_{\R_+} \G^+(s,x,y) R(y) dy, \\
    z_-(s,x)& = -V_{\un}^+(s,x) \zeta_-(s) + \int_{\R_-} \G^-(s,x,y) R(y) dy,
  \end{aligned}
  \end{equation}
  where $V_{\st}^+,V_{\un}^-$ are given by \eqref{eq3:Vsol} and $\zeta_{\pm}(s) \in \C^2$ are determined to ensure continuity
  of $z(s,\cdot)=z_+(s,\cdot) \one_{\R_+}+ z_-(s,\cdot)\one_{\R_-}$ at $x=0$, i.e.
  \begin{align*}
    \Phi(s) \begin{pmatrix} \zeta_+(s) \\  \zeta_-(s) \end{pmatrix}= \gamma(s):= \int_{\R_-} \G^-(s,0,y) R(y) dy -\int_{\R_+} \G^+(s,0,y) R(y) dy.
  \end{align*}

  With \eqref{newPhi-1} we  write the solution for $s \in \Omega_{\delta}$ as
  \begin{equation} \label{eq3:zetarep}
    \begin{pmatrix} \zeta_+(s) \\  \zeta_-(s) \end{pmatrix}= \frac{1}{s}v_0 \psi_0^{\top}\gamma(0)+ \gamma_1(s), \quad
    \gamma_1(s)= \Gamma(s)\gamma(s) + v_0 \psi_0^{\top}\int_0^1 \gamma'(\tau s) d\tau.
  \end{equation}
  For the principal term we use $\psi_0^{\top}P_{\un}^-(0,0)=0=\psi_0^{\top}P_{\st}^+(0,0)$ and the definition of $\psi$:
  \begin{align*}
    \psi_0^{\top}\gamma(0) & = \psi_0^{\top} \Big( \int_{\R_-}S(0,0,y)P_{\st}^-(0,y)R(y)dy + \int_{\R_+}S(0,0,y)P_{\c \un}^+(0,y) R(y) dy\Big) \\
    & = \int_{\R_-}\psi_0^{\top}(P_{\st}^-(0,0)+ P_{\un}^-(0,0))S(0,0,y) R(y) dy\\
    & +\int_{\R_+}\psi_0^{\top}(P_{\st}^+(0,0)+ P_{\c \un}^+(0,0))S(0,0,y) R(y) dy\\
    & = \int_{\R_-}\psi(y)^{\top} R(y)dy + \int_{\R_+}\psi(y)^{\top}R(y)dy = ( \psi,R)_{L^2}.
  \end{align*}
  Inserting this into \eqref{eq3:modz} and using $\varphi(x)=S(0,x,0)\varphi_0= \begin{pmatrix}v_{\star,x} & v_{\star,xx} \end{pmatrix}^{\top}$ leads to the following expression for $x \in \R_+$
  \begin{equation} \label{eq3:z+express}
  \begin{aligned}
    z_+(s,x)& =\frac{1}{s} \varphi(x) ( \psi, R)_{L^2}+(\psi,R)_{L^2}  \int_0^1 \partial_s V_{\st}^+(\tau s,x)v^+ d\tau + V_{\st}^+(s,x) \gamma_1(s) \\
    & \quad + \int_0^x \S(s,x,y) P_\st^+(s,y) R(y) dy - \int_x^\infty \S(s,x,y) P_{\c\un}^+(s,y)  R(y)dy.
  \end{aligned}
  \end{equation}
  In a similar way we obtain for $x \in \R_-$
  \begin{equation} \label{eq3:z-express}
    \begin{aligned}
    z_-(s,x)& =\frac{1}{s} \varphi(x) ( \psi, R)_{L^2}+(\psi,R)_{L^2}  \int_0^1 \partial_s V_{\un}^-(\tau s,x)v^- d\tau  + V_{\un}^-(s,x) \gamma_1(s) \\
    & \quad + \int_{-\infty}^x \S(s,x,y) P_\st^-(s,y) R(y) dy - \int_x^0 \S(s,x,y) P_\un^-(s,y) R(y) dy.
    \end{aligned}
    \end{equation}
    Next we estimate $Q_k R = \varphi (\psi,R)_{L^2}$ and the expressions \eqref{eq3:z+express}, \eqref{eq3:z-express}. 
    
{\bf Step 4: Estimates for $s \in \Omega_{\delta}$} \\
We show \eqref{Yestnew2} and \eqref{Yestnew} for $z(s,\cdot)$ on $\R_+$ by estimating term by term in
\eqref{eq3:z+express}. The estimates of $z(s,\cdot)$ on $\R_-$ via  the terms in \eqref{eq3:z-express} are
somewhat simpler (because of the dichotomies) and are therefore omitted here. \\
For the first term in \eqref{eq3:z+express} we note that  $\|\varphi\|_{L^2_p} < \infty$ holds for all $p \ge 1$ due to the
exponential decay and $\|\eta^{-k}\psi\|_{L^2}< \infty$ due to \eqref{eq3:psiest}. This leads to 
  \begin{align} \label{eq3:psiR}
    \|Q_kR\|_{L^2_p}= \|\varphi\|_{L^2_p}|(\eta^{-k} \psi, \eta^k R)_{L^2}|\le \|\varphi\|_{L^2_p}\|\eta^{-k} \psi\|_{L^2}\| R\|_{L^2_k}
    \le C \|R\|_{L^2_k}.
  \end{align}
For the second term we have from \eqref{eq3:sderiv}  that $|\partial_sV_{\st}^+(s,x)| \le C e^{\alpha^+ x}$ holds for $x\ge 0$ and $|s| \le \delta$ yields
\begin{align*}
\begin{split}
	\int_0^\infty \eta^{2k}(x) & \Big| (\psi, R)_{L^2} \int_0^1 \partial_s V_\st^+(\tau s,x) v^+ d\tau \Big|^2 dx \\
	& \qquad \qquad \le C \int_0^\infty \eta^{2k}(x) e^{2 \alpha^+ x} dx \| R \|_{L^2_k}^2 \le C \| R \|_{L^2_k}^2.
\end{split}
\end{align*}
For the third term in \eqref{eq3:z+express} we first estimate $|\gamma_1(s)|$ as follows:
\begin{align*}
  |\gamma_1(s)| & \le C (|\gamma(s)| + \sup_{|\zeta|\le |s|}|\gamma'(\zeta)|) \\
  & \le \sup_{|\zeta|\le |s|}\int_{\R_-} \big(| \G^-| +|\partial_s\G^-|\big)(\zeta,0,y)|R(y)| dy\\
  & + \sup_{|\zeta|\le |s|}\int_{\R_+} \big( |\G^+|+|\partial_s\G^+|\big)(\zeta,0,y)|R(y)| dy \\
  & \le \int_\R \eta^{-2k}(y) dy \| R \|_{L^2_k} \le C \| R \|_{L^2_k},
\end{align*}
where we used that $\G^\pm(\zeta,0,y)$ and $\partial_s \G^\pm(\zeta,0,y)$ are uniformly bounded for $y \in \R_\pm$ and $|\zeta| \le \delta$; see \eqref{eq3:sderiv}, \eqref{eq3:sderiv2}. This gives for the third term in \eqref{eq3:z+express} the estimate
\begin{align} \label{3rdterm}
	\int_0^\infty \eta^{2k}(x) |V_\st^+(s,x) \gamma_1(s)|^2 dx \le C \int_0^\infty \eta^{2k}(x) e^{2 \alpha^+ x} |\gamma_1(s)|^2 dx \le C \| R \|_{L^2_k}^2. 
\end{align}
The fourth term in \eqref{eq3:z+express} involves the stable projector $\P_\st^+(s,\cdot)$. We use \eqref{eq3:est3}, the
Cauchy-Schwarz inequality, and Fubini's theorem, to obtain
\begin{align*}
 & \int_0^\infty \eta^{2k}(x) \Big| \int_0^x \S(s,x,y) P_\st^+(s,y) R(y) dy \Big|^2 dx \\
 & \le C \int_0^\infty \eta^{2k}(x) \Big( \int_0^x e^{\alpha^+ (x-y)} |R(y)| dy \Big)^2 dx \\
 & \le C \int_0^\infty \eta^{2k}(x) \Big( \int_0^x  \eta^{-2k}(y) e^{\alpha^+ (x-y)} dy \Big) \Big( \int_0^x e^{\alpha^+(x-y)} \eta^{2k}(y)|R(y)|^2 dy \Big) dx \\
 & \le C|\alpha^+|^{-2k} \int_0^\infty \int_0^x e^{\alpha^+(x-y)} \eta^{2k}(y) |R(y)|^2 dy dx \\
 & = C|\alpha^+|^{-2k} \int_0^\infty \Big( \int_y^\infty e^{\alpha^+(x-y)} dx \Big) \eta^{2k}(y) |R(y)|^2 dy \\
 & \le C|\alpha^+|^{-2k-1} \int_0^\infty \eta^{2k}(y) |R(y)|^2 dy \le C \| R \|_{L^2_k}^2.
\end{align*}
Next we estimate the last term in \eqref{eq3:z+express} which is the critical term since it involves the central projector through $P_{\c\un}^+(s,\cdot) = P_{\c}^+(s,\cdot) + P_{\un}^+(s,\cdot)$. We decompose it into
\begin{align*}
	\int_x^\infty \S(s,x,y) P_{\c\un}^+(s,y) R(y) dy = & \int_x^\infty \S(s,x,y) P_{\c\un}^+(s,y) Q_k R(y) dy \\
	& + \int_x^\infty \S(s,x,y) P_{\c\un}^+(s,y) (I - Q_k) R(y) dy.
\end{align*}
Since $Q_k R \in L^2_{p}$ for all $p \ge 1$ we obtain by using \eqref{eq3:est1} with $q = 1$ and $2k + 1 +2\mu$ instead of $k$, Cauchy-Schwarz inequality, and Fubini's theorem
\begin{equation} \label{uncriticalterm}
\begin{aligned}
 & \int_0^\infty \eta^{2k}(x) \Big| \int_x^\infty \S(s,x,y) P_{\c\un}^+(s,y) Q_k R(y) dy \Big|^2 dx \\
 & \le C \int_0^\infty \eta^{2k}(x) \Big( \int_x^\infty e^{\nu(s)(x-y)} |Q_k R(y)| dy \Big)^2 dx \\
 & \le C \int_0^\infty \eta^{2k}(x) \Big( \int_x^\infty  \eta^{-2(k + 1 + \mu)}(y) e^{2 \nu(s) (x-y)} dy \Big) \\
 & \qquad \qquad \qquad \qquad \qquad \Big( \int_x^\infty \eta^{2(k+1+\mu)}(y)|Q_k R(y)|^2 dy \Big) dx \\
 & \le C \int_0^\infty \eta^{-1-2\mu}(x) \int_x^\infty \eta^{2(k+1+\mu)}(y) |Q_k R(y)|^2 dy dx \\
 & \le C \int_0^\infty \Big( \int_0^y \eta^{-1-2\mu}(x) dx \Big) \eta^{2(k+1+\mu)}(y) |Q_k R(y)|^2 dy \\
 & \le C \| Q_k R \|_{L^2_{k+1+\mu}}^2 \le C \| Q_k R \|_{L^2_{k}}^2.
\end{aligned}
\end{equation}
If $R \in L^2_{k+1+\mu}$ we can replace $Q_k R$ by $(I-Q_k) R$ in \eqref{uncriticalterm} and obtain
\begin{equation} \label{criticalestimate}
\begin{aligned}
	\int_0^\infty \eta^{2k}(x) \Big| \int_x^\infty \S(s,x,y) P_{\c\un}^+(s,y) (I - Q_k) & R(y) dy \Big|^2 dx \\
	& \le C \| (I - Q_k) R \|_{L^2_{k+1+\mu}}^2.
\end{aligned}
\end{equation}
It is here that we need the stronger norm $\| (I - Q_k) R \|_{L^2_{k + 1 + \mu}}$. Collecting the estimates above  we infer \eqref{Yestnew} for $z(s,\cdot)$ on $\R_+$ in
case $R \in L^2_{k+1+\mu}$.
If we have only $R \in L^2_{k+q}$ with $0\le q <1$ then  we modify the estimate of the critical term from \eqref{criticalestimate} by using \eqref{eq3:est1}, \eqref{eq3:est4}, and \eqref{trichoplus}:
\begin{align*}
 & \int_0^\infty \eta^{2k}(x) \Big| \int_x^\infty \S(s,x,y) P_{\c\un}^+(s,y) (I - Q_k) R(y) dy \Big|^2 dx \\
 & \le C \int_0^\infty \eta^{2k}(x) \Big( \int_x^\infty e^{\nu(s)(x-y)} |(I - Q_k) R(y)| dy \Big)^2 dx \\
 & \le C \int_0^\infty \eta^{2k}(x) \Big( \int_x^\infty  \eta^{-2(k + q)}(y) e^{\nu(s) (x-y)} dy\Big) \\
 & \qquad \qquad \qquad \qquad \qquad \Big( \int_x^\infty e^{\nu(s)(x-y)} \eta^{2(k+q)}(y)|(I - Q_k) R(y)|^2 dy \Big) dx \\
 & \le C|\nu(s)|^{q-1} \int_0^\infty \eta^{-q}(x) \int_x^\infty e^{\nu(s)(x-y)} \eta^{2(k+q)}(y) |(I - Q_k) R(y)|^2 dy dx \\
 & \le C|\nu(s)|^{q-1} \int_0^\infty \Big( \int_0^y \eta^{-q}(x) e^{\nu(s)(x-y)} dx\Big) \eta^{2(k+q)}(y) |(I - Q_k) R(y)|^2 dy \\
 & \le C|\nu(s)|^{2q-2} \int_0^\infty \eta^{2(k+q)}(y) |(I - Q_k) R(y)|^2 dy \le C |s|^{4q - 4} \| (I - Q_k) R \|_{L^2_{k+q}}^2,
\end{align*}
where the last inequality uses the estimate $|s|^2 \le C |\lambda(s)|^2 \le C \nu(s)$ from Lemma \ref{dichomatrices} (ii) b), c) for $s \in \Omega_\delta$. Together with the estimates above this proves \eqref{Yestnew2} for $z(s,\cdot)$ on $\R_+$ in case $R \in L^2_{k+q}$. Combining this  with the corresponding estimates on $\R_-$ proves our assertions \eqref{Yestnew2} and \eqref{Yestnew} for $s \in \Omega_{\delta}$.

{\bf Step 5: The case $s=0$} \\
Let us reconsider steps 3 \& 4 in case $(\psi,R)_{L^2}=0$ with $R \in L^2_{k+1+\mu}$ and let $s=0$. Then we have $\psi_0^{\top}\gamma(0)=0$ and
\eqref{eq3:zetarep} shows that $\begin{pmatrix} \zeta_+(s) \\ \zeta_-(s) \end{pmatrix}=\gamma_1(s)$
is continuous at $s=0$. Moreover, the term $(\psi,R)_{L^2}$ drops out from the definition of $z_{\pm}$ in
\eqref{eq3:z+express}, \eqref{eq3:z-express} and the subsequent estimates \eqref{3rdterm}-\eqref{criticalestimate} still hold for $s = 0$ and $\nu(0)=0$. Continuity of the integral expressions \eqref{eq3:z+express}, \eqref{eq3:z-express} at $s=0$
follows from the uniform bounds \eqref{Yestnew} and the pointwise convergence of the integrands via Lebesgue's theorem.
\end{proof}

As a consequence of Proposition  \ref{propEV} we obtain an estimate for the solution of the resolvent equation \eqref{poly:resolvGl}  in 
 a sufficiently small $\delta$-crescent $\Omega_\delta$.

\begin{corollary} \label{cor213}
  Let Assumption \ref{A1}-\ref{A4} be satisfied and let $k \ge 1$. Then there exist $\delta > 0$ and for every $0 \le q < 1$, $\mu>0$ a constant $C=C(k,q,\mu)$ such that the equation $(sI-L)v = r$ has a unique solution $v(s) \in H^2_{k}$ for all $s \in \Omega_\delta $ and $r \in L^2_{k}$. This solution satisfies the estimates
\begin{alignat}{2}
\| v(s) \|_{H^1_k} & \le C\big(\frac{1}{|s|}\|P_kr\|_{L^2_{k}}+ |s|^{2q -2}\|(I-P_k) r \|_{L^2_{k+q}}\big) \quad && \text{if } r \in L^2_{k+q}. \label{eq3:vsingest2} \\
	\| v(s) \|_{H^1_k} & \le C\big(\frac{1}{|s|}\|P_kr\|_{L^2_{k}}+ \|(I-P_k) r \|_{L^2_{k+1+\mu}}\big) \quad && \text{if } r \in L^2_{k+1+\mu}. \label{eq3:vsingest}
\end{alignat}
Here $P_k: L_{k}^2 \to \ker(L)$ is the projector given by
\begin{align*}
  P_k r= v_{\star,x}(\psi_2,r)_{L^2}
\end{align*}
and $\psi_2$ is the unique bounded solution of the adjoint system
\begin{align} \label{eq3:adeq}
  L^{\star}\psi_2= (A^{\top}\partial_x^2 - c \partial_x + S_{\omega}^{\top}+Df(v_{\star})^{\top})\psi_2 = 0, \quad
  (\psi_2,v_{\star,x})_{L^2} = 1.
\end{align}
If $r \in L^2_{k+1+\mu}$ with $P_kr=0$ then $v(s)$, $s \in \Omega_{\delta}$ can be continuously extended by some $v(0)\in H_{k}^2$ which solves $-Lv(0)=r$ and which
satisfies the estimate $\|v(0)\|_{H^1_k} \le C \|r\|_{L^2_{k+1+\mu}}$.
\end{corollary}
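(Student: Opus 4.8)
The plan is to deduce the corollary from Proposition \ref{propEV} by passing to the equivalent first order system \eqref{firstvar} and by splitting $r=P_kr+(I-P_k)r$ \emph{before} applying the proposition. The first task is to set up the adjoint datum $\psi_2$. Writing $\psi=(\psi^{(1)},\psi^{(2)})^{\top}$ for a bounded solution of $(\partial_x+M(0,\cdot)^{\top})\psi=0$, elimination of $\psi^{(1)}$ shows that $\phi:=A^{-\top}\psi^{(2)}$ is a bounded solution of $L^{\star}\phi=0$, and conversely $\psi_2\mapsto(\,-c\psi_2+A^{\top}\partial_x\psi_2,\,-A^{\top}\psi_2)^{\top}$ inverts this correspondence; since the space of bounded solutions of the adjoint first order system is one-dimensional (Step 2 in the proof of Proposition \ref{propEV}), so is that of $L^{\star}\phi=0$. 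Because $\partial_sM(0,\cdot)=\left(\begin{smallmatrix}0&0\\ A^{-1}&0\end{smallmatrix}\right)$, the normalization $-(\psi,\partial_sM(0,\cdot)\varphi)_{L^2}=1$ used in Proposition \ref{propEV} reads $-(A^{-\top}\psi^{(2)},v_{\star,x})_{L^2}=1$, so $\psi_2:=-A^{-\top}\psi^{(2)}$ is precisely the unique bounded solution of \eqref{eq3:adeq}; in particular $(\psi_2,v_{\star,x})_{L^2}=1$ (this uses Assumption \ref{A4} via Step 2 of the proof of Proposition \ref{propEV}, which rules out $(\psi_2,v_{\star,x})_{L^2}=0$), $P_k$ is a genuine projection, and for $R=(0,-A^{-1}r)^{\top}$ one gets $(\psi,R)_{L^2}=(\psi_2,r)_{L^2}$, hence $Q_kR=\varphi\,(\psi_2,r)_{L^2}$ whose first component is $P_kr$.

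With this in hand the construction is immediate. The kernel part is solved explicitly: since $Lv_{\star,x}=0$, the function $v_1(s):=\tfrac{1}{s}P_kr$ solves $(sI-L)v_1=P_kr$, lies in $H^2_k$, and $\|v_1(s)\|_{H^1_k}=\tfrac{1}{|s|}\|P_kr\|_{H^1_k}\le\tfrac{C}{|s|}\|P_kr\|_{L^2_k}$ by the exponential localization of $v_{\star,x}$. For the complement set $\tilde r:=(I-P_k)r$, so that $(\psi_2,\tilde r)_{L^2}=0$ and $\|\tilde r\|_{L^2_\ell}\le C\|r\|_{L^2_\ell}$ for every $\ell\ge k$, and put $\tilde R:=(0,-A^{-1}\tilde r)^{\top}$. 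Then $Q_k\tilde R=\varphi\,(\psi_2,\tilde r)_{L^2}=0$, hence $(I-Q_k)\tilde R=\tilde R$, and Proposition \ref{propEV} yields the unique $\tilde z(s)\in H^1_k$ solving $\M(s)\tilde z=\tilde R$ for $s\in\Omega_\delta$ with
\begin{equation*}
\|\tilde z(s)\|_{L^2_k}\le C|s|^{2q-2}\|\tilde R\|_{L^2_{k+q}}\le C|s|^{2q-2}\|(I-P_k)r\|_{L^2_{k+q}}\quad\text{if }r\in L^2_{k+q},
\end{equation*}
and $\|\tilde z(s)\|_{L^2_k}\le C\|(I-P_k)r\|_{L^2_{k+1+\mu}}$ if $r\in L^2_{k+1+\mu}$. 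Since the first component of \eqref{firstvar} gives $\tilde z_{1,x}=\tilde z_2$, the function $v_2(s):=\tilde z_1(s)$ satisfies $v_{2,x}=\tilde z_2\in H^1_k$, hence $v_2(s)\in H^2_k$, solves $(sI-L)v_2=\tilde r$, and $\|v_2(s)\|_{H^1_k}=\|\tilde z(s)\|_{L^2_k}$.

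It remains to assemble the pieces. The function $v(s):=v_1(s)+v_2(s)\in H^2_k$ solves $(sI-L)v=r$, and the two bounds above combine via the triangle inequality to \eqref{eq3:vsingest2} and \eqref{eq3:vsingest}; uniqueness of $v(s)$ in $H^2_k$ follows since $\M(s)$ has only the zero solution in $H^1_k$ for right hand side $0$ (Proposition \ref{propEV} with $R=0$), so that $sI-L$ is injective on $H^2_k$. If finally $P_kr=0$, then $\tilde r=r$, $Q_k\tilde R=0$, and the last assertion of Proposition \ref{propEV} extends $\tilde z(\cdot)$ continuously by some $\tilde z(0)\in H^1_k$ with $\M(0)\tilde z(0)=\tilde R$ and $\|\tilde z(0)\|_{L^2_k}\le C\|\tilde R\|_{L^2_{k+1+\mu}}\le C\|r\|_{L^2_{k+1+\mu}}$; then $v(0):=\tilde z_1(0)\in H^2_k$ solves $-Lv(0)=r$, is the continuous extension of $v(\cdot)$ at $s=0$, and satisfies $\|v(0)\|_{H^1_k}\le C\|r\|_{L^2_{k+1+\mu}}$.

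The main obstacle is the adjoint identification of the first paragraph — transposing $M(0,\cdot)$, recognizing $A^{-\top}\psi^{(2)}$ as the bounded null solution of $L^{\star}$, and matching the two normalizations with the correct sign — since this is what turns $Q_k$ (living on the first order side) into $P_k$ (on the second order side) and, crucially, forces $Q_k\tilde R=0$ for $\tilde r=(I-P_k)r$. Everything after that is bookkeeping: the explicit kernel solution $v_1=\tfrac1s P_kr$ carries the singular $\tfrac1{|s|}$-contribution, and the remaining estimates are a direct transcription of \eqref{Yestnew2} and \eqref{Yestnew} using $\|v\|_{H^1_k}=\|z\|_{L^2_k}$ and the exponential localization of $v_{\star,x}$, $v_{\star,xx}$.
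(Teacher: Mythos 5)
Your proof is correct, and it is essentially the same argument as the paper's, differing in one organizational step that is worth noting. The adjoint identification in your first paragraph (elimination of $\psi^{(1)}$, $\phi = A^{-\top}\psi^{(2)}$ solving $L^{\star}\phi=0$, and the sign-consistent matching of the two normalizations via $\partial_s M(0,\cdot)=\left(\begin{smallmatrix}0&0\\A^{-1}&0\end{smallmatrix}\right)$) is exactly what the paper does, and you correctly extract the consequences $(\psi,R)_{L^2}=(\psi_2,r)_{L^2}$ and $Q_kR=\varphi(\psi_2,r)_{L^2}$. Where you diverge is in how the estimate is transferred: the paper applies Proposition \ref{propEV} directly to $R=(0,-A^{-1}r)^{\top}$ and then converts the resulting bound into the stated one; this requires noticing that $(I-Q_k)R$ is \emph{not} simply $(0,-A^{-1}(I-P_k)r)^{\top}$ but differs by an exponentially localized term proportional to $(\psi_2,r)_{L^2}$, which must then be absorbed into the $\tfrac{1}{|s|}\|P_kr\|_{L^2_k}$ contribution using $|s|\le\delta$ (the paper leaves this absorption implicit in the terse remark about $\|P_kr\|_{L^p}\le C\|r\|_{L^2_k}$). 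You instead split $r=P_kr+(I-P_k)r$ \emph{before} passing to the first-order system, solve the range-of-$P_k$ part explicitly by $v_1(s)=\tfrac{1}{s}P_kr$ (using $Lv_{\star,x}=0$), and apply the proposition only to $\tilde r=(I-P_k)r$, for which $Q_k\tilde R=\varphi(\psi_2,\tilde r)_{L^2}=0$ by construction, so the singular term drops out identically rather than needing to be absorbed. Both routes give the corollary with the same constants up to rescaling; yours is slightly cleaner in that it makes the kernel/complement separation explicit and dispenses with the absorption argument, at the minor cost of having to record that $\|\tilde r\|_{L^2_\ell}\le C\|r\|_{L^2_\ell}$ and $\|P_kr\|_{H^1_k}\le C\|P_kr\|_{L^2_k}$, both of which you do.
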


\begin{proof}
  By a  straightforward calculation we find that the equation
  \begin{align*}
    0& = (\partial_x + M(0,\cdot)^{\top})
    \begin{pmatrix} \psi_1 \\ A^{\top}\psi_2 \end{pmatrix}=
    \begin{pmatrix} \partial_x \psi_1 - (S_{\omega}^{\top}+Df(v_{\star})^{\top})\psi_2 \\
      A^{\top} \partial_x \psi_2 + \psi_1 - c \psi_2 \end{pmatrix}
  \end{align*}
  holds if and only if $L^{\star}\psi_2=0$. Setting $\psi=-\begin{pmatrix} \psi_1 \\ A^{\top}\psi_2 \end{pmatrix}$  and
  taking \eqref{firstvar} into account yields
  \begin{align*}
    Q_k R =\begin{pmatrix}v_{\star,x} \\ v_{\star,xx} \end{pmatrix}\Big( \begin{pmatrix}- \psi_1\\ -A^{\top}\psi_2 \end{pmatrix}, \begin{pmatrix} 0 \\- A^{-1} r \end{pmatrix}\Big)_{L^2}
    =(\psi_2,r)_{L_2}\begin{pmatrix}v_{\star,x} \\ v_{\star,xx} \end{pmatrix},
  \end{align*}
  where $P_kr$ appears in the first component. By \eqref{eq3:adjoint} the normalization is 
  \begin{align*}
    1 = - (\psi,\partial_sM(0,\cdot)\varphi)_{L^2}= (A^{\top}\psi_2, A^{-1}v_{\star,x})_{L^2} = (\psi_2,v_{\star,x})_{L^2}.
  \end{align*}
  Hence, Proposition \ref{propEV} shows that \eqref{eq3:adeq} has a unique 
  bounded solution. Further,
  the setting $z(s)=\left( \begin{smallmatrix} v(s)\\ \partial_xv(s) \end{smallmatrix} \right)$
  transfers the estimate \eqref{Yestnew} to our assertion \eqref{eq3:vsingest}. This one uses the estimate of
  the projector $P_k$ which is analogus to \eqref{eq3:psiR}:
  \begin{align*}
    \|P_k r \|_{L^p}\le C(p,k) \|r \|_{L^2_k} \quad \forall r \in L^2_k, k, p \ge 1.
    \end{align*}
    This completes the proof.
\end{proof}

Corollary \ref{cor213} suggests to introduce the following subspaces for $k,\ell \in \N$
\begin{align*}
	X_{k} := (I - P_k)(L^2_k), \quad X^\ell_k := X_k \cap H^\ell.
\end{align*}
This leads to the decompositions $L^2_k = \ker(L) \oplus X_k$ and $H^2_k = \ker(L) \oplus X^2_k$. Further, $L(X^2_k) \subset X_k$ holds since for $v \in X^2_k$ we have $Lv \in L^2_k$ and
\begin{align*}
	P_k L(I - P_k) v = P_k Lv = (\psi_2, Lv)_{L^2} v_{\star,x} = (L^* \psi_2, v)_{L^2} v_{\star,x} = 0.
\end{align*}

\begin{remark} \label{LFred}
In fact, one can show that the operator $L:\D(L) \subset L^2_k \rightarrow L^2_{k+1+\mu}$ with domain $\D(L) = \{ u \in H^1_k \cap H^2_{\mathrm{loc}}: L u \in L^2_{k+1+\mu} \}$ is a closed operator from $L^2_k$ to $L^2_{k+1+\mu}$ and Fredholm of index $0$; cf. \cite[Sec. 5.3]{Doeding}. Taking the subspaces $X_k$ into account then implies that the operator $L: \D(L) \cap X_k \subset X_k \rightarrow X_{k+1+\mu}$ is invertible.
\end{remark}

\sect{Semigroup estimates}

In this section we use the estimates from Corollary \ref{cor213} to construct and prove algebraic decay of the semigroup generated by $L$. From Lemma \ref{poly:aprioriest} and Theorem \ref{Thm:essential} we conclude that the operator $L$ from \eqref{LatL} lies in
$\clos[L^2_k]$, is a sectorial operator, and generates an analytic semigroup denoted by $\{ e^{tL} \}_{t \ge 0} $. Moreover,
Assumption \ref{A4} guarantees the spectral
bound at zero, hence  there exists $\Cexp \ge 1$ such that for $\ell = 0,1$ the following estimates hold
\begin{align} \label{semiexpest}
	\| e^{tL} u \|_{H^\ell_k} \le \Cexp e^{t} \| u \|_{H^\ell_k}, \quad \| e^{tL} u \|_{H^1_k} \le \Cexp t^{-\frac{1}{2}}e^{t}\| u \|_{L^2_k}.
\end{align}
Note that \eqref{semiexpest} is not sharp for large $t \ge 0$ in the sense that in fact we have the bounds $\| e^{tL} u \|_{H^\ell_k} \le C(\beta) e^{\beta t} \| u \|_{H^\ell_k}$ and $\| e^{tL} u \|_{H^1_k} \le C(\beta) t^{-\frac{1}{2}}e^{\beta t}\| u \|_{L^2_k}$ for arbitrary $\beta > 0$. However, for simplicity we chose $\beta = 1$ so that $\Cexp = C(1)$ is fixed. \\
The essential spectrum of $L$ touching  the imaginary axis at the origin prevents the semigroup to have exponential decay.
Instead, we use the sharp resolvent estimates  of the previous sections to show  algebraic decay of the semigroup for
initial data from a proper subspace.

\begin{theorem} \label{poly4:semigroup}
Let Assumption \ref{A1}-\ref{A4} be satisfied and $k, m \ge 1$ and $\mu > 0$. Then there exists $\Calg = \Calg (k, m,\mu) \ge 1$ such that the linearized operator $L: H^2_k \rightarrow L^2_k$ generates an analytic semigroup $\{ e^{t L}\}_{t \ge 0}$ on $L^2_k$ that satisfies for all $v \in X_{k+m+\mu}$ and $\ell = 0,1$ the estimates
\begin{align}
	 & \| e^{t L} v \|_{H^\ell_k} \le \Calg (1+t)^{-\frac{m^*}{2}} \| v \|_{H^\ell_{k+m+\mu}}, \quad \| e^{tL} v \|_{H^1_k} \le \Calg t^{-\frac{m^*}{2}} \| v \|_{L^2_{k+m+\mu}} \label{poly:semi}
\end{align}
where $m^* = \lfloor m \rfloor + \max(0,2q-1)$ for $m = \lfloor m \rfloor + q$, $0 \le q < 1$.
\end{theorem}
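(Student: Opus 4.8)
The plan is to represent the semigroup by a Dunford integral and to read off the algebraic rate from a Watson–type estimate on a parabolic contour through the origin. For $v\in X_{k+m+\mu}$ write
\begin{equation*}
  e^{tL}v=\frac{1}{2\pi i}\int_{\Gamma}e^{st}(sI-L)^{-1}v\,ds,
\end{equation*}
where the Bromwich line has been deformed to a contour $\Gamma$ which follows $\partial\Omega_\delta$ near the origin — so that $\Re s\le-\kappa(\Im s)^2$ and $|s|\asymp|\Im s|$ there — and then opens into the open left half–plane along rays $\arg s=\pm(\tfrac{\pi}{2}+\varepsilon')$ with $\varepsilon'\le\varepsilon_0$ small. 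Admissibility of this deformation follows from the sectorial a priori bound of Lemma \ref{poly:aprioriest} for $|s|$ large, from Theorem \ref{Thm:essential} and Assumption \ref{A4} which keep $\sigma_{\ess}(L)$ and $\sigma_{\pt}(L)$ off the contour for $|s|$ in a bounded range, and from Corollary \ref{cor213} on $\Omega_\delta$; a disc $B_\varepsilon(0)$ is excised and $\varepsilon\to0$ at the end, the small arc being negligible because of the resolvent bound. On the far part $\Gamma_2:=\Gamma\setminus B_{\delta_1}(0)$ one has $\Re s\le-c<0$, so that contribution is bounded by $Ce^{-ct}\|v\|_{L^2_k}$ and is harmless; everything happens on the parabolic piece $\Gamma_1:=\Gamma\cap B_{\delta_1}(0)$.

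On $\Gamma_1$ the bounds of Corollary \ref{cor213} become useful only after trading the weight decay of $v$ for powers of $s$. Since $L:\D(L)\cap X_j\to X_{j+1+\mu'}$ is invertible for every $\mu'>0$ (Remark \ref{LFred}), we fix an integer $N\le\lfloor m\rfloor$ and small $\mu_1,\dots,\mu_N>0$ with $\mu_1+\cdots+\mu_N<\mu$ and put $v_N:=L^{-1}\!\cdots L^{-1}v$ ($N$ factors), which lies in $X_{k+m+\mu-N-(\mu_1+\cdots+\mu_N)}$; by construction $P_kv_N=0$, all intermediate weights stay $\ge k\ge1$, and the norm of $v_N$ in that weighted $L^2$–space is $\le C\|v\|_{L^2_{k+m+\mu}}$. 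Using the operator identity
\begin{equation*}
  (sI-L)^{-1}L^N=s^N(sI-L)^{-1}-\sum_{j=0}^{N-1}s^jL^{N-1-j},
\end{equation*}
together with $v=L^Nv_N$ and $\int_\Gamma e^{st}s^j\,ds=0$ for $j\ge0$, $t>0$ (the integrand is entire and $\Gamma$ closes at $\infty$ in the left half–plane), we obtain
\begin{equation*}
  e^{tL}v=\frac{1}{2\pi i}\int_{\Gamma}e^{st}s^N(sI-L)^{-1}v_N\,ds.
\end{equation*}
Because $P_kv_N=0$, the singular $|s|^{-1}$–term of Corollary \ref{cor213} drops out, and we choose $N$ according to the position of $q=m-\lfloor m\rfloor$: if $q\ge\tfrac12$ take $N=\lfloor m\rfloor$ and apply \eqref{eq3:vsingest2} with parameter $q'':=q+\mu-(\mu_1+\cdots+\mu_N)\in(\tfrac12,1)$ — respectively \eqref{eq3:vsingest} if this weight exceeds $k+1$ — giving $\|s^N(sI-L)^{-1}v_N\|_{H^1_k}\le C|s|^{\lfloor m\rfloor+2q''-2}\|v\|_{L^2_{k+m+\mu}}$; if $q<\tfrac12$ take instead $N=\lfloor m\rfloor-1$, so that $v_N\in X_{k+1+\mu_0}$ for some $\mu_0>0$, and apply the uniform estimate \eqref{eq3:vsingest} to get $\|s^N(sI-L)^{-1}v_N\|_{H^1_k}\le C|s|^{\lfloor m\rfloor-1}\|v\|_{L^2_{k+m+\mu}}$.

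Denote the resulting exponent by $a$, so $a=\lfloor m\rfloor+2q''-2>-1$ in the first case (note $q''>\tfrac12$) and $a=\lfloor m\rfloor-1\ge0$ in the second. Parametrizing $\Gamma_1$ by $s=s(\tau)$ with $\Re s(\tau)\le-\kappa\tau^2$, $|s(\tau)|\asymp|\tau|$, $|ds|\asymp d\tau$, a Watson–lemma estimate gives
\begin{equation*}
  \Big\|\int_{\Gamma_1}e^{st}s^N(sI-L)^{-1}v_N\,ds\Big\|_{H^1_k}\le C\|v\|_{L^2_{k+m+\mu}}\int_{-\delta_1}^{\delta_1}e^{-\kappa t\tau^2}|\tau|^a\,d\tau\le C(1+t)^{-\frac{a+1}{2}}\|v\|_{L^2_{k+m+\mu}}.
\end{equation*}
In the first case $a+1=\lfloor m\rfloor+2q''-1\ge\lfloor m\rfloor+2q-1=m^*$ (choosing the $\mu_j$ small enough), and in the second $a+1=\lfloor m\rfloor=m^*$; hence, combined with the exponentially small $\Gamma_2$–contribution, $\|e^{tL}v\|_{H^1_k}\le C(1+t)^{-m^*/2}\|v\|_{L^2_{k+m+\mu}}$ for $t\ge1$. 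For $0\le t\le1$ the short–time bounds \eqref{semiexpest}, together with $\|v\|_{H^\ell_k}\le\|v\|_{H^\ell_{k+m+\mu}}$ and $t^{-1/2}\le t^{-m^*/2}$ (valid since $m^*\ge1$), yield the remaining two estimates of \eqref{poly:semi} for $\ell=0,1$.

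I expect the middle step to be the main obstacle: one must check that the weights never drop below $1$ so that each application of $L^{-1}$ is legitimate, that the polynomial–in–$s$ terms really integrate to zero along the chosen contour, and above all that the choice of $N$ and of the exponent in Corollary \ref{cor213} — governed by $\lfloor m\rfloor$ and by whether $q$ lies below or above $\tfrac12$ — makes the Watson exponent come out exactly as $m^*$. It is precisely this two–regime bookkeeping that produces the formula $m^*=\lfloor m\rfloor+\max(0,2q-1)$, and keeping the constants and the small parameters $\mu_j,\mu_0,\varepsilon,\delta_1$ mutually consistent is the delicate part.
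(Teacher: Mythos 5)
Your argument is correct, and it is genuinely different from the paper's.

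The paper proves the case $m=1$ by a single contour estimate over $\Sigma_\delta\cup\Gamma_\pm$ using the $s$-uniform bound \eqref{eq3:vsingest}, obtaining $t^{-1/2}$; it then gets integer $m$ by composing semigroups, $e^{tL}=\bigl(e^{(t/m)L}\bigr)^m$, paying a weight $1+\mu/m$ per factor; and it obtains general $m=p+q$ by appending one further contour estimate with the $q$-dependent resolvent bound \eqref{eq3:vsingest2} in case $q>\tfrac12$. You instead \emph{precondition the initial vector}: using the invertibility of $L:\D(L)\cap X_j\to X_{j+1+\mu'}$ from Remark \ref{LFred} you write $v=L^Nv_N$ with $v_N$ still in a weighted subspace and $P_kv_N=0$, then use the algebraic identity $(sI-L)^{-1}L^N=s^N(sI-L)^{-1}-\sum_{j=0}^{N-1}s^jL^{N-1-j}$ together with $\int_\Gamma e^{st}s^j\,ds=0$ to push the power $s^N$ into a single Dunford integral, which is then finished by a Watson estimate on the parabolic arc. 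This unifies the integer and fractional cases into one contour estimate and makes the source of the exponent $m^*=\lfloor m\rfloor+\max(0,2q-1)$ particularly transparent: $\lfloor m\rfloor$ (or $\lfloor m\rfloor-1$) powers of $s$ come from $L^{-N}$, and the residual $2q''-2$ comes from a single application of \eqref{eq3:vsingest2}. The paper's route avoids the operator identity and the iterated $L^{-1}$ bookkeeping, at the cost of the two-step ``integer iteration plus fractional correction'' scheme; your route relies on Remark \ref{LFred} (a side remark the paper itself does not invoke in its proof) and needs care that the intermediate weights stay above $1$ and that $q''$ stays in the admissible range for \eqref{eq3:vsingest2}, both of which you check. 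A small point worth making explicit in a final write-up is that $q''\ge q$ holds automatically from $\sum\mu_j<\mu$, so ``choosing the $\mu_j$ small enough'' is not actually needed for $a+1\ge m^*$; the only genuine constraint on the $\mu_j$ is $q''<1$ when you invoke \eqref{eq3:vsingest2}.
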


\begin{proof} 
We first prove \eqref{poly:semi} for the case $m = 1$: Define oriented contours $\Gamma_\pm$ running upwards and $\Gamma_\delta$ running counterclockwise in the complex plane (see Figure \ref{figure-contours} (left)).
\begin{align*}
	& \Gamma_+ := \{ z + \tau e^{i(\frac{\pi}{2} + \theta_0)}: \tau \ge 0 \}, \\
	& \Gamma_- := \{ \overline{z} + \tau e^{-i(\frac{\pi}{2} + \theta_0)}:  \tau \le 0 \}, \\
	& \Gamma_\delta := \{ s \in \Omega_\delta: |s| = \delta \}
\end{align*}
where $z := \mathrm{argmin} \{ \Re s: s \in \Omega_\delta, \, \Im s \ge 0 \}$ and $\theta_0 < \varepsilon_0$ with $\varepsilon_0$ from Lemma \ref{poly:aprioriest}. If we choose $\delta$ sufficiently small we have
\begin{align*}
	z = - \kappa R^2 + i R, \quad R^2 = - \frac{1}{2\kappa^2} + \sqrt{ \frac{1}{4\kappa^4} + \frac{\delta^2}{\kappa^2} }.
\end{align*}
By further taking $\delta$ and $\theta_0$ sufficiently small, we can ensure, by Assumption \ref{A4} and Theorem \ref{Thm:essential}, that the concatenated contour $\Gamma := \Gamma_- \cup \Gamma_\delta \cup \Gamma_+$ satisfies $\Gamma \subset \mathrm{res}(L)$ for $L \in \clos[L^2_k]$ and that there is no spectrum of $L$ to the right of $\Gamma$ in the complex plane.

\begin{figure}[h!]
\centering
\begin{minipage}[t]{0.45\textwidth}
\centering
\includegraphics[scale=0.35]{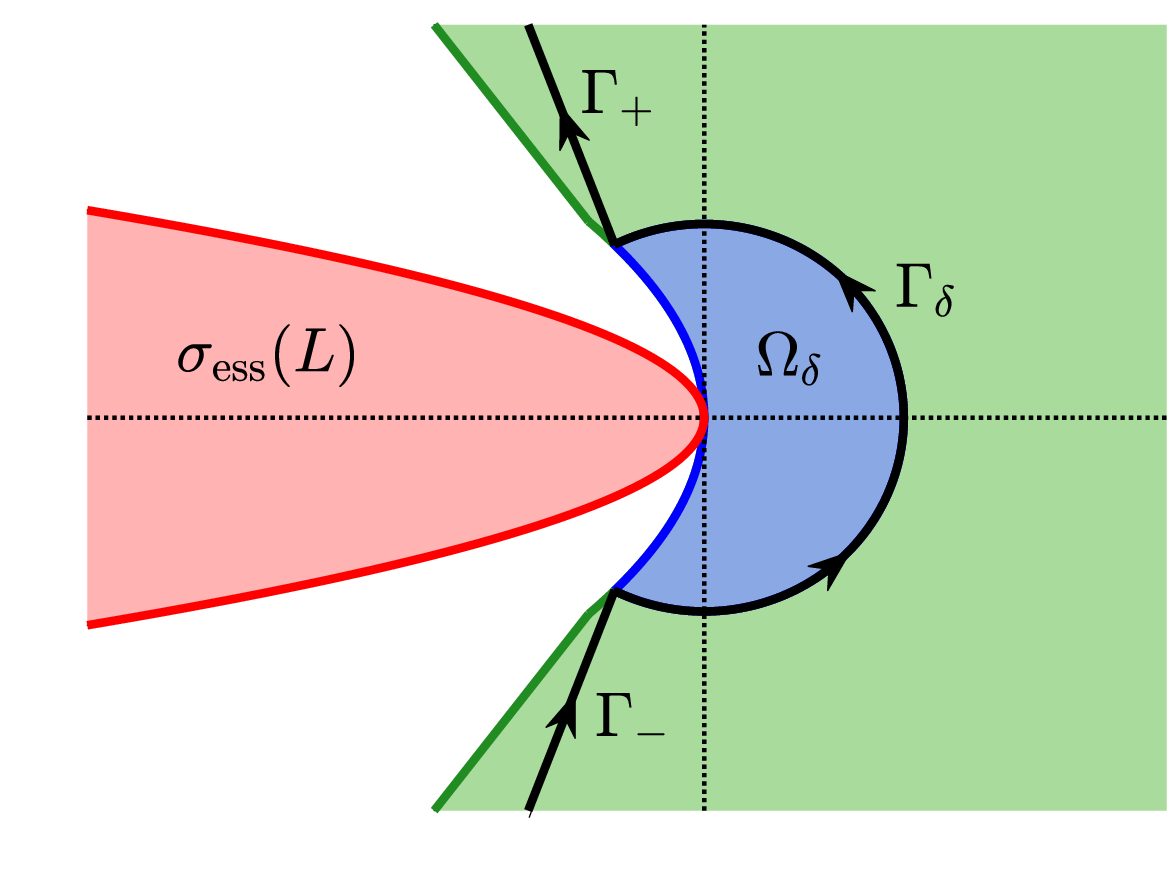}
\end{minipage}
\begin{minipage}[t]{0.45\textwidth}
\centering
\includegraphics[scale=0.35]{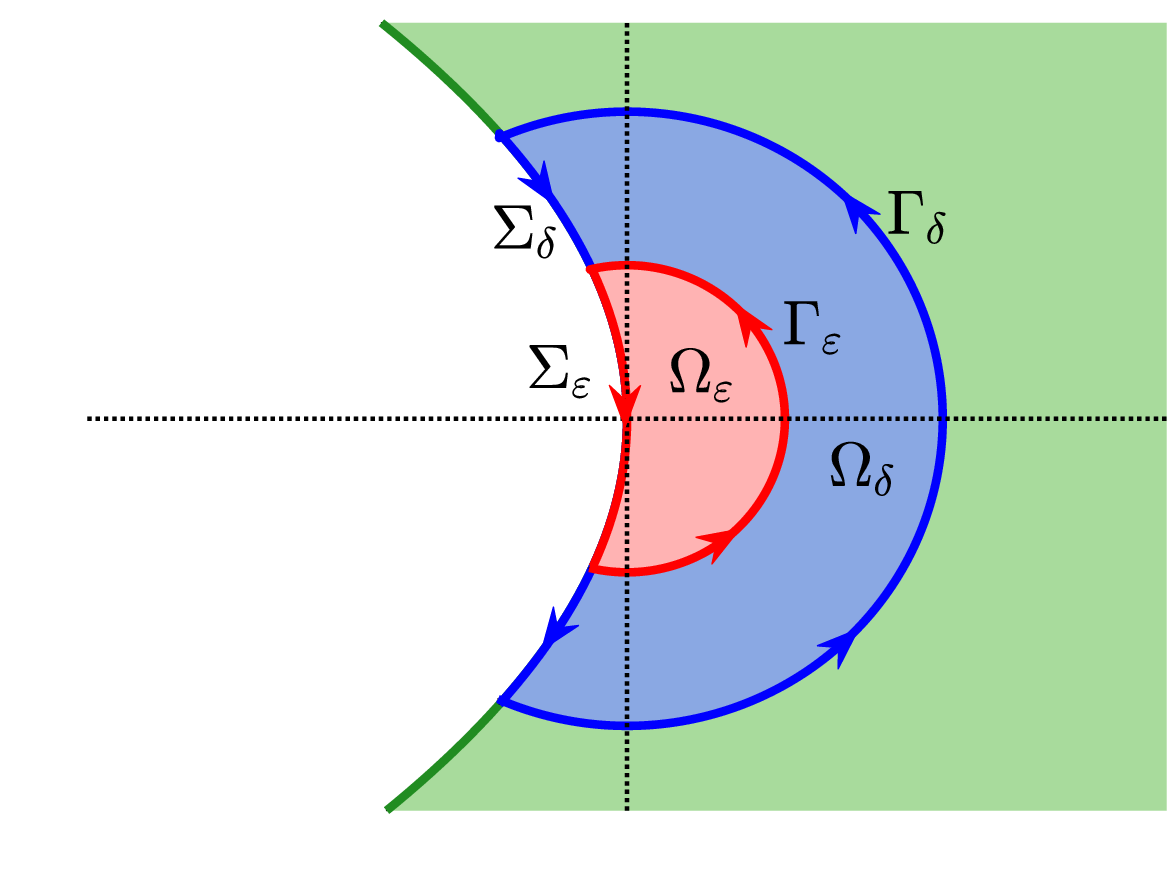}
\end{minipage}
\caption{Contours used in the proof of Theorem \ref{poly4:semigroup}.} \label{figure-contours}
\end{figure}

Using the sectorial estimates from Lemma \ref{poly:aprioriest},  standard semigroup theory (see e.g. \cite{Henry,RenardyRogers,Pazy}) implies that
\begin{align*}
	e^{tL} := \frac{1}{2\pi i} \int_\Gamma e^{ts} (sI - L)^{-1} ds
\end{align*}
defines an analytic semigroup on $L^2_k$ satisfying  \eqref{semiexpest} for any $\beta > 0$ and all $v \in H^\ell_k$, $\ell = 0,1$. The boundary of the closure $\overline{\Omega}_{\delta}= \Omega_{\delta} \cup \{0\}$ is given by the contour
$\partial \Omega_{\delta}= \Sigma_{\delta} \cup \Gamma_{\delta}$ with the parabolic curve (see Figure \ref{figure-contours} (right))
\begin{align*}
	\Sigma_\delta := \{ -\kappa \tau^2 - i \tau : - R \le \tau \le R \}.
\end{align*}
If $v \in X_{k+1+\mu}$ holds, then by Corollary \ref{cor213} 
the function $z(s)=(sI-L)^{-1}v $, $s \in \Omega_{\delta}$ has
a continuous extension to  $\overline{\Omega}_{\delta}$ which we write as $z(0)= (0-L)^{-1}v$. In this sense  the integral
\begin{align} \label{OmegaIntegral}
	\int_{\partial \Omega_\delta} e^{ts} (sI - L)^{-1} v ds
\end{align}
exists in $L^2_k$. We show that it vanishes although $\overline{\Omega}_{\delta}$ is not contained in an open
domain of analyticity of the resolvent \footnote{Note that $0$ is not a removable singularity in the classical
sense which requires analyticity to hold in a pointed neighborhood.}. For that purpose, let $\Omega_\varepsilon$ be the $\varepsilon$-crescent according to \eqref{crescent} for $0 < \varepsilon \le \delta$. Since the resolvent $(sI - L)^{-1}$ is analytic in $\mathrm{res}(L)\supset \Omega_{\delta}\setminus \Omega_{\varepsilon}$ an application of Cauchy's integral theorem and of the uniform estimates from Corollary \ref{cor213} imply
\begin{equation} \label{contourlimit}
	\left\| \int_{\partial \Omega_\delta} e^{ts} (sI - L)^{-1} v ds \right\|_{L^2_k} = \left\|  \int_{\partial \Omega_\varepsilon} e^{ts} (sI - L)^{-1} v ds \right\|_{L^2_k}  \le C \varepsilon \|v\|_{L^2_{k+1+\mu}} \rightarrow 0
\end{equation}
as $\varepsilon \rightarrow 0$. Using Corollary \ref{cor213} and \eqref{contourlimit} this gives for $t \ge 1$ and for a generic constant $C > 0$
\begin{align*}
	\left\| \int_{\Gamma_\delta} e^{ts} (sI - L)^{-1} v ds \right\|_{H^1_k} & = \left\| \int_{\Sigma_\delta} e^{ts} (sI - L)^{-1} v ds \right\|_{H^1_k} \\
	& \le C \| v \|_{L^2_{k+1+\mu}} \int_{-R}^R e^{-t \kappa \tau^2} |2 \kappa \tau + i| d\tau \\
	& \le  \frac{C}{\sqrt{t}} \| v \|_{L^2_{k+1+\mu}} \int_{0}^{\sqrt{t}R} e^{-\kappa \sigma^2} d\sigma \le \frac{C}{\sqrt{t}} \| v \|_{L^2_{k+1+\mu}}.
\end{align*}
Further, for $t \ge 1$ we have due to $\Gamma_\pm \subset \mathrm{res}(L)$
\begin{align} \label{estimate_Gamma_pm}
\begin{split}
	& \left\| \int_{\Gamma_\pm} e^{ts} (sI - L)^{-1} v ds \right\|_{H^1_k} \le C \| v \|_{L^2_{k}} e^{- \kappa t R^2} \int_0^\infty e^{t \tau \cos(\frac{\pi}{2} + \theta_0)} d\tau \\
	& \le \frac{C}{t} \| v \|_{L^2_{k}} e^{- \kappa t R^2} \int_0^\infty e^{- \sigma |\cos (\frac{\pi}{2} + \theta_0)|} d\sigma \le \frac{C}{t} \| v \|_{L^2_{k}}.
\end{split}
\end{align}
Using the exponential bounds \eqref{semiexpest} for $t \in [0,1]$ we conclude for $t > 0$, $\ell = 0,1$
\begin{align}
	 & \| e^{t L} v \|_{H^\ell_k} \le C(1+t)^{-\frac{1}{2}} \| v \|_{H^\ell_{k+1+\mu}}, \quad \| e^{tL} v \|_{H^1_k} \le C t^{-\frac{1}{2}} \| v \|_{L^2_{k+1+\mu}}. \label{poly:semim1}
\end{align}
If $m$ is an arbitrary integer, i.e. $m \in \N$, we can apply \eqref{poly:semim1} $m$-times to obtain
\begin{align*}
	& \| e^{tL} v \|_{H^\ell_k} = \left\| \left( e^{\frac{t}{m}L} \right)^m v \right\|_{H^\ell_k} \le C(1+\tfrac{1}{m}t)^{-\frac{1}{2}} \left\| \left( e^{\frac{t}{m}L} \right)^{m-1} v \right\|_{H^\ell_{k+1+\mu}} \\
	& \le \dots \le C^m(1+\tfrac{1}{m}t)^{-\frac{m}{2}} \left\| v \right\|_{H^\ell_{k+m(1+\mu)}} \le C (1+t)^{-\frac{m}{2}} \| v \|_{H^\ell_{k+m+m\mu}}.
\end{align*}
Replacing $m \mu$ by $\mu$ yields the first estimate in \eqref{poly:semi} for $m \in \N$. The second one follows analogously from the second estimate in \eqref{poly:semim1}. \\
Let us now consider the case for arbitrary $m \ge 1$ and assume $v \in X_{k + m + \mu}$. We decompose $m = p + q$ with $p \in \N$ and $0 \le q < 1$. If $q \le \frac{1}{2}$, the estimate \eqref{poly:semi} immediately follows by our previous observations and the inclusion $H^\ell_{k + m + \mu} \subset H^\ell_{k + p + \mu}$. So let $\frac{1}{2} < q < 1$. Using the resolvent estimate \eqref{eq3:vsingest2} from Corollary \ref{cor213} we observe that the integral \eqref{OmegaIntegral} exists even in $L^2_{k + p}$ since we can estimate:
\begin{align*}
\begin{split}
	& \left\| \int_{\partial \Omega_\delta} e^{ts} (sI - L)^{-1} v ds \right\|_{L^2_{k+p}} = \left\|  \int_{\partial \Omega_\varepsilon} e^{ts} (sI - L)^{-1} v ds \right\|_{L^2_{k + p}}  \\
	& \le \left\|  \int_{\Gamma_\varepsilon} e^{ts} (sI - L)^{-1} v ds \right\|_{L^2_{k + p}} + \left\|  \int_{\Sigma_\varepsilon} e^{ts} (sI - L)^{-1} v ds \right\|_{L^2_{k + p}} \\
	&\le C \varepsilon^{2q - 1} \|v\|_{L^2_{k+p+q}} \rightarrow 0 \quad \text{ as } \varepsilon \rightarrow 0.
\end{split}
\end{align*}
Further, the estimate \eqref{eq3:vsingest2} from Corollary \ref{cor213} now yields for $t \ge 1$ and a generic constant $C > 0$
\begin{align*}
	& \left\| \int_{\Gamma_\delta} e^{ts} (sI - L)^{-1} v ds \right\|_{H^1_{k+p}} =\left\| \int_{\Sigma_\delta} e^{ts} (sI - L)^{-1} v ds \right\|_{H^1_{k+p}} \\
	& \le C \| v \|_{L^2_{k+p+q}} \int_{-R}^R e^{-t \kappa \tau^2} |2 \kappa \tau + i| |\kappa \tau^2 + i \tau|^{2q - 2} d\tau \\
	& \le C t^{- \frac{1}{2}} \| v \|_{L^2_{k+p+q}} \int_{0}^{\sqrt{t}R} e^{-\kappa \sigma^2} |2 \kappa \sigma t^{- \frac{1}{2}} + i| |\kappa \sigma^2 t^{-1} +i \sigma t^{- \frac{1}{2}} |^{2q - 2} d\sigma \\
	& \le C t^{- \frac{1}{2}} \| v \|_{L^2_{k+p+q}} \int_{0}^{\sqrt{t}R} e^{-\kappa \sigma^2} |2 \kappa \sigma t^{- \frac{1}{2}} + i| (\kappa^2 \sigma^4 t^{-2} + \sigma^2 t^{-1} )^{q - 1} d\sigma \\
	& \le C t^{\frac{1}{2} - q} \| v \|_{L^2_{k+p+q}} (2\kappa R + 1) \int_{0}^{\sqrt{t}R} \sigma^{2q - 2} e^{-\kappa \sigma^2} d\sigma \\
	& \le  C t^{\frac{1}{2} - q} \| v \|_{L^2_{k+p+q}} \int_{0}^{\infty} \sigma^{2q - 2} e^{-\kappa \sigma^2} d\sigma \\
	& \le C t^{-\frac{2q-1}{2}} \| v \|_{L^2_{k+p+q}}.
\end{align*}
With \eqref{estimate_Gamma_pm} and the exponential bounds \eqref{semiexpest} this yields for $t > 0$ and $\ell = 0,1$
\begin{align*}
	 & \| e^{t L} v \|_{H^\ell_{k+p}} \le C(1+t)^{-\frac{2q-1}{2}} \| v \|_{H^\ell_{k+p+q}}, \quad \| e^{tL} v \|_{H^1_{k+p}} \le C t^{-\frac{2q-1}{2}} \| v \|_{L^2_{k+p+q}}.
\end{align*}
We obtain the first estimate in \eqref{poly:semi} as follows:
\begin{align*}
	& \| e^{tL} v \|_{H^1_{k}} = \| e^{\frac{pt}{m}L}  e^{\frac{q t}{m}L} v \|_{H^1_{k}} \le C (1 + \tfrac{p}{m} t)^{-\frac{p}{2}} \| e^{\frac{q t}{m}L} v \|_{H^1_{k + p + \mu}} \\
	& \le C (1 + \tfrac{p}{m}t)^{-\frac{p}{2}} (1 + \tfrac{q}{m}t)^{-\frac{2q-1}{2}} \| v \|_{H^1_{k+ p + q + \mu}} \le C (1 + t)^{-\frac{p + 2q - 1}{2}} \| v \|_{H^1_{k+ m + \mu}}.
\end{align*}
As before, the second estimate in \eqref{poly:semi} for the general case $m \ge 1$ follows by similar arguments. This completes the proof.
\end{proof}

\sect{Decomposition of the dynamics and nonlinearities}
\label{sec:Decomposition}
We decompose the solution of the perturbed co-moving system \eqref{perturbsys} into
\begin{align} \label{decomp}
	v(t) = v_\star( \cdot - \tau(t) ) + w(t)
\end{align}
so that $\tau: [0,\infty) \rightarrow \R$ describes a motion along the group orbit $\mathcal{O}(v_\star) := \{ v_\star(\cdot - \tau),\, \tau \in \R\}$ and $w: [0,\infty) \rightarrow X_k$ describes a perturbation perpendicular to the nullspace of the adjoint operator $L^*$ spanned by $\psi$ from Corollary \ref{cor213}. We follow the lines of \cite{BeynDoeding22,BeynLorenz} to show that the decomposition \eqref{decomp} is unique as long as the solution $v$ stays close to the profile $v_\star$ and further derive a system of evolution equations that determines the dynamics of $\tau$ and $w$. We define the maps
\begin{align} \label{npoly:Pi}
	\Pi_k(\tau) := P_k(v_\star(\cdot - \tau) - v_\star), \quad \tau \in \R
\end{align}
and
\begin{align} \label{poly:T}
	T_k(\tau,w) := v_{\star}(\cdot - \tau) - v_\star + w, \quad \tau \in \R, \, w \in X_k.
\end{align}

\begin{lemma} \label{poly:lemmatrafo}
Let Assumption \ref{A1}-\ref{A4} be satisfied and $k \ge 1$. Then there are zero neighborhoods $\U_\tau \subset \R$ and $\U_w \subset X_k$ such that the maps $\Pi_k: \U_\tau \rightarrow \ker(L)$ from \eqref{npoly:Pi} and $T_k: \U_\tau \times \U_w \rightarrow L^2_k$ from \eqref{poly:T} are local diffeomorphisms. Moreover, the solution of $T_k(\tau,w) = v$ is given by
\begin{align} \label{trafo:identity}
	\tau = \Pi_k^{-1}(P_kv), \quad w = v + v_\star - v_\star(\cdot - \tau)
\end{align}
if $(\tau,w) \in \U_\tau \times \U_w$.
\end{lemma}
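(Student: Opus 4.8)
The plan is a textbook application of the inverse function theorem to $\Pi_k$ near $\tau=0$ and to $T_k$ near $(\tau,w)=(0,0)$, preceded by the observation that both maps are smooth between the indicated spaces, and followed by a short algebraic manipulation to extract the explicit formula \eqref{trafo:identity}. The only genuinely technical ingredient is the regularity of the translation map into the algebraically weighted space: by the exponential approach of $v_\star$ to its limits together with the exponential decay of $v_{\star,x},v_{\star,xx}$ (\cite[Theorem 2.5]{BeynDoeding22}; cf. the bounds used in the proof of Lemma \ref{dichoL}) one has $v_{\star,x},v_{\star,xx}\in L^2_k$, and the substitution $y=x-\tau$ with $\eta(y+\tau)\le(2+2\tau^2)^{1/2}\eta(y)$ shows that $\tau\mapsto v_{\star,x}(\cdot-\tau)$ and $\tau\mapsto v_{\star,xx}(\cdot-\tau)$ are bounded in $L^2_k$ uniformly on compact $\tau$-intervals. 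Writing $v_\star(\cdot-\tau)-v_\star=-\int_0^\tau v_{\star,x}(\cdot-\sigma)\,d\sigma$ as an $L^2_k$-valued integral, one infers that $\tau\mapsto v_\star(\cdot-\tau)-v_\star$ is a $C^\infty$ curve in $H^2_k$ with derivative $-v_{\star,x}(\cdot-\tau)$; hence $\Pi_k$ and $T_k$ are smooth, with $\Pi_k(0)=0$ and $T_k(0,0)=0$.

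Next I would compute the linearizations at the base point. Using the normalization $(\psi_2,v_{\star,x})_{L^2}=1$ from \eqref{eq3:adeq},
\begin{align*}
  \Pi_k'(0)=P_k(-v_{\star,x})=-v_{\star,x}\,(\psi_2,v_{\star,x})_{L^2}=-v_{\star,x},
\end{align*}
which is an isomorphism of $\R$ onto $\ker(L)=\mathrm{span}(v_{\star,x})$, so the inverse function theorem makes $\Pi_k$ a local diffeomorphism from some zero neighborhood $\U_\tau^0\subset\R$ onto an open neighborhood of $0$ in $\ker(L)$. Likewise $DT_k(0,0)(\hat\tau,\hat w)=-v_{\star,x}\hat\tau+\hat w$, which is a bounded bijection of $\R\times X_k$ onto $L^2_k$ by the splitting $L^2_k=\ker(L)\oplus X_k$ established after Corollary \ref{cor213}, and hence an isomorphism; the inverse function theorem then yields zero neighborhoods $\U_\tau\subset\U_\tau^0$ and $\U_w\subset X_k$ on which $T_k$ is a diffeomorphism onto an open neighborhood of $0$ in $L^2_k$.

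Finally, for $(\tau,w)\in\U_\tau\times\U_w$ put $v:=T_k(\tau,w)=v_\star(\cdot-\tau)-v_\star+w$. Applying $P_k$ and using $P_kw=0$ — valid since $w\in X_k=(I-P_k)L^2_k$ — yields $P_kv=P_k\big(v_\star(\cdot-\tau)-v_\star\big)=\Pi_k(\tau)$, which lies in $\Pi_k(\U_\tau^0)$ because $\tau\in\U_\tau\subset\U_\tau^0$, so inverting gives $\tau=\Pi_k^{-1}(P_kv)$; rearranging the definition of $v$ gives $w=v+v_\star-v_\star(\cdot-\tau)$, and $P_kw=P_kv-\Pi_k(\tau)=0$ re-confirms $w\in X_k$, so \eqref{trafo:identity} holds. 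The main obstacle is thus only the weighted-space differentiability of the translation map, which rests on the exponential decay of $v_{\star,x}$ and $v_{\star,xx}$; everything else is a routine invocation of the inverse function theorem plus bookkeeping of the nested neighborhoods.
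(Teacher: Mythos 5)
Your proposal is correct and follows essentially the same route as the paper: check that $\tau\mapsto v_\star(\cdot-\tau)-v_\star$ is smooth into the weighted space via exponential decay of $v_{\star,x}$, $v_{\star,xx}$, compute $\Pi_k'(0)=-v_{\star,x}$ and $T_k'(0,0)=(-v_{\star,x},I)$, invoke the inverse function theorem using the splitting $L^2_k=\ker(L)\oplus X_k$, and obtain \eqref{trafo:identity} by applying $P_k$. You merely spell out the weighted-space differentiability step (the integral representation of the translate and the bound $\eta(y+\tau)\le(2+2\tau^2)^{1/2}\eta(y)$) in more detail than the paper, which simply cites the exponential decay result.
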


\begin{proof}
First note that for every $\tau \in \R$ the function $v_{\star}(\cdot  - \tau) - v_\star$ is continuous in $X_k$ and decays exponentially at $\pm \infty$ due to Assumptions \ref{A1}, \ref{A2} and \cite[Thm. 2.5]{BeynDoeding22}. Therefore, $\Pi_k$ is well-defined and continuously differentiable due to Corollary \ref{cor213} and satisfies $\Pi_k(0) = 0$ and $\Pi'_k(0) = -v_{\star,x} \neq 0$. Thus the assertion for $\Pi_k$ is a consequence of the inverse function theorem. Similarly, $T_k$ is continuously differentiable and satisfies $T_k(0,0) = 0$ with $T'_k(0,0) = (-v_{\star,x}, \, I_{L^2_k}): \R \times X_k \rightarrow L^2_k$ which is invertible due to the decomposition $L^2_k = \ker(L) \oplus X_k$. Hence, by the inverse function theorem, $T_k$ is a local diffeomorphism and one concludes the identity \eqref{trafo:identity} by applying $P_k$ to the equation $T_k(\tau,w) = v$.
\end{proof}

Assuming that \eqref{perturbsys} has a solution $v(t)$, $t \in [0,t_\infty)$ for some $t_\infty$ such that $v(t) - v_\star \in \U_w$ for all $t \in [0,t_\infty)$, i.e., the solution stays in the region where $T_k^{-1}$ exists, we find unique $\tau: [0,t_\infty) \rightarrow \R$ and $w: [0,t_\infty) \rightarrow X_k$ such that
\begin{align*}
	u(t) - v_\star = T_k(\tau(t),w(t)), \quad t \in [0,t_\infty)
\end{align*}
and the decomposition \eqref{decomp} holds for all $t \in [0,t_\infty)$. In this case, we have $u_0 \in \U_w$ and we can set the initial values for $\tau$ and $w$ as
\begin{align} \label{wtauinitial}
	\tau(0) = \Pi_k^{-1}(P_k u_0) = : \tau_0, \quad w(0) = u_0 + v_\star - v_\star(\cdot - \tau_0) = : w_0.
\end{align}
If $w(t) \in X^2_k$ for $t \in (0,t_\infty)$ we insert \eqref{decomp} into \eqref{perturbsys} and obtain
\begin{align*}
	& w_t - v_{\star,x}(\cdot - \tau) \tau_t = v_t = Lv + f(v) - Df(v_\star)v \\
	& = Lw + Lv_\star(\cdot - \tau) + f(v_\star(\cdot - \tau) + w) - Df(v_\star)v_\star(\cdot -\tau) - Df(v_\star)w \\
	& = Lw + f(v_\star(\cdot - \tau) + w) - f(v_\star(\cdot - \tau)) - Df(v_\star)w
\end{align*}
where we used $L v_\star(\cdot - \tau) - Df(v_\star)v_\star(\cdot - \tau) = f(v_\star(\cdot - \tau))$ for all $\tau \in \R$ by the equivariance of the stationary co-moving equation \eqref{statcomov}. This leads to
\begin{align} \label{w:eq1}
	w_t = Lw + v_{\star,x}(\cdot - \tau)\tau_t + r^{[f]}(\tau,w)
\end{align}
with the nonlinear remainder
\begin{align*}
	r^{[f]}(\tau,w) = f(v_\star(\cdot - \tau) + w) - f(v_\star(\cdot - \tau)) - Df(v_\star)w.
\end{align*}
Recall the projector $P_k$ from Corollary \ref{cor213} and define the continuous linear map
\begin{align*}
	S_k(\tau)=\Pi_k'(\tau) \colon \R \rightarrow \ker(L), \quad \mu \mapsto -P_k(v_{\star,x}(\cdot - \tau))\mu.
\end{align*}
Since $(\psi_2,v_{\star,x})_{L^2} = 1$ we can assume w.l.o.g. that $(\psi_2,v_{\star,x}(\cdot - \tau))_{L^2} \neq 0 $ for all $\tau \in \U_\tau$ so that the inverse of $S_k(\tau)$, $\tau \in \U_\tau$ exists and is given by
\begin{align} \label{Sinverse}
	S_k(\tau)^{-1}v := \frac{(\psi_2,v)_{L^2}}{(\psi_2,v_{\star,x}(\cdot - \tau))_{L^2}}, \quad v \in \ker(L).
\end{align}
Note that $S_k(\tau)^{-1}$ is continuously differentiable w.r.t. $\tau \in \U_\tau$. Applying first $P_k$ and then $I - P_k$ to \eqref{w:eq1} and using $P_k(w_t - Lw) = 0$, $L(X^2_k) \subset X_k$ we obtain a system of evolution equations for $\tau$ and $w$ 
\begin{alignat}{2}
	\tau_t & = r^{[\tau]}(\tau,w), && \quad \tau(0) = \tau_0, \label{tau-equation} \\
	w_t & = Lw + r^{[w]}(\tau,w), && \quad w(0) = w_0, \label{w-equation}
\end{alignat}
with $\tau_0,w_0$ from \eqref{wtauinitial} and the nonlinear remainders
\begin{align} 
	r^{[\tau]}(\tau,w) & := S_k(\tau)^{-1}P_k r^{[f]}(\tau,w), \label{tau-remainder}\\
	r^{[w]}(\tau,w) & := (I-P_k)(v_{\star,x}(\cdot - \tau)S_k(\tau)^{-1} P_k + I)r^{[f]}(\tau,w). \label{w-remainder} 
\end{align}
Next we derive Lipschitz estimates for the nonlinearities in \eqref{tau-remainder}, \eqref{w-remainder}.

\begin{lemma} \label{poly:estimates}
Let Assumption \ref{A1}-\ref{A4} be satisfied and $\ell = p + q \ge 0$ with $p,q \ge 0$. Then there is $\delta > 0$ and a constant $C = C(\delta) > 0$ such that for all $\tau,\tau_1,\tau_2 \in B_{\delta}(0) \subset \R$ and $w,w_1,w_2 \in B_{\delta}(0) \subset H^1_{p} \cap H^1_{q}$ there hold:
\begin{align}
	& \left\| r^{[f]}(\tau, w_1) - r^{[f]}(\tau, w_2) \right\|_{H^1_{\ell}} \le C \left( |\tau| + \max ( \| w_1 \|_{H^1_{p}}, \| w_2 \|_{H^1_{p}} ) \right) \| w_1 - w_2 \|_{H^1_q} \label{nonlinear-estimate-1} \\
	&  \left\| r^{[f]}(\tau_1, w) - r^{[f]}(\tau_2, w) \right\|_{H^1_{\ell}} \le C|\tau_1 - \tau_2|, \label{nonlinear-estimate-2} \\
	& \left\| r^{[w]}(\tau, w_1) - r^{[w]}(\tau, w_2) \right\|_{H^1_{\ell}} \le C \left( |\tau| + \max ( \| w_1 \|_{H^1_{p}}, \| w_2 \|_{H^1_{p}} ) \right) \| w_1 - w_2 \|_{H^1_q}, \label{nonlinear-estimate-3} \\
	& \left\| r^{[w]}(\tau_1, w_1) - r^{[w]}(\tau_2, w_2) \right\|_{H^1_{\ell}} \le C \left( |\tau_1 - \tau_2| + \| w_1 - w_2 \|_{H^1_{q}}\right),  \label{nonlinear-estimate-4}\\
	& \left| r^{[\tau]}(\tau_1, w_1) - r^{[\tau]}(\tau_2, w_2) \right| \le C \left( |\tau_1 - \tau_2| + \| w_1 - w_2 \|_{H^1}\right) \label{nonlinear-estimate-5}.
\end{align}
\end{lemma}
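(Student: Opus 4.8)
The plan is to reduce the four estimates for $r^{[w]}$ and $r^{[\tau]}$ to estimates for the Nemytskii-type remainder $r^{[f]}$, and to control $r^{[f]}$ through a first-order Taylor representation. Since $g\in C^3$ and $u\mapsto|u|^2$ is polynomial we have $f\in C^3(\R^2,\R^2)$; writing $a=v_\star(\cdot-\tau)$,
\[
  r^{[f]}(\tau,w_1)-r^{[f]}(\tau,w_2)=\int_0^1\Bigl[Df\bigl(a+w_2+s(w_1-w_2)\bigr)-Df(v_\star)\Bigr](w_1-w_2)\,ds .
\]
I would split the bracket as $\bigl[Df(a+w_2+s(w_1-w_2))-Df(a)\bigr]+\bigl[Df(a)-Df(v_\star)\bigr]$: the first piece is pointwise $\le C\bigl(|w_1(x)|+|w_2(x)|\bigr)$ because $Df$ is Lipschitz on the bounded set where the arguments lie (this is where the smallness $\|w_j\|_{H^1}<\delta$ enters), while the second piece equals $Df(v_\star(\cdot-\tau))-Df(v_\star)$ and, since $v_\star$, $v_{\star,x}$, $v_{\star,xx}$ approach their limits exponentially (\cite[Thm.~2.5]{BeynDoeding22}), is $\le C|\tau|\,e^{-\mu_*|x|}$. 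Using the weight splitting $\eta^{\ell}=\eta^{p}\eta^{q}$, the pointwise bound $|w_j(x)|\le C\eta(x)^{-p}\|w_j\|_{H^1_p}$ (valid because $\eta^{p}w_j\in H^1(\R)\hookrightarrow L^\infty(\R)$ and $|\eta'|\le1$), and $e^{-\mu_*|x|}\le C\eta(x)^{-p}$, the bracket is bounded by $C\bigl(|\tau|+\max_j\|w_j\|_{H^1_p}\bigr)\eta^{-p}$ in $L^\infty$; pairing it with $\eta^{q}(w_1-w_2)\in L^2$ yields the $L^2_\ell$ part of \eqref{nonlinear-estimate-1}.

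For the $H^1_\ell$ part I differentiate the integrand in $x$ and bookkeep the resulting terms so that each product pairs a factor carrying the full weight $\eta^{p}$ and bounded in $L^\infty$ with the complementary factor carrying $\eta^{q}$ and bounded in $L^2$. Concretely: when $\partial_x$ hits $(w_1-w_2)$ we keep the bracket in $L^\infty$ (weight $\eta^{p}$) and put $\partial_x(w_1-w_2)\in L^2_q$; when $\partial_x$ hits the bracket it produces, via the chain rule, either $D^2f(\cdot)\bigl(v_{\star,x}(\cdot-\tau)-v_{\star,x}\bigr)$- and $\bigl(D^2f(a+sw)-D^2f(v_\star)\bigr)v_{\star,x}(\cdot-\tau)$-type terms, which are exponentially localized through $v_{\star,x}$, $v_{\star,xx}$ and $O\bigl(|\tau|+\|w\|_{H^1_p}\bigr)$, or a $D^2f(\cdot)\,\partial_x\bigl(w_2+s(w_1-w_2)\bigr)$ term, in which $D^2f(\cdot)$ is bounded, the $\partial_x w_j$-factor sits in $L^2_p$, and the remaining undifferentiated $(w_1-w_2)$ is put in $L^\infty$ with weight $\eta^{q}$. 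In every case the product lies in $L^2_\ell$ with bound $C\bigl(|\tau|+\max_j\|w_j\|_{H^1_p}\bigr)\|w_1-w_2\|_{H^1_q}$; here $f\in C^3$ is needed so that $D^2f$ is Lipschitz. Estimate \eqref{nonlinear-estimate-2} is the same computation but with only $\tau$ moving: setting $\phi(a):=f(a+w)-f(a)$ one has $r^{[f]}(\tau_1,w)-r^{[f]}(\tau_2,w)=\phi(v_\star(\cdot-\tau_1))-\phi(v_\star(\cdot-\tau_2))$, where $D\phi=Df(\cdot+w)-Df(\cdot)$ is bounded and $v_\star(\cdot-\tau_1)-v_\star(\cdot-\tau_2)$ together with its $x$-derivative is $\le C|\tau_1-\tau_2|\,e^{-\mu_*|x|}$, so the exponential localization absorbs all weights and gives $\le C|\tau_1-\tau_2|$ in every $H^1_\ell$. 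Taking $w_2=0$ in these two bounds also yields $\|r^{[f]}(\tau,w)\|_{H^1_\ell}\le C\delta$, which is used below.

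Next I pass to $r^{[w]}$ and $r^{[\tau]}$. By \eqref{w-remainder}, $r^{[w]}(\tau,w)=(I-P_k)\bigl(v_{\star,x}(\cdot-\tau)S_k(\tau)^{-1}P_k+I\bigr)r^{[f]}(\tau,w)$, where $P_k$ and $I-P_k$ are bounded on $H^1_\ell$, the function $v_{\star,x}(\cdot-\tau)$ is exponentially localized uniformly in $|\tau|\le\delta$, and $S_k(\tau)^{-1}$ from \eqref{Sinverse} is a scalar depending on $\tau$ and entering $r^{[f]}$ only through the pairing $(\psi_2,\cdot)_{L^2}$, with denominator $(\psi_2,v_{\star,x}(\cdot-\tau))_{L^2}$ bounded away from $0$ and Lipschitz in $\tau$ (by the w.l.o.g.\ normalization preceding \eqref{Sinverse}). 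Hence \eqref{nonlinear-estimate-3} is immediate from \eqref{nonlinear-estimate-1}. For \eqref{nonlinear-estimate-4} I split $r^{[w]}(\tau_1,w_1)-r^{[w]}(\tau_2,w_2)$ into the $w$-difference at fixed $\tau_1$ (handled by \eqref{nonlinear-estimate-3}, absorbing the small prefactor into $C$) plus the $\tau$-difference at fixed $w_2$, which collects the Lipschitz-in-$\tau$ dependence of the linear prefactor (times the bounded quantity $\|r^{[f]}(\tau_2,w_2)\|_{H^1_\ell}\le C\delta$) and an application of \eqref{nonlinear-estimate-2}. Finally, from \eqref{tau-remainder}, $P_kr^{[f]}=v_{\star,x}(\psi_2,r^{[f]})_{L^2}$ and $(\psi_2,v_{\star,x})_{L^2}=1$ one gets $r^{[\tau]}(\tau,w)=(\psi_2,r^{[f]}(\tau,w))_{L^2}/(\psi_2,v_{\star,x}(\cdot-\tau))_{L^2}$; since $r^{[f]}$ is a product of a decaying/exponentially localized factor with $w$, it lies in $L^1(\R)$ already for $w\in H^1$, so $|(\psi_2,\cdot)_{L^2}|\le\|\psi_2\|_{L^\infty}\|\cdot\|_{L^1}\le C\|\cdot\|_{L^2_1}$, and \eqref{nonlinear-estimate-5} follows by combining the $L^2_1$-versions of \eqref{nonlinear-estimate-1}--\eqref{nonlinear-estimate-2} with the Lipschitz bound for $1/(\psi_2,v_{\star,x}(\cdot-\tau))_{L^2}$.

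The routine-but-delicate core of the argument — and the step I expect to take most care — is the weighted bookkeeping in the $H^1_\ell$ bound for $\partial_x r^{[f]}$: because $v_\star$ and $Df(v_\star)$ do \emph{not} decay at $+\infty$, the entire algebraic gain $\eta^\ell$ must be extracted from the \emph{differences} in $r^{[f]}$ (the factor $w$, which decays like $\eta^{-p}$, or the exponentially localized shift differences $v_\star(\cdot-\tau)-v_\star$), and one must consistently match an $L^\infty$-factor carrying $\eta^{p}$ against an $L^2$-factor carrying $\eta^{q}$ — in particular the term where $\partial_x$ lands on $w$, hence on an object that is only $L^2$, has to be matched against the bracket in $L^\infty$. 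Keeping this $(L^\infty_{\eta^p},L^2_{\eta^q})$ split coherent across all terms produced by the product and chain rules, while using no more regularity than $g\in C^3$, is the crux; the rest is bounded-operator algebra plus the elementary inequalities for $\eta$.
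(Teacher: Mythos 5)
Your proof is correct and essentially mirrors the paper's: both start from a Taylor representation of $r^{[f]}$, use the weight splitting $\eta^{\ell}=\eta^{p}\eta^{q}$ to pair an $L^\infty$-factor carrying $\eta^{p}$ (via $H^1\hookrightarrow L^\infty$) against an $L^2$-factor carrying $\eta^{q}$, absorb the $\tau$-dependence through the exponential localization of $v_\star(\cdot-\tau)-v_\star$, and then reduce \eqref{nonlinear-estimate-3}--\eqref{nonlinear-estimate-5} to \eqref{nonlinear-estimate-1}--\eqref{nonlinear-estimate-2} using boundedness of $P_k$, exponential localization of $v_{\star,x}$, and smoothness of $S_k(\tau)^{-1}$. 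The only cosmetic difference is that you split the $Df$-difference into two Lipschitz pieces, whereas the paper introduces a second Taylor integral to expose $D^2f$ explicitly; the resulting quadratic bounds are the same.
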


\begin{proof}
We denote by $C = C(\delta) > 0$ a generic constant and choose $\delta > 0$ so small such that $B_\delta(0) \subset \U_\tau$ and $B_\delta(0) \subset \U_w$ with $\U_\tau,\U_w$ from Lemma \ref{poly:lemmatrafo} so that the remainders $r^{[f]}, r^{[\tau]}, r^{[w]}$ are well-defined. The main idea for handling the different algebraic rates is to use the following estimates for all $u,v \in H^1_{p} \cap H^1_{q}$ that are simple consequences of standard Sobolev embeddings ($H^1 \subset L^\infty$):
\begin{align} \label{poly:estimatesproof2}
	\big\| |u| |v| \big\|_{L^2_{\ell}}^2 & = \int_\R  |\eta^p(x) u(x)|^2 |\eta^q(x) v(x)|^2 dx \le \| u \|_{L^\infty_p}^2 \| v \|_{L^2_q}^2 \le \| u \|_{H^1_p}^2 \| v \|_{L^2_q}^2
\end{align}
and due to Assumption \ref{A1}, \ref{A2} and \cite[Thm. 2.5]{BeynDoeding22}
\begin{align} \label{poly:estimatesproof1}
\begin{split}
	\big\| |v_\star(\cdot - \tau) - v_\star| |v| \big\|_{L^2_{\ell}}^2 & \le \| v \|_{L^\infty}^2 \int_\R \eta^{2\ell}(x) |v_\star(x - \tau) - v_\star(x)|^2 dx \le C|\tau|^2 \| v \|_{H^1}^2.
\end{split}
\end{align}
In fact, the remainders are quadratic so that we can frequently make use of the estimates \eqref{poly:estimatesproof2}, \eqref{poly:estimatesproof1} in the following. We abbreviate $\kappa(\nu) := v_\star - v_\star(\cdot - \tau) - w_2 + \nu(w_1-w_2)$ and obtain, since $f$ is smooth due to Assumption \ref{A1},
\begin{align*}
	& \| r^{[f]}(\tau,w_1) - r^{[f]}(\tau,w_2) \|_{L^2_{\ell}} \\
	& = \| f(v_\star(\cdot - \tau) + w_1) - f(v_\star(\cdot - \tau) + w_2) - Df(v_\star) (w_1-w_2)\|_{L^2_{\ell}} \\
	& \le \int_0^1 \| [Df(v_\star(\cdot - \tau) + w_2 + \nu(w_1 - w_2)) - Df(v_\star)](w_1-w_2) \|_{L^2_{\ell}} d\nu \\
	& \le \int_0^1 \int_0^1 \| D^2f(v_\star + \sigma \kappa(\nu))[\kappa(\nu),w_1-w_2] \|_{L^2_{\ell}} d\sigma d\nu \\
	& \le \sup \big\{ | D^2 f(z) | : |z| \le 2 \| v_\star \|_{L^\infty} + \| w_1 \|_{L^\infty} + \| w_2 \|_{L^\infty} \big\}  \\
	& \qquad \qquad \qquad \qquad \qquad \qquad \qquad \qquad \qquad \int_0^1 \big\| |\kappa(\nu) | |w_1 - w_2| \big\|_{L^2_\ell} d\nu \\
	& \le C \left( \big\| |v_\star(\cdot - \tau) - v_\star|\,|w_1 - w_2| \big\|_{L^2_{\ell}} + \big\| |w_2|\,|w_1 - w_2| \big\|_{L^2_{\ell}} + \big\| |w_1 - w_2|^2 \big\|_{L^2_{\ell}} \right) \\
	& \le C \left( |\tau| + \max ( \| w_1 \|_{H^1_{p}}, \| w_2 \|_{H^1_{p}} ) \right) \| w_1 - w_2 \|_{H^1_q}.
\end{align*}
The estimate for the derivative follows by similar arguments and by using $f \in C^3$ so that \eqref{nonlinear-estimate-1} holds. For \eqref{nonlinear-estimate-2} we estimate, using the exponential convergence of $v_\star$ at $\pm \infty$ from \cite[Thm. 2.5]{BeynDoeding22},
\begin{align*}
	& \| r^{[f]}(\tau_1 ,w) - r^{[f]}(\tau_2,w) \|_{L^2_{\ell}} = \| f(v_\star(\cdot - \tau_1) +w ) - f(v_\star(\cdot - \tau_2) +w ) \|_{L^2_{\ell}} \\
	& \le \int_0^1 \| Df(v_\star(\cdot - \tau_2) + w + \nu (v_\star(\cdot -\tau_1) - v_\star(\cdot - \tau_2)) \\
	& \qquad \qquad \qquad \qquad \qquad \qquad \qquad (v_\star(\cdot -\tau_1)) - v_\star(\cdot - \tau_2)) \|_{L^2_\ell} d\nu \\
	& \le \sup \big\{ | Df(z) | : |z| \le \| w \|_{L^\infty} + \| v_\star \|_{L^\infty} \big\}  \| v_\star(\cdot -\tau_1)) - v_\star(\cdot - \tau_2)) \|_{L^2_\ell} \\
	& \le C \| v_{\star}(\cdot - \tau_1) - v_{\star}(\cdot - \tau_2) \|_{L^2_{\ell}} \le C |\tau_1 - \tau_2|.
\end{align*}
Again the estimate of the derivative follows analogously and one infers \eqref{nonlinear-estimate-2}. The estimates \eqref{nonlinear-estimate-3}, \eqref{nonlinear-estimate-4}, \eqref{nonlinear-estimate-5} follow from \eqref{nonlinear-estimate-1} and \eqref{nonlinear-estimate-2} and using the boundedness of $P_k: L^2_k \rightarrow \ker(L)$ from Corollary \ref{cor213} and the smoothness of $S_k(\tau)^{-1}$ from \eqref{Sinverse}; see \cite{Doeding} for more details of the estimates.
\end{proof}

\sect{Nonlinear stability}
\label{sec:NonlinearStability}

In this section we prove our main result, Theorem \ref{mainresult}. Assuming suffciently small initial perturbation w.r.t. sufficiently high algebraic rates we prove stability of the solution of the decomposed system \eqref{tau-equation}, \eqref{w-equation} which then lead to the stability of the original system \eqref{perturbsys}. More precisely, we work the corresponding integral equations of \eqref{tau-equation}, \eqref{w-equation}, reading as follows:
\begin{equation} \label{wtau-integral}
\begin{aligned}
	\tau(t) & = \tau_0 + \int_0^t r^{[\tau]}(\tau(s),w(s)) ds, \\
	w(t) & = e^{tL} w_0 + \int_0^t e^{(t-s)L} r^{[w]}(\tau(s),w(s)) ds.
\end{aligned}
\end{equation}
Using the semigroup estimates from Theorem \ref{poly4:semigroup} and the Lipschitz estimates from Lemma \ref{poly:estimates} we show global existence of a solution and algebraic decay of the perturbation $w$ in \eqref{wtau-integral} by applying a Gronwall argument. The transformation \eqref{decomp} then leads to the stability of the original system \eqref{perturbsys}. \\
As a first step we establish the existence of a local (mild) solution of \eqref{wtau-integral} by a standard contraction argument using the exponential estimates of the semigroup from \eqref{semiexpest}. Since the proof is rather standard and fully analogous to the proof of \cite[Lemma 7.1]{BeynDoeding22}, it is omitted here.

\begin{lemma} \label{poly:existence}
  Let the Assumption \ref{A1}-\ref{A4} be satisfied and $k \ge 1$. Further, let $\Cexp \ge 1$ from \eqref{semiexpest} and $\delta > 0$ from Lemma \ref{poly:estimates}. Then for every $0 < \varepsilon_1 < \delta$ and $0 < 2\Cexp \varepsilon_0 \le  \delta$ there
  exists some $t_\star = t_\star(\varepsilon_0,\varepsilon_1) > 0$ such that for all initial values $(\tau_0, w_0) \in \R \times X^1_k$ with $\| w_0 \|_{H^1_k} \le \varepsilon_0$ and $|\tau_0| \le \varepsilon_1$, there exists a unique local (mild) solution $(\tau,w) \in C([0,t_\star),\R \times X^1_k)$ of \eqref{wtau-integral} with 
\begin{align*}
	\| w(t) \|_{H^1_k} \le 2 \Cexp \varepsilon_0, \quad |\tau(t)| \le 2 \varepsilon_1, \quad t \in [0,t_\star).
\end{align*}
In particular, $t_\star$ can be taken uniformly for $(\tau_0,w_0) \in B_{\varepsilon_1}(0) \times B_{\varepsilon_0}(0) \subset \R \times X^1_k$.
\end{lemma}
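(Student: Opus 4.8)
The plan is a standard contraction-mapping argument for the integral system \eqref{wtau-integral}. Fix $\varepsilon_0,\varepsilon_1$ as in the hypotheses and, for $t_\star>0$ to be chosen, introduce the set $M_{t_\star}$ of pairs $(\tau,w)\in C([0,t_\star],\R\times X^1_k)$ with $(\tau(0),w(0))=(\tau_0,w_0)$ and satisfying the a~priori bounds $|\tau(t)|\le 2\varepsilon_1$ and $\|w(t)\|_{H^1_k}\le 2\Cexp\varepsilon_0$ for all $t\in[0,t_\star]$, equipped with the metric inherited from $C([0,t_\star],\R)\times C([0,t_\star],H^1_k)$; this is a complete metric space. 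For $\varepsilon_0,\varepsilon_1$ small these bounds, together with $2\Cexp\varepsilon_0\le\delta$, keep $(\tau(t),w(t))$ inside the neighborhoods $\U_\tau\times\U_w$ of Lemma \ref{poly:lemmatrafo}, so the remainders $r^{[f]},r^{[w]},r^{[\tau]}$ are well defined along every element of $M_{t_\star}$ and Lemma \ref{poly:estimates} is applicable. Define $\Phi=(\Phi_1,\Phi_2)$ by the right-hand sides of \eqref{wtau-integral}; a fixed point of $\Phi$ in $M_{t_\star}$ is exactly the desired local mild solution.

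Next I would check that $\Phi$ maps $M_{t_\star}$ into itself. Since $r^{[f]}(\tau,0)=0$, and hence $r^{[w]}(\tau,0)=0$ and $r^{[\tau]}(\tau,0)=0$, the Lipschitz estimates \eqref{nonlinear-estimate-4}, \eqref{nonlinear-estimate-5} applied with second argument zero give, along any $(\tau,w)\in M_{t_\star}$, the uniform bounds $|r^{[\tau]}(\tau(s),w(s))|\le C_0$ and $\|r^{[w]}(\tau(s),w(s))\|_{H^1_k}\le C_0$ with $C_0=C_0(\delta,\varepsilon_0)$. For the $\tau$-component, $|\Phi_1(\tau,w)(t)|\le|\tau_0|+t_\star C_0\le\varepsilon_1+t_\star C_0\le 2\varepsilon_1$ once $t_\star\le\varepsilon_1/C_0$. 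For the $w$-component I use only the first exponential bound in \eqref{semiexpest} (the second has a non-integrable $t^{-1/2}$ factor but is not needed, since $r^{[w]}$ is already $H^1_k$-valued) together with the invariance of $X_k$ under $e^{tL}$, to get
\[
\|\Phi_2(\tau,w)(t)\|_{H^1_k}\le\Cexp e^{t}\|w_0\|_{H^1_k}+\int_0^t\Cexp e^{t-s}\|r^{[w]}(\tau(s),w(s))\|_{H^1_k}\,ds\le\Cexp e^{t_\star}\varepsilon_0+\Cexp e^{t_\star}C_0 t_\star,
\]
which is $\le 2\Cexp\varepsilon_0$ after first shrinking $t_\star$ so that $e^{t_\star}\le\tfrac32$ and then so that $\Cexp e^{t_\star}C_0 t_\star\le\tfrac12\Cexp\varepsilon_0$. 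Continuity in $t$ and the initial conditions are clear, so $\Phi(M_{t_\star})\subset M_{t_\star}$.

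Then I would show $\Phi$ is a contraction after possibly shrinking $t_\star$ once more. For $(\tau_1,w_1),(\tau_2,w_2)\in M_{t_\star}$ the estimates \eqref{nonlinear-estimate-4}, \eqref{nonlinear-estimate-5} bound the pointwise-in-$s$ differences of the remainders by $C\big(|\tau_1(s)-\tau_2(s)|+\|w_1(s)-w_2(s)\|_{H^1_k}\big)$ with $C=C(\delta)$; integrating in time and again using \eqref{semiexpest} for the $w$-part yields
\[
\sup_{t\le t_\star}\Big(|\Phi_1(\tau_1,w_1)(t)-\Phi_1(\tau_2,w_2)(t)|+\|\Phi_2(\tau_1,w_1)(t)-\Phi_2(\tau_2,w_2)(t)\|_{H^1_k}\Big)\le C\big(1+\Cexp e^{t_\star}\big)\,t_\star\,d\big((\tau_1,w_1),(\tau_2,w_2)\big),
\]
which is a $\tfrac12$-contraction once $t_\star$ is small enough. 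The Banach fixed point theorem then gives a unique $(\tau,w)\in M_{t_\star}$ with $\Phi(\tau,w)=(\tau,w)$, hence the claimed local mild solution with the stated bounds, and $(\tau,w)\in C([0,t_\star),\R\times X^1_k)$ by membership in $M_{t_\star}$. Since every choice above for $t_\star$ depends only on $\Cexp$, $\delta$, $\varepsilon_0$ and $\varepsilon_1$ — not on the specific $(\tau_0,w_0)$ — the uniformity over $B_{\varepsilon_1}(0)\times B_{\varepsilon_0}(0)$ follows. The steps requiring mild care, none of them serious, are: verifying that the a~priori bounds keep the iterates in the domain of validity of Lemma \ref{poly:estimates}; the $e^{tL}$-invariance of $X_k$ (which holds because $v_{\star,x}\in\ker(L)$ is fixed by the semigroup and, dually, $\psi_2$ lies in the kernel of the adjoint); and the observation that the singular smoothing exponent in \eqref{semiexpest} is not needed here because the nonlinearity is controlled in $H^1_k$.
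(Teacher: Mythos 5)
The paper itself omits the proof, remarking only that it is ``rather standard and fully analogous to the proof of \cite[Lemma 7.1]{BeynDoeding22}.'' Your contraction-mapping argument on the closed ball of $C([0,t_\star],\R\times X^1_k)$ is exactly this standard approach, and the key observations you single out are the right ones: $r^{[f]}(\tau,0)=0$ makes $r^{[w]},r^{[\tau]}$ vanish at $w=0$ so the Lipschitz estimates of Lemma \ref{poly:estimates} (with $p=0$, $q=\ell=k$) immediately yield the needed a~priori bounds; the $t^{-1/2}$ smoothing estimate in \eqref{semiexpest} is indeed dispensable because $r^{[w]}$ is already $H^1_k$-valued; and the invariance of $X_k$ under $e^{tL}$ follows from $L(X^2_k)\subset X_k$. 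The proof is correct, self-contained, and takes essentially the same route the paper refers to.
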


Next we extend the local solution from Lemma \ref{poly:existence} to a global solution such that the perturbation $w$ decays algebraically. For this we use the following Gronwall estimate involving algebraic integral kernels.

\begin{lemma} \label{poly:Gronwall}
Suppose $p \ge 1$, $C_1 \ge 1$ and $C_2, \varepsilon > 0$ such that
\begin{align*}
	\varepsilon \le  \frac{1}{9C_1C_2C_3}, \quad C_3 := \frac{2^{p-1} p}{p-1}
\end{align*}
and let $\varphi \in C([0,T),\R_+)$ for some $T > 0$ satisfying
\begin{align*}
	\varphi(t) \le \frac{C_1 \varepsilon}{(1+t)^{p}} + C_2 \int_0^t \frac{\varepsilon + \varphi(s)}{(1+t-s)^{p}} \varphi(s) ds \quad \forall t \in [0,T).
\end{align*}
Then for all $0 \le t < T$ there holds
\begin{align*}
	\varphi(t) \le \frac{3C_1 \varepsilon}{(1+t)^{p-1}}.
\end{align*}
\end{lemma}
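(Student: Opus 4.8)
The plan is a continuity (bootstrap) argument organized around a single convolution estimate for algebraic kernels. I would set $M := 3C_1\varepsilon$ and consider the set $A := \{ t \in [0,T) : \varphi(t) > M(1+t)^{1-p} \}$. Since $\varphi$ is continuous and $\varphi(0) \le C_1\varepsilon < 3C_1\varepsilon = M$, the set $A$ is relatively open in $[0,T)$ and bounded away from $0$; if it is nonempty, its infimum $t_0 := \inf A$ satisfies $t_0 > 0$, the a priori bound $\varphi(s) \le M(1+s)^{1-p}$ for all $s \in [0,t_0]$, and the equality $\varphi(t_0) = M(1+t_0)^{1-p}$ (by continuity, taking limits from the left and from within $A$). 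The goal is then to feed this a priori bound back into the hypothesis at $t = t_0$ and arrive at the strict inequality $\varphi(t_0) < M(1+t_0)^{1-p}$, contradicting the equality; hence $A = \emptyset$ and the asserted decay holds on all of $[0,T)$.

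The analytic core, which I would establish first, is the kernel estimate: for $p > 1$ there is a constant $C_3 = C_3(p) > 0$ (the value $C_3 = 2^{p-1}p/(p-1)$ from the statement works) such that
\[
  \int_0^t \frac{ds}{(1+t-s)^{p}\,(1+s)^{p-1}} \;\le\; \frac{C_3}{(1+t)^{p-1}}, \qquad t \ge 0 .
\]
This is proved by splitting the integration at $s = t/2$. On $[0,t/2]$ one has $1+t-s \ge \tfrac12(1+t)$, so $(1+t-s)^{-p} \le 2^p(1+t)^{-p}$, and the residual $\int_0^{t/2}(1+s)^{1-p}\,ds$ is bounded when $p > 2$ and grows at most like $(1+t)^{2-p}$ when $1 < p \le 2$; in either case the product decays at least like $(1+t)^{1-p}$, using $2-2p \le 1-p$ for $p \ge 1$. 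On $[t/2,t]$ one has $1+s \ge \tfrac12(1+t)$, so $(1+s)^{1-p} \le 2^{p-1}(1+t)^{1-p}$, and $\int_{t/2}^t (1+t-s)^{-p}\,ds \le \int_0^\infty (1+r)^{-p}\,dr = (p-1)^{-1}$.

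To close the loop I would use that on $[0,t_0]$ the bootstrap bound gives $\varphi(s) \le M(1+s)^{1-p} \le M$ (since $p \ge 1$), hence $\varepsilon + \varphi(s) \le \varepsilon + M \le 4C_1\varepsilon$ (since $C_1 \ge 1$) and $\varphi(s)^2 \le M^2(1+s)^{2-2p} \le M^2(1+s)^{1-p}$, so that $(\varepsilon + \varphi(s))\,\varphi(s) \le M(\varepsilon+M)(1+s)^{1-p} \le 12 C_1^2\varepsilon^2 (1+s)^{1-p}$. Plugging this together with the kernel estimate into the hypothesis at $t = t_0$ gives
\[
  \varphi(t_0) \;\le\; \frac{C_1\varepsilon}{(1+t_0)^{p}} + 12 C_1^2 C_2 C_3\,\varepsilon^2\,(1+t_0)^{1-p} \;\le\; \frac{C_1\varepsilon + 12 C_1^2 C_2 C_3\,\varepsilon^2}{(1+t_0)^{p-1}},
\]
and the smallness hypothesis $\varepsilon \le (9C_1C_2C_3)^{-1}$ bounds $12 C_1^2 C_2 C_3\varepsilon^2 \le \tfrac43 C_1\varepsilon$, so the numerator is at most $\tfrac73 C_1\varepsilon < 3C_1\varepsilon = M$, which is the desired contradiction. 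I expect the only genuinely delicate step to be the kernel estimate — specifically, arranging that the two end regimes $s\approx 0$ and $s\approx t$ of the convolution cooperate so that exactly one power of $1+t$ is lost; the remainder is bookkeeping with the smallness of $\varepsilon$ and the continuity method.
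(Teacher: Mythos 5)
Your proposal is correct and follows essentially the same route as the paper: a continuity/bootstrap argument (you phrase it as $t_0=\inf A$, the paper as $\tau=\sup\{t_\star:\dots\}$, but they are the same contradiction), driven by the same convolution estimate for $\int_0^t (1+s)^{1-p}(1+t-s)^{-p}\,ds$ obtained by splitting at $s=t/2$. One small point worth tightening: on $[0,t/2]$ you bound the residual by actually integrating $(1+s)^{1-p}$, which forces the case distinction $p\gtrless 2$ (and at $p=2$ gives logarithmic growth, not boundedness as you wrote), and does not obviously reproduce the stated constant $C_3=2^{p-1}p/(p-1)$; the paper instead simply discards $(1+s)^{1-p}\le 1$ and uses $(1+t-s)\ge 1+\tfrac{t}{2}$ together with $\int_0^{t/2}ds\le \tfrac{t}{2}\le\tfrac{1+t}{2}$, which gives the factor $2^{p-1}$ with no case split and matches $C_3$ exactly. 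With that simplification your proof is identical to the paper's in all but notation, including the final arithmetic $C_1\varepsilon+\tfrac43 C_1\varepsilon<3C_1\varepsilon$.
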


\begin{proof} We estimate for $t \ge 0$
\begin{align*}
	& \int_0^t \frac{(1+t)^{p-1}}{(1+s)^{p-1}(1+t-s)^{p}} ds = \int_0^1 \frac{t(1+t)^{p-1}}{(1+\tau t)^{p-1}(1+(1-\tau)t)^{p}} d\tau \\
	& \le \int_0^{\frac{1}{2}} \frac{(1+t)^{p}}{(1+\tau t)^{p-1}(1+(1-\tau)t)^{p}} d\tau + \int_{\frac{1}{2}}^1 \frac{t(1+t)^{p-1}}{(1+\tau t)^{p-1}(1+(1-\tau)t)^{p}} d\tau \\
	& \le \int_0^{\frac{1}{2}} \frac{(1+t)^{p}}{(1+\tfrac{1}{2}t)^{p}} d\tau +  \frac{(1+t)^{p-1}}{(1+\tfrac{1}{2} t)^{p-1}} \int_{\frac{1}{2}}^1 \frac{t}{(1+(1-\tau)t)^{p}} d\tau \\
	& = \frac{(1+t)^{p}}{2(1+\tfrac{1}{2}t)^{p}} + \frac{(1+t)^{p-1}}{(p-1)(1+\tfrac{1}{2} t)^{p-1}}\left( 1 - (1+ \tfrac{1}{2} t)^{1-p}) \right) \\
	& \le \frac{(1+t)^{p}}{2(1+\tfrac{1}{2}t)^{p}} + \frac{(1+t)^{p-1}}{(p-1)(1+\tfrac{1}{2} t)^{p-1}} \le 2^{p-1} \Big(1+ \frac{1}{p-1} \Big) = \frac{2^{p-1} p}{p-1} =: C_3
\end{align*}
where we used the bound $(1 + t)^k \le 2^k (1 + \tfrac{1}{2}t)^k$ for all $t,k \ge 0$. Now let
\begin{align*}
	\tau := \sup \big\{ t_\star \in [0,T): \varphi(t) \le 3C_1 \varepsilon (1+t)^{-p+1} \, \forall \, t \in [0,t_\star) \big\}
\end{align*}
and note that $\tau>0$ follows from $\varphi(0)\le C_1 \varepsilon$.
Now assume  $\tau < T$. Then  $\varphi \in C([0,T),\R_+)$ and our assumptions imply
\begin{align*}
	& 3C_1 \varepsilon = (1+\tau)^{p-1} \varphi(\tau) \le C_1 \varepsilon + C_2 \int_0^\tau \frac{(1+\tau)^{p-1}}{(1+\tau-s)^{p}} (\varepsilon + \varphi(s)) \varphi(s) ds \\
	& < C_1 \varepsilon + 3C_1 C_2 \varepsilon \int_0^1 \frac{(1+\tau)^{p-1}}{(1+s)^{p-1}(1+\tau-s)^{p}} (\varepsilon + \varphi(s)) ds \\ 
	& < C_1 \varepsilon + 3C_1C_2 \varepsilon^2  \int_0^\tau \frac{(1+\tau)^{p-1}}{(1+s)^{p-1}(1+\tau-s)^{p}} ds \\
	& \qquad \qquad + 9C_1^2 C_2 \varepsilon^2 \int_0^\tau \frac{(1+\tau)^q}{(1+s)^{2p-2}(1+\tau-s)^{p}} ds \\
	& \le C_1 \varepsilon + 3C_1 C_2 C_3 \varepsilon^2 + 9 C_1^2 C_2 C_3 \varepsilon^2 < 3C_1\varepsilon.
\end{align*}
This is a contradiction,  so that $\tau = T$ follows, and our assertion is proved.
\end{proof}

Now we are in the position to prove the analogue of our main result for the $(\tau,w)$-system \eqref{wtau-integral}.

\begin{theorem} \label{poly:Stabilitywt}
Let Assumption \ref{A1}-\ref{A4} be satisfied and let $k,m,\mu > 0$ be given with $\frac{9}{2} < m < m + \mu \le k$. Then exist constants $\varepsilon > 0$, $K \ge 1$ such that the following statements hold for all initial values $(\tau_0,w_0) \in \R \times X^2_k$ which satisfy $\| (\tau_0, w_0) \|_{\R \times H^1_{k+m+\mu}} < \varepsilon$:
\begin{enumerate}
\item The system \eqref{tau-equation},\eqref{w-equation} has a unique solution 
\begin{align} \label{tauwunique}
	\tau \in C^1([0,\infty),\R), \quad  w \in C((0,\infty),X^2_k) \cap C^1([0,\infty),X_k).
\end{align}
The solution is regular in the sense
\begin{align*}
	w \in C^\alpha((0,\infty),X^2_k) \cap C^{1+\alpha}((0,\infty),X_k) \cap C^1([0,\infty),X_k) \quad \forall \alpha \in (0,1).
\end{align*}
\item There exist $\tau_\infty = \tau_\infty(\tau_0,w_0) \in \R$ such that for all $t \ge 0$
\begin{align}
	& \| w(t) \|_{H^1_k} \le \frac{K}{(1+t)^{\frac{m^*-2}{2}}} \| (\tau_0, w_0) \|_{\R \times H^1_{k+m+\mu}}, \label{w-estimate} \\
	&  |\tau(t) - \tau_\infty| \le \frac{K}{(1+t)^{\frac{m^*-4}{2}}} \| (\tau_0, w_0) \|_{\R \times H^1_{k+m+\mu}}, \label{tau-estimate} \\
	& |\tau_\infty| \le (K+1) \| (\tau_0, w_0) \|_{\R \times H^1_{k+m+\mu}}, \label{tauinfty-estimate}
\end{align}
where $m^* = \lfloor m \rfloor + \max(0,2q-1)$ for $m = \lfloor m \rfloor + q$, $0 \le q < 1$.
\end{enumerate}
\end{theorem}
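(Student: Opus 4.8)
The plan is to work with the integral equations \eqref{wtau-integral} for the pair $(\tau,w)$. A local mild solution is provided by Lemma \ref{poly:existence}, and a standard continuation argument yields a maximal interval $[0,t_{\max})$ on which the solution exists and stays in the region $\|w(t)\|_{H^1_k}<\delta$, $|\tau(t)|<\delta$ where $r^{[f]},r^{[w]},r^{[\tau]}$ are defined ($\delta$ from Lemma \ref{poly:estimates}). On this interval I would run a simultaneous bootstrap for the two a priori bounds
\[
\|w(t)\|_{H^1_k}\le 3\Calg\,\varepsilon\,(1+t)^{-\frac{m^*-2}{2}},\qquad |\tau(t)|\le 2\varepsilon,\qquad \varepsilon:=\|(\tau_0,w_0)\|_{\R\times H^1_{k+m+\mu}},
\]
and then conclude $t_{\max}=\infty$, since for $\varepsilon$ small these bounds remain strictly inside the region of validity; uniqueness is inherited from Lemma \ref{poly:existence} by continuation.

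For the $w$-estimate the crucial point is that, although $w(t)$ only lies in $H^1_k$, the remainder $r^{[w]}(\tau(t),w(t))$ still takes values in $X_{k+m+\mu}$ and satisfies
\[
\big\|r^{[w]}(\tau(t),w(t))\big\|_{H^1_{k+m+\mu}}\le C\big(|\tau(t)|+\|w(t)\|_{H^1_k}\big)\,\|w(t)\|_{H^1_k}.
\]
This follows from \eqref{nonlinear-estimate-3} (with $w_2=0$) used with the splitting $k+m+\mu=k+(m+\mu)$, together with the hypothesis $m+\mu\le k$, so that the "light" quadratic factor can be measured in $H^1_{m+\mu}\supset H^1_k$; membership in $X_{k+m+\mu}$ holds because $r^{[w]}$ is built from $I-P_k$ and $(\psi_2,\cdot)_{L^2}$ annihilates its range, and $w_0\in X_{k+m+\mu}$ for the same reason. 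Inserting this together with the semigroup decay $\|e^{tL}v\|_{H^1_k}\le\Calg(1+t)^{-\frac{m^*}{2}}\|v\|_{H^1_{k+m+\mu}}$ from Theorem \ref{poly4:semigroup} into \eqref{wtau-integral}, and using $|\tau(s)|\le 2\varepsilon$, produces an inequality of exactly the shape required in Lemma \ref{poly:Gronwall} with $p=\frac{m^*}{2}$, $C_1=\Calg$, a suitable $C_2$, and forcing $\Calg\varepsilon(1+t)^{-m^*/2}$. Since $m>\tfrac{9}{2}$ forces $m^*>4$, we have $p>1$, and Lemma \ref{poly:Gronwall} gives $\|w(t)\|_{H^1_k}\le 3\Calg\varepsilon(1+t)^{-\frac{m^*-2}{2}}$, which is \eqref{w-estimate}.

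For $\tau$ I would use the quadratic bound $|r^{[\tau]}(\tau(s),w(s))|\le C(|\tau(s)|+\|w(s)\|_{H^1_k})\|w(s)\|_{H^1_k}\le C'\varepsilon(1+s)^{-\frac{m^*-2}{2}}$, which is the analogue of \eqref{nonlinear-estimate-5} with $w_2=0$ combined with boundedness of $P_k$ from Corollary \ref{cor213}. Because $m>\tfrac{9}{2}$ again gives $\frac{m^*-2}{2}>1$, the integral $\int_0^\infty r^{[\tau]}(\tau(s),w(s))\,ds$ converges absolutely; setting $\tau_\infty:=\tau_0+\int_0^\infty r^{[\tau]}(\tau(s),w(s))\,ds$ and estimating the tail by $\int_t^\infty(1+s)^{-\frac{m^*-2}{2}}\,ds\le C(1+t)^{-\frac{m^*-4}{2}}$ yields \eqref{tau-estimate}, while $|\tau_\infty|\le|\tau_0|+C'\varepsilon^2$ gives \eqref{tauinfty-estimate} and in particular closes the bootstrap bound $|\tau(t)|\le 2\varepsilon$. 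Regularity \eqref{tauwunique} and the stated Hölder continuity follow from standard smoothing for the analytic semigroup generated by $L$ (cf. \cite{Henry,Pazy}) applied to the mild solution of \eqref{w-equation}, once $s\mapsto r^{[w]}(\tau(s),w(s))$ is seen to be locally Hölder continuous, and then $\tau\in C^1$ from \eqref{tau-equation}. I expect the main obstacle to be exactly this weighted bookkeeping: arranging that $r^{[w]}$ lands in $X_{k+m+\mu}$ with a bound controlled purely by $\|w\|_{H^1_k}$ (which is why $m+\mu\le k$ is imposed), while simultaneously keeping $m^*>4$ so that $p=\frac{m^*}{2}>1$ in Lemma \ref{poly:Gronwall} and $r^{[\tau]}(\tau(\cdot),w(\cdot))\in L^1(0,\infty)$, so that the asymptotic phase $\tau_\infty$ exists.
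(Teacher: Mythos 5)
Your proposal follows essentially the same path as the paper's proof: local existence from Lemma~\ref{poly:existence}, the weighted Lipschitz bound for $r^{[w]}$ from Lemma~\ref{poly:estimates} with the split $k+m+\mu=(m+\mu)+k$ and $m+\mu\le k$, the semigroup decay from Theorem~\ref{poly4:semigroup} fed into the algebraic Gronwall Lemma~\ref{poly:Gronwall} with $p=\tfrac{m^*}{2}$, and then integrability of $r^{[\tau]}$ ($m^*>4$) to define $\tau_\infty$ and close the continuation argument. The only cosmetic differences are that the paper keeps only the non-decaying a priori bound ($\|w\|_{H^1_k}\le \Cexp\tilde\varepsilon$, $|\tau|\le C_0\xi_0$) in the definition of $t_\infty$ and uses the linear bound $|r^{[\tau]}(\tau,w)|\le C\|w\|_{H^1}$ (from \eqref{nonlinear-estimate-5} with $w_2=0$ and $\tau_1=\tau_2$), whereas you bootstrap the decay rate explicitly and invoke a quadratic bound on $r^{[\tau]}$; the quadratic bound does hold, but it comes from \eqref{nonlinear-estimate-1} applied to $r^{[f]}$ together with boundedness of $P_k$ and $S_k(\tau)^{-1}$ rather than from \eqref{nonlinear-estimate-5} itself, and you would then need to take the constant in your $|\tau(t)|$ bootstrap large enough (as the paper does with $C_0$) since the linear contribution $|\tau_0|$ alone is not helped by the quadratic gain. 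Both versions close; yours is a correct reformulation of the same argument.
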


\begin{proof}
Recall the constants $\Cexp$ from \eqref{semiexpest}, $\Calg$ from \eqref{poly:semi} and $C,\delta > 0$ from Lemma \ref{poly:estimates}. Choose $\varepsilon, \tilde{\varepsilon} > 0$ such that 
\begin{align} \label{poly:epscond}
	2 \Cexp \tilde{\varepsilon} < \delta, \quad  	\varepsilon < \min \left( \frac{\delta}{C_0}, \frac{\tilde{\varepsilon}}{6\Calg}, \frac{2^{-\frac{m-2}{2}} (m-2)}{9m\Calg^2CC_0} \right), \quad C_0 := 2 + \frac{12\Calg C}{m-4}.
\end{align}
We abbreviate $\xi_0:= \| (\tau_0,w_0) \|_{\R \times H^1_{k+m+\mu}} < \varepsilon$ and let
\begin{align*}
	t_\infty : = \sup \Big\{ T>0:\, & \exists ! (\tau,w) \in C([0,T), \R \times X_k) \text{ satisfying \eqref{wtau-integral} on }[0,T), \\
	&  \text{and } \| w(t) \|_{H^1_k} \le \Cexp \tilde{\varepsilon},\, |\tau(t)| \le C_0 \xi_0,\, t \in [0,T) \Big\}.
\end{align*}
Then Lemma \ref{poly:existence} with $\varepsilon_0 = \tilde{\varepsilon}$ and $\varepsilon_1 = \frac{C_0 \xi_0}{2} < \delta$ implies $t_\infty \ge t_\star = t_\star(\varepsilon_0, \varepsilon_1)$. Using Theorem \ref{poly4:semigroup} and Lemma \ref{poly:estimates} (with $q = m + \mu$ and $p = k$) we estimate for all $0 \le t < t_\infty$
\begin{align*}
	& \| w(t) \|_{H^1_k} \le \| e^{tL} w_0 \|_{H^1_k} + \int_0^t \| e^{(t-s)L} \, r^{[w]}(\tau(s),w(s)) \|_{H^1_k} ds \\
	& \le \frac{\Calg}{(1+t)^{\frac{m^*}{2}}} \| w_0 \|_{H^1_{k+m+\mu}} + \int_0^t \frac{\Calg}{(1+t-s)^{\frac{m^*}{2}}} \| r^{[w]}(\tau(s),w(s) \|_{H^1_{k+m+\mu}} ds \\
		& \le \frac{\Calg}{(1+t)^{\frac{m^*}{2}}} \| w_0 \|_{H^1_{k+m+\mu}} + C \Calg \int_0^t \frac{|\tau(s)| + \| w(s) \|_{H^1_{m + \mu}}}{(1+t-s)^{\frac{m^*}{2}}} \| w(s) \|_{H^1_k} ds \\
	& \le \frac{\Calg}{(1+t)^{\frac{m^*}{2}}} \| w_0 \|_{H^1_{k+m+\mu}} + C \Calg \int_0^t \frac{|\tau(s)| + \| w(s) \|_{H^1_{k}}}{(1+t-s)^{\frac{m^*}{2}}} \| w(s) \|_{H^1_k} ds \\
	& \le \frac{\Calg}{(1+t)^{\frac{m^*}{2}}}\xi_0 + C \Calg C_0 \int_0^t \frac{\xi_0 + \| w(s) \|_{H^1_{k}}}{(1+t-s)^{\frac{m^*}{2}}} \| w(s) \|_{H^1_k} ds.
\end{align*}
Thus the Gronwall estimate in Lemma \ref{poly:Gronwall} (with $p = \frac{m^*}{2}$) implies due to \eqref{poly:epscond}
\begin{align} \label{poly:proof1}
	\| w(t) \|_{H^1_k} \le \frac{3\Calg\xi_0 }{(1+t)^{\frac{m^*-2}{2}}} \le \frac{\tilde{\varepsilon}}{2}, \quad \forall \,t \in [0,t_\infty).
\end{align}
This yields, due to $m^* > 4$ and \eqref{poly:epscond}, for all $0 \le t < t_\infty$
\begin{align}
\begin{split} \label{poly:proof2}
	|\tau(t)| & \le |\tau_0| + \int_0^t |r^{[\tau]}(\tau(s),w(s))|ds \le \xi_0 + C \int_0^t \| w(s) \|_{H^1} ds \\
	& \le \xi_0 + C \int_0^t \| w(s) \|_{H^1_k} ds \le \xi_0 + 3C\Calg \xi_0 \int_0^t (1+s)^{-\frac{m^*-2}{2}} ds \\
	& \le \xi_0 + \frac{6C\Calg \xi_0}{m^*-4} = \frac{C_0 \xi_0}{2}.
\end{split}
\end{align}
Next we show $t_\infty = \infty$. Assuming the contrary, $t_\infty < \infty$, then the estimates \eqref{poly:proof1}, \eqref{poly:proof2} imply
\begin{align*}
	\| w(t_\infty - \tfrac{1}{2}t_\star) \|_{H^1_k} \le \frac{\tilde{\varepsilon}}{2} = \varepsilon_0, \quad |\tau(t_\infty - \tfrac{1}{2}t_\star)| \le \frac{C_0 \xi_0}{2} = \varepsilon_1.  
\end{align*}
Setting $\tilde{w}_0 = w(t_\infty - \tfrac{1}{2}t_\star)$ and $\tilde{\tau}_0 = \tau(t_\infty - \tfrac{1}{2} t_\star)$ we can apply Lemma \ref{poly:existence} to the system \eqref{wtau-integral} with initial values $\tilde{\tau}_0, \tilde{w}_0$ and obtain a solution $(\tilde{\tau},\tilde{w})$ on $[0,t_\star)$ such that $\| \tilde{w}(t) \|_{H^1_k} \le \Cexp \tilde{\varepsilon}$ and $|\tau(t)| \le C_0 \xi_0$ for all $t \in [0,t_\star)$. Then the concatenated functions
\begin{align*}
	(\bar{\tau},\bar{w})(t) := \begin{cases} (\tau,w)(t), & t \in [0,t_\infty - \tfrac{1}{2} t_\star ] \\
	(\tilde{\tau}, \tilde{w})(t - t_\infty + \tfrac{1}{2}t_\star), & t \in (t_\infty - \tfrac{1}{2}t_\star,t_\infty + \tfrac{1}{2}t_\star)
	\end{cases}
\end{align*}
define a solution of \eqref{wtau-integral} on $[0, t_\infty + \tfrac{1}{2} t_\star )$ with $\| \bar{w}(t) \|_{H^1_k} \le \Cexp \tilde{\varepsilon}$ and $| \bar{\tau}(t) | \le C_0 \xi_0$ for all $t \in [0,t_\infty + \tfrac{1}{2} t_\star)$. This contradicts the definition of $t_\infty$, so that $t_\infty = \infty$ follows. The estimate \eqref{w-estimate} is a consequence of \eqref{poly:proof1}. For \eqref{tau-estimate} we estimate with Lemma \ref{poly:estimates} and \eqref{poly:proof1} the integral
\begin{align} \label{poly:proof3}
\begin{split}
	& \int_t^\infty |r^{[\tau]}(\tau(s),w(s))| ds \le C \int_t^\infty \| w(s) \|_{H^1_k} ds \\
	& \le 3 C \Calg \xi_0 \int_t^\infty (1 + s)^{- \frac{m^* - 2}{2}} ds = \frac{6C\Calg \xi_0}{m^*-4}(1+t)^{-\frac{m^*-4}{2}}.
\end{split}
\end{align}
Thus, the asymptotic phase
\begin{align*}
	\tau_\infty := \tau_0 + \int_0^\infty r^{[\tau]}(\tau(s), w(s)) ds
\end{align*}
exists and \eqref{tau-estimate} holds due to \eqref{poly:proof3}. The estimate \eqref{tauinfty-estimate} is a simple consequence of \eqref{tau-estimate}. Next we show the regularity of the solution $(\tau,w)$. From Lemma \ref{poly:estimates}
one infers $r^{[\tau]}(\tau(\cdot),w(\cdot)) \in C([0,\infty),\R)$, so that $\tau \in C^1([0,\infty),\R)$. Now let $r(t) := r^{[w]}(\tau(t),w(t))$ for $t \in [0,\infty)$. Then $r$ is Lipschitz continuous since with Lemma \ref{poly:estimates} we have for all $0 \le s < t < \infty$
\begin{align*}
	& \| r(t) - r(s) \|_{L^2_k} \le C | z(t) - z(s)| + C \| w(t) - w(s) \|_{H^1_k} \\
	& \le C \int_s^t | r^{[\tau]}(\tau(\sigma), w(\sigma)) | d\sigma + C \int_s^t \| e^{(t -\sigma)L} r^{[w]}(\tau(\sigma), w(\sigma)) \|_{H^1_k} d\sigma \\
	& \le C \int_s^t | r^{[\tau]}(\tau(\sigma), w(\sigma)) | d\sigma + C\Calg \int_s^t \| r^{[w]}(\tau(\sigma), w(\sigma)) \|_{H^1_{k+m+\mu}} d\sigma \\
	& \le C^2 \int_s^t \| w(\sigma)) \|_{H^1_k} d\sigma + C^2 \Calg \int_s^t |\tau(\sigma)| + \| w(\sigma)) \|_{H^1_{k}} d\sigma \le \tilde{C} (t-s)
\end{align*}
for some $\tilde{C} > 0$. In particular, $r$ is H\"older continuous with index $\alpha \in (0,1)$, i.e., $r \in C^\alpha([0,\infty),X_k)$, and for arbitrary $s > 0$
\begin{align*}
	\int_0^s \| r(t) \|_{L^2_k} dt & = \int_0^s \| r^{[w]}(\tau(t), w(t)) \|_{L^2_k} dt \le C \int_0^s \| w(t) \|_{H^1_k} dt  < \infty.
\end{align*}
The standard theory of parabolic equations, e.g. \cite[Theorem 1.2.1]{Amann} and \cite[Theorem 3.2.2]{Henry}, now implies that $(\tau,w)$ is the unique solution of \eqref{tau-equation}, \eqref{w-equation} in the class \eqref{tauwunique} and that $w \in C^\alpha((0,\infty),X^2_k) \cap C^{1+\alpha}((0,\infty),X_k) \cap C^1([0,\infty),X_k)$ holds for every $\alpha \in (0,1)$. This completes the proof.
\end{proof}

\begin{remark} \label{remark:exponent}
Let us recall the restriction on the values of $m$ and $k$ that determines the required algebraic decay of the initial perturbation $(\tau_0,w_0)$ in Theorem \ref{poly:Stabilitywt}. Roughly speaking, Theorem \ref{poly:semi} guarantees an algebraic decay of the semigroup $\| e^{tL} \|_{X^1_k \rightarrow X^1_{k + m + \mu}} \lesssim t^{-\frac{m^*}{2}}$ for arbitrary $m,k \ge 1$. Then the Gronwall Lemma \ref{poly:Gronwall} gives an algebraic decay of $w$ in $H^1_k$ of one degree less, i.e., $\| w \|_{H^1_k} \lesssim t^{-(\frac{m^*}{2} - 1)}$. This is caused by the semigroup which involves an integration in time and hence costs another degree. The convergence of the shift $\tau$ to the asymptotic phase $\tau_\infty$ needs an additional integration in time, so that $|\tau(t) - \tau_\infty| \lesssim t^{- (\frac{m^*}{2} - 2)}$. This requires $m^* > 4$ and therefore $m > 4 +\frac{1}{2} = \frac{9}{2}$. To ensure the assumptions for the Gronwall argument in the proof of Theorem \ref{poly:Stabilitywt} we need $m + \mu \le k$ which leads to $\frac{9}{2} < m < m + \mu \le k$. Summarizing, the weakest possible condition on the initial data is to make $\| (\tau_0,w_0) \|_{H^1_{9 + \mu}}$ small so that Theorem \ref{poly:Stabilitywt} applies with $m> k > \frac{9}{2}$ and $\mu$ sufficiently small.
\end{remark}

From Theorem \ref{poly:Stabilitywt} we can now conclude our main result Theorem \ref{mainresult}.

\begin{proof}[Proof of Theorem \ref{mainresult}]
Let $\delta > 0$ be such that by Lemma \ref{poly:lemmatrafo} the map $T_k^{-1}$ (resp. $\Pi_k^{-1}$) exists and is diffeomorphic on $B_\delta := \{ u \in L^2_k : \| u \|_{L^2_k} \le \delta \}$ (resp. on $P_k(B_\delta)$), i.e., we may choose $\delta$ so small such that $T_k^{-1}(B_\delta) \subset \U_\tau \times \U_w$ (resp. $\Pi^{-1}(P_k(B_\delta)) \subset U_\tau$). Let
\begin{align*}
	C_1 := \sup_{v \in B_\delta} \frac{|\Pi_k^{-1}(P_k(v))|}{\| v \|_{L^2_k}} 
\end{align*}
and let $C_2 > 0$ such that $\| v_\star(\cdot - \tau_1) - v_\star(\cdot - \tau_2) \|_{H^1_{k + m + \mu}} \le C_2 |\tau_1 - \tau_2|$ for all $\tau_1,\tau_2 \in \Pi_k^{-1}(P_k(B_\delta))$. Next we choose $\varepsilon > 0$ such that every solution $(\tau,w)$ of \eqref{tau-equation}, \eqref{w-equation} from Theorem \ref{poly:Stabilitywt} satisfies $w(t) \in T_k^{-1}(B_\delta)$ and $\tau(t) \in \Pi^{-1}_k(P_k(B_\delta))$ for all $t \in [0,\infty)$. We restrict the size of the initial perturbation to
\begin{align*}
	\varepsilon_0 < \min \Big( \frac{\varepsilon}{1 + C_1 + C_1C_2}, \frac{\delta}{2(2C_1K + C_1 + K)} \Big)
\end{align*}
Set $(\tau_0,w_0) := T_k^{-1}(u_0)$ so that we have
\begin{alignat*}{2}
	& \tau_0 = \Pi_k^{-1}(P_ku_0), \quad && |\tau_0| \le C_1 \| u_0 \|_{L^2_{k}}, \\
	& w_0 = u_0 + v_\star - v_\star(\cdot - \tau_0), \quad && \| w_0 \|_{H^1_{k + m + \mu}} \le (1 + C_1C_2) \| u_0 \|_{H^1_{k+m+\mu}}.
\end{alignat*}
This implies $\| (\tau_0, w_0) \|_{H^1_{k+m+\mu}} \le (C_1 + 1 + C_1C_2) \varepsilon_0 < \varepsilon$ so that the solution $(\tau,w)$ of \eqref{tau-equation}, \eqref{w-equation} with initial value $(\tau_0,w_0)$ from Theorem \ref{poly:Stabilitywt} and the asymptotic phase $\tau_\infty$ satisfy \eqref{decayestimates}. Then the function
\begin{align*}
	u(t) = v_\star(\cdot - \tau(t)) + w(t)
\end{align*}
belongs to $C([0,\infty),H^2_k) \cap C^1([0,\infty),L^2_k)$ and satisfy \eqref{perturbsys} by the computation in Section \ref{sec:Decomposition}. It remains to show the uniqueness of $u$. We note that for all $t \in [0,\infty)$ we have by Theorem \ref{poly:Stabilitywt}
\begin{align*}
	& \| u(t) - v_\star \|_{L^2_k} \le \| v_\star(\cdot - \tau(t)) - v_\star \|_{L^2_k} + \| w(t) \|_{L^2_k} \\
	& \le C_1 |\tau(t) - \tau_\infty| + C_1 |\tau_\infty| + \| w(t) \|_{L^2_k} \le  (2C_1K + C_1 + K) \varepsilon_0 < \frac{\delta}{2}.
\end{align*}
Let $\tilde{u}$ be another solution of \eqref{perturbsys} on $[0,T)$ for some $T > 0$ and let
\begin{align*}
	t_0 := \sup \{ t \in [0,T): \| \tilde{u} - v_\star \|_{L^2_k} \le \delta \text{ on } [0,t) \}.
\end{align*}
Then there are $(\tilde{\tau}, \tilde{w})$ solving \eqref{tau-equation}, \eqref{w-equation} on $[0,t_0)$ with initial value $(\tau_0,w_0)$ and such that $\tilde{u}(t) = v_\star(\cdot - \tilde{\tau}(t)) + \tilde{w}(t)$ for all $t \in [0,t_0)$. But then $(\tau,w) = (\tilde{\tau},\tilde{w})$ and $u = \tilde{u}$ on $[0,t_0)$ by the uniqueness of $(\tau,w)$. Assuming $t_0 < T$, we have for $t \in [0,t_0)$
\begin{align*}
	\frac{\delta}{2} \ge \| u(t) - v_\star \|_{L^2_k} = \| \tilde{u}(t) - v_\star \|_{L^2_k} \rightarrow \delta \quad \text{as }t \rightarrow t_0. 
\end{align*}
Thus $t_0 = T$ and $u = \tilde{u}$ on $[0,T)$. This finishes the proof.
\end{proof}

\section*{Acknowledgements}
  This paper is an extended version of parts of the second author’s PhD Thesis \cite{Doeding}.
   His present work was funded by the Deutsche Forschungsgemeinschaft (DFG, German Research Foundation) under Germany's Excellence Strategy – EXC-2047/1 – 390685813.
  The work of the first author was funded by the Deutsche Forschungsgemeinschaft
(DFG, German Research Foundation) – SFB 1283/2 2021 – 317210226.

\appendix

\sect{Roughness of exponential trichotomies}
\label{appendixA}
Considering a linear differential operator $\A w = w_x - A w$ on an interval $J \subset \R$ and with $A \in C_b(J, \R^{n,n})$. Let $\S: J \times J \rightarrow \R^{n,n}$ be the solution operator of $\A$, i.e., $w = S(\cdot,y) \xi \in C^1(J,\R^n)$ solves $\A w = w_x - A w$ in $J$ with $w(y) = \xi \in \R^2$. Then the operator $\A$ is said to have an \textbf{ordinary exponential trichotomy} on $J$ with exponents $\alpha < \nu < \beta$ if is there is a constant $K > 0$ and projectors $P_\kappa: J \rightarrow \R^{n,n}$, $\kappa = \st,\c,\un$ of rank $m_\kappa$ such that $P_\st + P_\c + P_\un = I$ and such that for all $x,y \in J$ there hold
\begin{align*}
	& S(x,y) P_\kappa(y) = P_\kappa(x) S(x,y), \quad \kappa = \st,\c,\un, \\
	& |S(x,y) P_\st(y)| \le K e^{\alpha(x-y)}, \quad |S(x,y) P_\c(y)| \le K e^{\nu(x-y)}, \quad x \ge y, \\
	& |S(x,y) P_\un(y)| \le K e^{\beta(x-y)}, \quad |S(x,y) P_\c(y)| \le K e^{\nu(x-y)}, \quad x \le y.
\end{align*}
In the case we call $(K,\alpha,\nu,\beta,P_{\st,\c,\un})$ the \textbf{data of the ordinary exponential trichotomy}. If $m_\c = 0$ we speak of a \textbf{shifted exponential dichotomy} and if, in addition, $\alpha < 0 < \beta$ we speak of an \textbf{exponential dichotomy}. There are several results regarding the preservation of an exponential trichotomy (or dichotomy) under suitable perturbations of the matrix-valued function $A$; see e.g. \cite{Coppel}, \cite{Hale}, \cite{Beyn91}. We prove a new so called Roughness Theorem for ordinary exponential trichotomies.

\begin{theorem}[Roughness theorem] \label{RoughTricho}
Let $\Omega \subset \R^n$ be a bounded domain, $\K \in \{ \R,\C \}$ and let $\A(s) = \partial_x - A(s,\cdot)$, $s \in \Omega$, $A\in C_b(\Omega \times \R_+, \K^{n,n})$, have an ordinary exponential trichotomy on $\R_+$ for every $s \in \Omega$ with data $(K(s), \alpha(s),\nu(s),\beta(s),P_{\st,\c,\un}(s))$, $\sup_{s \in \Omega} K(s) < \infty$ depending continuously/analytically on $s \in \Omega$. Further assume that $P_\c(s,x)$ is of rank $m_\c = 1$ and has the form $P_\c(s,x) = z(s,x) \psi(s,x)^H$ where $\mathcal{A}(s) z(s,\cdot) = 0$ on $\R_+$ and such that there are $C_1,C_2 > 0$ with
\begin{align*}
	C_1\le e^{-\nu(s) x}|z(s,x)| \le C_2 , \quad  C_1 \le e^{\nu(s) x} |\psi(s,x)| \le C_2 \quad \forall\,x \ge 0,\, s \in \Omega.
\end{align*}
Let $B \in C(\R_+,\K^{n,n})$ with $|B(x)| \le C_B e^{- \varepsilon x}$, $x \ge 0$ for some $0 < 2 \varepsilon < \inf_{s \in \Omega}\min (\nu(s)-\alpha(s), \beta(s) - \nu(s))$. Then the perturbed operator $\tilde{\A}(s) = \A(s) - B$ has an ordinary exponential trichotomy on $\R_+$ with data $(\tilde{K}, \tilde{\alpha}(s), \nu(s),\tilde{\beta}(s), \tilde{P}_{\st,\c,\un}(s))$ depending continuous/analytically on $s \in \Omega$ and with $\tilde{K}$ independent of $s \in \Omega$. In particular, $\tilde{\alpha}(s)$ and $\tilde{\beta}(s)$ are given by $\tilde{\alpha}(s) = \alpha(s) + 2\delta K(s)$ and $\tilde{\beta}(s) = \beta(s) - 2\delta K(s)$ where
\begin{align*}
	\delta \le \frac{\varepsilon}{4\max(K_\infty ,K_\infty^2)}, \quad K_\infty = \sup_{s \in \Omega} K(s).
\end{align*}
\end{theorem}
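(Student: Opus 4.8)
The plan is to produce the three perturbed invariant families of $\tilde\A(s)=\A(s)-B$ by variation-of-constants fixed point arguments, handling the two hyperbolic directions by the classical roughness machinery and the one-dimensional central direction separately, so that the central exponent is preserved \emph{exactly} as $\nu(s)$. First I would regroup the unperturbed trichotomy on $\R_+$ into two shifted exponential dichotomies: one with stable projector $P_\st(s,\cdot)$ and exponent $\alpha(s)$ against ``unstable'' projector $P_\c(s,\cdot)+P_\un(s,\cdot)$ with backward exponent $\nu(s)$, and one with stable projector $P_\st(s,\cdot)+P_\c(s,\cdot)$ and forward exponent $\nu(s)$ against unstable projector $P_\un(s,\cdot)$ with exponent $\beta(s)$; both gaps $\nu(s)-\alpha(s)$ and $\beta(s)-\nu(s)$ exceed $2\varepsilon$ by hypothesis, and both grouped propagator bounds follow from the trichotomy estimates by absorbing the better exponent. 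The classical roughness theorem \cite[Proposition 4.1]{Coppel}, applied to the bounded (indeed exponentially decaying) perturbation $-B$, then gives perturbed dichotomies of $\tilde\A(s)$ with projectors $\tilde P_\st(s,\cdot)$, $\tilde P_\un(s,\cdot)$ and complementary projectors whose ranges are $O(\delta)$-close to the unperturbed ones; the degradation of the exponents is exactly $2\delta K(s)$, which fixes $\tilde\alpha(s)=\alpha(s)+2\delta K(s)$, $\tilde\beta(s)=\beta(s)-2\delta K(s)$, and $\tilde K$ uniform in $s$ by way of $K_\infty=\sup_s K(s)<\infty$ and the smallness $\delta\le\varepsilon/(4\max(K_\infty,K_\infty^2))$ (with $\delta$ controlling the size of $B$).

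\textbf{The central direction.} The crux is to show that the propagator restricted to the perturbed central line obeys $|\tilde\S(s,x,y)\tilde P_\c(s,y)|\le\tilde K e^{\nu(s)(x-y)}$ for \emph{all} $x,y\ge0$, with the \emph{unchanged} exponent $\nu(s)$, and not the slightly degraded exponents inherited from the two dichotomies. Here the structural hypothesis $P_\c(s,x)=z(s,x)\psi(s,x)^H$ with $\A(s)z(s,\cdot)=0$ and $C_1\le e^{-\nu(s)x}|z(s,x)|\le C_2$, $C_1\le e^{\nu(s)x}|\psi(s,x)|\le C_2$ is essential. I would build a perturbed central solution $\tilde z(s,\cdot)$ of $\tilde\A(s)\tilde z=0$ in the form $\tilde z=z+v$, where $v$ solves $\A(s)v=B(z+v)$ and is sought in the weighted space $\{v:\sup_{x\ge0}e^{-\nu(s)x}|v(x)|<\infty\}$ via the Green's function of $\A(s)$ that excludes the central direction (forward stable integral minus backward unstable integral, the central contribution normalized away). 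The decay $|B(x)|\le C_B e^{-\varepsilon x}$ together with $2\varepsilon<\min(\nu-\alpha,\beta-\nu)$ makes the relevant integrals converge at the rate $e^{\nu(s)x}$, turns the operator into a contraction of norm $\le\tfrac12$ once $C_B$ (equivalently $\delta$) is small, and forces $v$ to be genuinely smaller than $z$ in the weighted norm; hence $\tilde z$ satisfies $\tilde C_1\le e^{-\nu(s)x}|\tilde z(s,x)|\le\tilde C_2$ with $\tilde C_1=C_1/2$, $\tilde C_2=2C_2$. An identical construction for the adjoint equation (propagator $\S(s,y,x)^H$), whose unperturbed central solution is $\psi$ carrying the opposite weight, produces $\tilde\psi(s,\cdot)$ with the analogous two-sided bound; after renormalizing $\tilde\psi(s,x)^H\tilde z(s,x)\equiv1$ we set $\tilde P_\c(s,x):=\tilde z(s,x)\tilde\psi(s,x)^H$. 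Because $\tilde z$ is a genuine solution, $\tilde\S(s,x,y)\tilde P_\c(s,y)=\tilde z(s,x)\tilde\psi(s,y)^H$, which is bounded by $\tilde C_2^{\,2}e^{\nu(s)(x-y)}$ for all $x,y$ — precisely the required central estimate — and its $s$-derivatives inherit the same exponential bound by Cauchy's estimate.

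\textbf{Assembly and parameter dependence.} Since the perturbed stable, central, and unstable subspaces are $O(\delta)$-close to the transversal unperturbed ones and are individually flow-invariant, a standard general-position argument shows they have the correct dimensions $m_\st$, $m_\c=1$, $m_\un$, remain in direct sum, and yield $\tilde P_\st+\tilde P_\c+\tilde P_\un=I$; one then checks that the $\tilde P_\c$ produced by the two-dichotomy splitting agrees with the rank-one projector $\tilde z\tilde\psi^H$ (both project onto the same invariant line along the same complement). The estimates \eqref{trichoplus}-type bounds for $\tilde P_\st$ and $\tilde P_\un$ are exactly the two perturbed dichotomies, and the central bound was just obtained. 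Continuity (resp. analyticity) in $s\in\Omega$ of all perturbed data follows from the uniform contraction principle: the fixed point operators depend continuously (resp. analytically) on $s$ through $A(s,\cdot)$, $P_{\st,\c,\un}(s,\cdot)$, $z(s,\cdot)$, $\psi(s,\cdot)$, $\S(s,\cdot,\cdot)$ with contraction constants uniform in $s$, and the bounds $K_\infty<\infty$, $C_1$, $C_2$ independent of $s$ give $\tilde K$ uniform.

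\textbf{Main obstacle.} I expect the delicate point to be entirely in the central direction: one must (i) construct $\tilde z$ as a true solution rather than merely an invariant-subspace vector, which requires choosing the Green's function that suppresses the central mode and exploiting the gap $2\varepsilon<\min(\nu-\alpha,\beta-\nu)$; (ii) retain its two-sided $e^{\nu(s)x}$ control, which is exactly what pins the perturbed central exponent to $\nu(s)$; and (iii) reconcile $\tilde P_\c=\tilde z\tilde\psi^H$ with the projector coming from the two dichotomies so that the three perturbed ranges remain flow-invariant and complementary on the half-line $\R_+$, where the unstable subspace is not canonically determined. Tracking all constants so that $\tilde K$, $\tilde\alpha(s)=\alpha(s)+2\delta K(s)$, $\tilde\beta(s)=\beta(s)-2\delta K(s)$ come out uniformly in $s$ is routine once the central construction is in place.
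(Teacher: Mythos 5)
Your outline reproduces the paper's strategy essentially step by step: regroup the trichotomy into two shifted dichotomies ($P_\st$ vs.\ $P_\c+P_\un$ and $P_\st+P_\c$ vs.\ $P_\un$), feed each into Coppel's roughness theorem to get $\tilde P_\st$, $\tilde P_\un$ with exponents degraded by $2\delta K(s)$, build a perturbed central solution $\tilde z$ and its adjoint $\tilde\psi$ by a variation-of-constants fixed point, set $\tilde P_\c=\tilde z\tilde\psi^H$, and identify that rank-one projector with the intersection of the two perturbed dichotomy splittings. So the route is the same as the paper's, not a genuinely different one. That said, there is one genuine gap and one imprecise step that would sink a literal reading of your construction.

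The gap is the smallness of the perturbation. You write that the fixed-point operator ``turns \ldots into a contraction of norm $\le\tfrac12$ once $C_B$ (equivalently $\delta$) is small,'' but the theorem does \emph{not} assume $C_B$ is small: $B$ is only required to decay like $C_Be^{-\varepsilon x}$ with $C_B$ arbitrary. Moreover $\delta$ is a free parameter entering the conclusion (it shapes $\tilde\alpha,\tilde\beta$), not a name for $C_B$. Without an extra idea, neither Coppel's roughness theorem nor your contraction argument applies, because on $[0,\infty)$ the perturbation is only bounded by $C_B$, not by $\delta$. The paper's resolution is to restrict the whole construction to $J=[\tau,\infty)$ with $\tau$ chosen so large that $4\max(K_\infty,K_\infty^2)\,C_Be^{-\varepsilon\tau}\le\varepsilon$, set $\delta:=C_Be^{-\varepsilon\tau}$, and then transport back to $[0,\tau]$ by the (bounded) flow. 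This interval-shifting step is not cosmetic; it is what makes both the Coppel hypothesis and your contraction constant hold, and you need to say it.

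A secondary issue is the description of the Green's function for the central construction. You say you would use the Green's function ``that excludes the central direction \ldots the central contribution normalized away.'' If you literally exclude $P_\c$ from the variation-of-constants formula, the fixed point will solve only the projected equation $(I-P_\c)\A v=(I-P_\c)B\tilde z$, not $\A v=B\tilde z$, and $\tilde z=z+v$ will not be a true solution of $\tilde\A\tilde z=0$. The paper's integral operator instead uses $P_\st$ forward and the \emph{grouped} projector $P_\c+P_\un$ backward: after shifting by $\nu(s)$ the central mode carries exponent $0$ and the unstable mode carries $\beta-\nu>0$, so the backward integral against $B$ converges precisely because $|B(x)|\le C_Be^{-\varepsilon x}$ and $2\varepsilon<\beta-\nu$. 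This is what forces the perturbed central exponent to stay exactly $\nu(s)$ rather than being degraded. Your ``forward stable minus backward unstable, center normalized away'' would either fail to produce a true solution or would require an additional solvability condition you have not stated. Once you correct that to ``forward stable, backward central-plus-unstable (shifted by $\nu$),'' and add the interval-shifting step above, the rest of your plan — the $O(e^{-\varepsilon x})$ closeness estimates, the adjoint construction of $\tilde\psi$, the two-sided bounds pinning the exponent at $\nu(s)$, and uniform/analytic dependence via the uniform contraction principle — matches the paper's argument.
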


\begin{proof}
We denote by $C > 0$ a generic constant independent on $s \in \Omega$ and $x \ge 0$. It is sufficient to prove the assertion for the interval $J = [\tau,\infty)$ instead of $\R_+$ where $\tau \ge 0$ is so large that the following condition is satisfied:
\begin{align} \label{RoughnessCondition}
	4 \max(K_\infty,K_\infty^2) C_B e^{-\varepsilon \tau} \le \varepsilon.
\end{align}
Consider the shifted operator $\A_\nu(s) = \A(s) - \nu(s) I$ which has an ordinary exponential trichotomy on $J$ with data $(K(s), \alpha(s) - \nu(s), 0, \beta(s) - \nu(s), P_{\st,\c,\un}(s))$. Setting $Q_{\st}(s) := P_\st(s)$, $Q_{\un}(s) := P_\c(s) + P_\un(s)$ and $R_\st(s) := P_\st(s) + P_\c (s)$, $R_{\un}(s) := P_\un(s)$ we obtain that $\A_\nu(s)$ has a shifted exponential dichotomy on $J$ with data $(K(s), \alpha(s)-\nu(s), 0, Q_{\st,\un}(s))$ and a shifted exponential dichotomy on $J$ with data $(K(s), 0, \beta(s) - \nu(s), R_{\st,\un})$. With $\delta := C_B e^{-\varepsilon \tau}$ the roughness theorem for exponential dichotomies \cite[Chap. 4, Prop. 1]{Coppel} implies due to \eqref{RoughnessCondition} that the perturbed operator $\tilde{\A}_\nu(s) - B$ has a shifted exponential dichotomy on $J$ with data
\begin{align*}
	(\tfrac{5}{2}K(s), \alpha(s) - \nu(s) + 2\delta K(s), - 2\delta K(s), \tilde{Q}_{\st,\un}(s))
\end{align*}
and a shifted exponential dichotomy on $J$ with data
\begin{align*}	
	(\tfrac{5}{2}K(s), 2\delta K(s), \beta(s) - \nu(s) - 2\delta K(s), \tilde{R}_{\st,\un}(s)).
\end{align*}
Furthermore, it is shown in \cite[Prop. 2.4]{Beyn91} that the projectors satisfy the estimates
\begin{align} \label{RoughnessProjEst}
	|Q_\kappa(s,x) - \tilde{Q}_\kappa(s,x)|, |P_\kappa(s,x) - \tilde{P}_\kappa(s,x)| \le C e^{- \varepsilon x}, \quad x \in J, \, \kappa = \st,\un.
\end{align}
In addition, the ranks of the projectors are preserved, i.e., $\mathrm{rank} (Q_\kappa(s,x)) = \mathrm{rank} (\tilde{Q}_\kappa(s,x))$, $\mathrm{rank} (R_\kappa(s,x)) = \mathrm{rank} (\tilde{R}_\kappa(s,x))$, $\kappa = \st,\un$ for all $x \in J$. This implies, due to $m_\c (s) = 1$, that $\ran(\tilde{Q}_\st(s,\tau)) \subset \ran(\tilde{R}_\st(s,\tau))$ with codimension equal to $1$ so that there is $\tilde{w}(s,\tau) \in \K^n$ with $\K^n = \ran(\tilde{Q}_\st(s,\tau)) \, \oplus \, \mathrm{span}(\tilde{w}(s,\tau)) \, \oplus \, \ran(\tilde{R}_\un(s,\tau))$. Next, we show that $\tilde{w}(s,\tau)$ can be chosen such that $\tilde{w}(s,x) = \tilde{\S}_\nu(s,x,\tau) \tilde{w}(s,\tau)$ satisfies
\begin{align} \label{RoughnessExpEst}
	|w(s,x) - \tilde{w}(s,x)| \le C e^{-\varepsilon x}
\end{align}
where $w(s,x) = e^{-\nu(s) x} z(s,x)$ and where $\tilde{\S}_\nu(s,\cdot,\cdot)$ is the solution operator of $\tilde{\A}_\nu(s)$. \\
Consider for $\xi \in C_b(J,\K^n)$ the map
\begin{align} \label{RoughnessT}
\begin{split}
	T(s,\xi)(x) = w(s,x) & + \int_{\tau}^x S_{\nu}(s,x,y) P_\st(s,y) B(y) \xi(y) dy \\
	& - \int_{x}^\infty S_{\nu}(s,x,y) (P_\c(s,y) + P_\un(s,y)) B(y) \xi(y) dy.
\end{split}
\end{align}
Due to the assumptions we have $\nu(s) - \alpha(s) \ge 2\varepsilon$ so that we have the estimate
\begin{align} \label{Tbound}
\begin{split}
	|T(s, \xi )(x)| & \le C_2 + K(s) \int_{\tau}^x e^{(\alpha(s) - \nu(s))(x-y)} C_B e^{- \varepsilon y} dy \| \xi \|_{\infty} + K(s) \int_x^\infty C_B e^{-\varepsilon y} dy \| \xi \|_{\infty} \\
	& \le C_2 + \frac{K(s)C_B}{\nu(s) - \alpha(s) - \varepsilon} e^{- \varepsilon x} \| \xi \|_{\infty} + \frac{K(s)C_B}{\varepsilon} e^{- \varepsilon x} \| \xi \|_{\infty} \le C_2 + C_T e^{-\varepsilon x} \| \xi \|_{\infty}
\end{split}
\end{align}
with $C_T := \frac{2}{\varepsilon} K_\infty C_B$ independent on $s$. Similarly, we have for $\xi_1,\xi_2 \in C_b(J,\K^n)$ the estimate $|T(s, \xi_1)(x) - T(s, \xi_2)(x)| \le C_T e^{-\varepsilon \tau} \| \xi_1 - \xi_2 \|_{\infty}$. Since $C_T e^{-\varepsilon \tau} < 1$, due to \eqref{RoughnessCondition}, $T(s,\cdot)$ is a contraction on $C_b(J,\K^n)$ so that there exists $\xi(s,\cdot) \in C_b(J,\K^n)$ such that $\tilde{\xi}(s,x) = w(s,x) + T(s,\xi(s,x))$ for all $x \in J$. Further, $\xi(s,\cdot)$ depends continuously/analytically on $s \in \Omega$, since $T(s,\cdot)$ does. Using the definition of $T$ from \eqref{RoughnessT} one shows that $\xi(s,\cdot)$ solves $\tilde{\A}_\nu(s) \xi(s,\cdot) = 0$ on $J$ and as in \eqref{Tbound}
\begin{align*}
	| \xi(s,x) - w(s,x) | \le C_T e^{- \varepsilon x} \| \xi(s,\cdot) \|_{\infty} \le \frac{C_T}{1 - C_T e^{-\varepsilon \tau}} e^{- \varepsilon x} \| w(s,\cdot) \|_{\infty} \le C e^{-\varepsilon x}
\end{align*}
where we used the a-priori bound $\| \xi(s,x) \|_{\infty} \le \frac{1}{1 - C_T e^{-\varepsilon \tau}} \| w(s,\cdot) \|_{\infty} \le \frac{C_2}{1 - C_T e^{-\varepsilon \tau}}$. Now we have $\xi(s,\tau) \in \ran(\tilde{R}_\st(s,\tau))$ since
\begin{align*}
	|\tilde{R}_\un(s,\tau) \xi(s,\tau)| & = |\tilde{S}_{\nu}(s,\tau,x)\tilde{R}_\un (s,x) \tilde{S}_{\nu}(s,x,\tau) \xi(s,\tau)| \\
	& \le \tfrac{5}{2}{K}(s) e^{(\beta(s) - \nu(s) - 2\delta K(s))(\tau-x)} \tilde{S}_{\nu}(s,x,\tau) \xi(s,\tau)| \\
	& \le \tfrac{5}{2}{K}(s) e^{(\beta(s) - \nu(s) - 2\delta K(s))(\tau-x)} |\xi(s,x)| \rightarrow 0 \quad \text{as } x \rightarrow \infty
\end{align*}
since $\beta(s) - \nu(s) - 2\delta K(s) \ge \varepsilon$. Assuming $\xi(s,\tau) \in \ran (\tilde{Q}_\st(s,\tau))$ we obtain
\begin{align*}
	0 \neq | w(s,\tau)| & = |S_{\nu}(s,\tau,x) w(s,x)| \le K(s) |w(s,x)| \\
	& \le K(s) |w(s,x) - \xi(s,x)| + K(s) | \tilde{S}_{\nu}(s,x,\tau) \tilde{Q}_\st(s,\tau) \xi(s,\tau)| \\
	& \le K(s) C e^{-\varepsilon x} + \tfrac{5}{2}K(s)^2 e^{(\alpha(s) - \nu(s) + 2\delta K(s))(x-\tau)} \|\xi(s,\cdot)\|_\infty \rightarrow 0 \quad \text{as } x \rightarrow \infty
\end{align*}
where we used $\alpha(s) - \nu(s) + 2\delta K(s) \le - \varepsilon$. This is a contradiction and therefore $\xi(s,\tau) \in \mathrm{span}(\tilde{w}(s,\tau))$. This shows, after normalization of $\tilde{w}(s,\tau)$, that $\tilde{w}(s,\cdot) = \xi(s,\cdot)$ on $J$ and \eqref{RoughnessExpEst} holds. Now take $\tilde{\psi}(s,\tau)$ such that 
\begin{align*}
	\tilde{\psi}(s,\tau)^H \tilde{w}(s,\tau) = 1, \quad \tilde{\psi}(s,\tau)^H \tilde{Q}_\st(s,\tau) = \tilde{\psi}(s,\tau)^H \tilde{R}_\un(s,\tau) = 0.
\end{align*}
Setting $\tilde{\psi}(s,x) = \tilde{S}^*_{\nu} (s,x,\tau)\tilde{\psi}(s,\tau)$, where $\tilde{S}^*_{\nu}(s,\cdot,\cdot)$ is the solution operator of the adjoint $\tilde{\mathcal{A}}^*_{\nu}(s)$, we find that $\tilde{P}_\c(s,x) = \tilde{w}(s,x)\tilde{\psi}(s,x)^H$ is the projector onto $\mathrm{span} \{ \tilde{w}(s,x) \}$ satisfying $\K^n = \ran(\tilde{Q}_\st(s,x)) \oplus \ran(\tilde{P}_\c(s,x)) \oplus \ran(\tilde{R}_\un(s,x))$. Moreover, $\tilde{P}_\c(s)$ depends continuously/analytically on $s$, since $\tilde{\psi}(s,\cdot)$ does, and from \eqref{RoughnessProjEst} we deduce the estimate $|P_\c(s,x) - \tilde{P}_\c(s,x)| \le C e^{- \varepsilon x}$ for $x \in J$. By assumption and the estimate \eqref{RoughnessExpEst} we have that $\tilde{w}(s,\cdot)$ is uniformly bounded from below. This gives
\begin{align*}
	| \psi(s,x) - \tilde{\psi}(s,x)| & = |\tilde{w}(s,x)|^{-1}|\tilde{w}(s,x)(\psi(s,x) - \tilde{\psi}(s,x))^H| \le C |\tilde{w}(s,x)(\psi(s,x) - \tilde{\psi}(s,x))^H| \\
	& \le C |P_\c(s,x) - \tilde{P}_\c(s,x)| + C |\tilde{w}(s,x) - w(s,x)||\psi(s,x)| \le C e^{-\varepsilon x}.
\end{align*}
Finally this implies for all $x,y \in J$ 
\begin{align*}
	| \tilde{S}_\nu(s,x,y) \tilde{P}_\c(s,y) | = |\tilde{w}(s,x) \tilde{\psi}(s,y)^H| \le |\tilde{w}(s,x)| |\tilde{\psi}(s,y)| \le C.
\end{align*}
With $\tilde{P}_\st(s) = \tilde{Q}_\st(s)$ and $\tilde{P}_\un(s) = \tilde{R}_\un(s)$ we have shown that the operator $\tilde{A}_\nu(s)$ has an exponential trichotomy on $J$ with data 
\begin{align*}
	(\tilde{K}, \alpha(s) - \nu(s) + 2\delta K(s), 0, \beta(s) - \nu(s) - 2\delta K(s), \tilde{P}_{\st,\c,\un}(s))
\end{align*}
depending continuously/analytically on $s \in \Omega$ and some $\tilde{K} > 0$ independent on $s \in \Omega$. Now the assertion for $\tilde{A}(s)$ follows. 
\end{proof}

\sect{A matrix lemma}
\label{appendixB}
For a matrix $A \in \C^{m,m}$ define its lower spectral bound by $\lambda^-(A) =  \min \{\Re x^{H} A x : x \in \C^m, |x|=1 \}$ and note that $  \lambda^-(A) =  \min \{ x^{\top} A x : x \in \R^m, |x|=1 \}$ if $A \in \R^{m,m}$.
\begin{lemma} \label{appB:block}
  Let $A, B \in \R^{m,m}$ and  $C \in \C^{m,m}$. Then the block matrix
  \begin{align*}
    M= M(A,B,C)= \begin{pmatrix} 0 & I_m \\ A^{-1}C & -A^{-1}B \end{pmatrix}
  \end{align*}
  has the following properties:
  \begin{itemize}
  \item[(i)] If $\lambda^-(A), \lambda^-(C)>0$ and $|B-B^{\top}|^2 < 16 \lambda^-(A)  \lambda^-(C)$ then $M$ is hyperbolic with
    dimensions $m_{\st}=m_{\un}=m$.
  \item[(ii)] If $\lambda^-(A)>0$, $\lambda^-(C) \ge 0$, if $B=bI_m$, $b>0$ and if $0$ is a simple eigenvalue of $C$
   then $M$ satisfies $m_{\st}=m$, $m_{\c}=1$, $m_{\un}=m-1$.
  \end{itemize}
\end{lemma}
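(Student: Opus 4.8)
The common starting point for both parts is that $\mu\in\C$ is an eigenvalue of $M=M(A,B,C)$ precisely when the quadratic matrix pencil $P(\mu):=\mu^{2}A+\mu B-C$ is singular: from $M(z_{1},z_{2})^{\top}=\mu(z_{1},z_{2})^{\top}$ one reads off $z_{2}=\mu z_{1}$ and $P(\mu)z_{1}=0$, and a Schur-complement computation (note $A$ is invertible since $\lambda^{-}(A)>0$) gives the identity $\det(\mu I_{2m}-M)=\det(A)^{-1}\det P(\mu)$, so that the spectrum of $M$, with algebraic multiplicities, is the zero set of $\mu\mapsto\det P(\mu)$. The analytic input is a Lyapunov-type estimate ruling out spectrum on $i\R$: if $\mu=i\xi$ with $P(i\xi)z_{1}=0$, $|z_{1}|=1$, then writing $z_{1}=x+iy$ with $x,y\in\R^{m}$ and taking the real part of $z_{1}^{H}P(i\xi)z_{1}=0$ yields
\begin{equation*}
  \xi^{2}\big(x^{\top}Ax+y^{\top}Ay\big)+\Re\big(z_{1}^{H}Cz_{1}\big)=-\xi\,x^{\top}(B-B^{\top})y,
\end{equation*}
and hence, using $x^{\top}Ax+y^{\top}Ay\ge\lambda^{-}(A)$, $\Re(z_{1}^{H}Cz_{1})\ge\lambda^{-}(C)$ and $|x^{\top}(B-B^{\top})y|\le\tfrac12|B-B^{\top}|$, the scalar inequality $\lambda^{-}(A)\xi^{2}-\tfrac12|B-B^{\top}|\,|\xi|+\lambda^{-}(C)\le0$.

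For part (i) the hypothesis $|B-B^{\top}|^{2}<16\lambda^{-}(A)\lambda^{-}(C)$ says exactly that this quadratic in $|\xi|$ has negative discriminant and therefore no real root; hence $M$ has no eigenvalue on $i\R$ and is hyperbolic. For the dimension count I would deform along $M_{t}:=M\big((1-t)A+tI,\,(1-t)B,\,(1-t)C+tI\big)$, $t\in[0,1]$; a short estimate shows $(1-t)^{2}|B-B^{\top}|^{2}<16\big((1-t)\lambda^{-}(A)+t\big)\big((1-t)\lambda^{-}(C)+t\big)$, so every $M_{t}$ satisfies the hypothesis of (i) and is hyperbolic. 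Since the eigenvalues of $M_{t}$ depend continuously on $t$ and none can cross $i\R$, $m_{\st}(M_{t})$ is constant on $[0,1]$; at $t=1$ one has $M_{1}=\left(\begin{smallmatrix}0&I\\I&0\end{smallmatrix}\right)$ with $\det(\mu I-M_{1})=(\mu^{2}-1)^{m}$, so $m_{\st}=m_{\un}=m$.

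For part (ii), $B=bI$ is symmetric, so $B-B^{\top}=0$ and the displayed identity becomes $\xi^{2}(x^{\top}Ax+y^{\top}Ay)+\Re(z_{1}^{H}Cz_{1})=0$; since $\lambda^{-}(A)>0$ and $\lambda^{-}(C)\ge0$ this forces $\xi=0$, so $0$ is the only eigenvalue of $M$ on $i\R$. It has geometric multiplicity one, because $\ker(M)=\{(z_{1},0)^{\top}:Cz_{1}=0\}$ and $\dim\ker(C)=1$; its algebraic multiplicity is also one, because a Jordan chain of length two at the eigenvalue $0$ would force $b\,w\in\ran(C)$ for some $w\in\ker(C)\setminus\{0\}$, which is impossible since the simplicity of $0\in\sigma(C)$ gives $\ker(C)\cap\ran(C)=\{0\}$. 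Thus $m_{\c}=1$. To obtain $m_{\st}=m$ I would homotope $A_{t}:=(1-t)A+tI$ with $b$ and $C$ fixed: each $M(A_{t},bI,C)$ has $\lambda^{-}(A_{t})>0$, so by the same two observations $0$ remains its unique eigenvalue on $i\R$ and stays simple, whence $m_{\st}$ is constant along the path. At $A=I$ the determinant identity and the definition of the characteristic polynomial give $\det(\mu I-M(I,bI,C))=\det\big((\mu^{2}+b\mu)I-C\big)=\prod_{\gamma}(\mu^{2}+b\mu-\gamma)^{a_{\gamma}}$, the product over the eigenvalues $\gamma$ of $C$ with their algebraic multiplicities $a_{\gamma}$, and $\lambda^{-}(C)\ge0$ forces $\Re\gamma\ge0$ for every such $\gamma$. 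The factor for $\gamma=0$ is $\mu(\mu+b)$, contributing the center root $0$ and the stable root $-b$; for each of the remaining $m-1$ eigenvalues $\gamma\neq0$ (all with $\Re\gamma\ge0$) a further homotopy $\gamma_{s}:=(1-s)\gamma+s\Gamma$ to a real $\Gamma>0$ — along which $\mu^{2}+b\mu-\gamma_{s}$ has no purely imaginary root, since $\mu=i\xi$ would force $\Re\gamma_{s}=-\xi^{2}\le0$ and hence $\gamma_{s}=0$ — shows that $\mu^{2}+b\mu-\gamma$ has exactly one root with $\Re<0$ and one with $\Re>0$. Summing: $m_{\st}=1+(m-1)=m$, $m_{\un}=m-1$, $m_{\c}=1$.

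The step that calls for genuine care, rather than routine computation, is the bookkeeping in the two homotopy arguments: one must verify that the deformation never leaves the region in which the relevant structure is preserved — hyperbolicity all along the path in (i), and the persistence of a simple zero eigenvalue together with the absence of any other spectrum on $i\R$ all along the path in (ii) — so that the integer-valued quantities $m_{\st}$, $m_{\c}$, $m_{\un}$ cannot jump; and in (ii) one must correctly track how each eigenvalue of $C$ contributes exactly one stable and one unstable root of $M$. The simplicity of $0$ as an eigenvalue of $M$ in (ii) is the subtlest point, since it is precisely there that the hypothesis that $0$ is a simple eigenvalue of $C$ is used, through $\ker(C)\cap\ran(C)=\{0\}$.
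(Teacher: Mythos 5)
Your proof is correct and follows essentially the same strategy as the paper: the Lyapunov-type estimate obtained by contracting the quadratic pencil $P(i\xi)$ against the eigenvector rules out spectrum on $i\mathbb{R}$ (for (i)) or pins it to $0$ (for (ii)), the simplicity of the zero eigenvalue of $M$ is reduced to $\ker(C)\cap\ran(C)=\{0\}$, and the dimension counts are obtained by homotopy to an explicitly solvable case. One point worth noting: in the endgame of (ii) the paper asserts that every nonzero eigenvalue $\mu$ of $C$ satisfies $\Re\mu>0$, but the hypothesis $\lambda^-(C)\ge 0$ only yields $\Re\mu\ge 0$; your additional homotopy $\gamma_s=(1-s)\gamma+s\Gamma$ with $\Gamma>0$, together with the observation that a purely imaginary root of $\mu^2+b\mu-\gamma_s$ would force $\gamma_s=0$, handles the boundary case $\Re\gamma=0$ cleanly and thus closes a small gap that the paper glosses over.
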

\begin{proof}
  Note that $M$ has an eigenvalue $\lambda\in \C$ with eigenvector $(x,y)^{\top}$ if and only if $y=\lambda x$ and $(\lambda^2 A + \lambda B  - C)x= 0$. If $\lambda = i \omega$ for some $\omega \in \R$ then by multiplication with $x^H=(x_1+i x_2)^H=x_1^{\top}-i x_2^{\top}$ and taking the real part we find
  \begin{equation*}
    \begin{aligned}
    0& = - \omega^2(x_1^{\top}A x_1 + x_2^{\top}Ax_2)+ \omega x_2^{\top}(B-B^{\top})x_1 - \Re(x^HCx) \\
     & \le \big(- \omega^2 \lambda^-(A)+\frac{1}{2}|\omega||B-B^{\top}|- \lambda^-(C)\big) |x|^2.
    \end{aligned}
  \end{equation*}
  The condition in assertion (i) guarantees that the prefactor of $|x|^2$ is negative for all $\omega \in \R$, hence $M$ is hyperbolic.
  In case of assertion (ii) the prefactor  $-\omega^2\lambda^-(A)$ vanishes only at $\omega=0$. Let us show that $0$
  is a simple eigenvalue of $M$. Since $0$ is a simple eigenvalue of $C$ we have $\ker(C)= \mathrm{span}(v)$ for some $v \neq 0$
  and $v \notin \ran(C)$. Then one finds that $\ker(M)= \mathrm{span}\big( (v,0)^{\top})$ and that $M \left( \begin{smallmatrix}x \\y
  \end{smallmatrix}\right)= \left(\begin{smallmatrix} v \\ 0 \end{smallmatrix}\right)$ holds if and only if $Cx=bv$. Hence $0$ is also
  a simple eigenvalue of $M$. To determine the dimensions we use a homotopy argument. In case (i) the  homotopy
  $(A,(1-t)B,C),t\in [0,1]$ connects $(A,B,C)$ to $(A,0,C)$, and then $((1-t)A+tI_m,0,(1-t)C+tI_m)$ connects $(A,0,C)$ to $(I_m,0,I_m)$.
  Since the homotopy preserves the conditions no eigenvalue can pass the imaginary axis. The final eigenvalue problem $\lambda^2x=x$ has the eigenvalues $\pm 1$ both of multiplicity $m$ which proves our assertion. In case (ii) we just take the homotopy $((1-t)A+t I_m,bI_m, C)$, $t \in [0,1]$
  and arrive at the eigenvalue problem $(\lambda^2+ \lambda b)x= Cx$. Eigenvalues $\mu \in \C$ of $C$ lead to two eigenvalues
  $\lambda_{\pm}= -\frac{b}{2} \pm \sqrt{\mu + \frac{b^2}{4}}$. In case $\mu \neq 0$  we have $\mathrm{Re}(\mu)>0$ and $\Re \lambda_{\pm}(s)$ are of opposite sign due to $b>0$. In case $\mu=0$
  we obtain eigenvalues $0,-b$. Hence we have $m$ stable, $m-1$ unstable and one central eigenvalue.
  Since no eigenvalue can pass the imaginary axis during continuation and
  zero always stays as a simple eigenvalue, the assertion on the dimensions follows.
\end{proof}

\sect{Proof of Lemma \ref{est}}
\label{appendixC}

\begin{proof}
  Throughout the following we use the inequality $ 2^{-\frac{1}{2}}(1+x) \le \eta(x) \le 1 + x$ for $x\ge 0$.
  
  \ref{item1}. 
  Assertion \eqref{eq3:est1} follows for $k > 0$, $q = 1$ and $\beta \ge 0$ by
\begin{align*}
  \eta^k(x)\int_x^{\infty}\eta^{-(k+1)}(y) e^{\beta(x-y)}dy \le (1+x)^k \int_x^{\infty} 2^{\frac{k+1}{2}}(1+y)^{-(k+1)} dy = k^{-1}2^{\frac{k+1}{2}}.
\end{align*}
For  $k \ge 0$ and $0 \le q < 1$ we have
\begin{align*}
  & \eta^{k}(x)\int_x^\infty \eta^{-(k + q)}(y) e^{\beta(x-y)} dy \le \int_x^\infty \eta^{-q}(y) e^{\beta(x-y)} dy \le 2^{\frac{q}{2}} \int_x^\infty (1 + y)^{-q} e^{\beta(x-y)} dy .
\end{align*}
For $\beta\ge1$ the integral is  bounded by $\tfrac{1}{\beta} \le \beta^{q-1}$. If $\beta<1$ we set
$\tau=\tfrac{1}{\beta}-1$ and find for $\tau \ge x$
\begin{align*}
  \int_x^{\infty}(1+y)^{-q}e^{\beta(x-y)}dy & \le \int_x^{\tau} (1+y)^{-q}dy+ (1+\tau)^{-q}\int_{\tau}^{\infty}e^{\beta(x-y)}dy\\
  & \le \frac{(1+\tau)^{1-q}}{1-q}+ (1+\tau)^{-q}\beta^{-1}= \tfrac{2-q}{1-q}\beta^{q-1}.
\end{align*}
For $\tau \le x$ we can omit the first integral and obtain the same estimate from the second one.

\ref{item3}. Similarly to \ref{item1} we have
\begin{align*}
  & \int_0^x \eta^{-q}(y) e^{\beta(y-x)} dy \le 2^{\frac{q}{2}} \int_0^x (1+y)^{-q} e^{\beta(y-x)} dy.
\end{align*}
For $\beta \ge 1$ the integral is bounded by $\beta^{-1} \le \beta^{q-1}$, while for $\beta<1$ we split again at
$\tau= \tfrac{1}{\beta}-1$ and obtain for $\tau \le x$
\begin{align*}
  \int_0^x(1+y)^{-q}e^{\beta(y-x)}dy & \le \int_0^{\tau}(1+y)^{-q}dy + \int_{\tau}^x(1+\tau)^{-q}e^{\beta(y-x)}dy \le \tfrac{2-q}{1-q}\beta^{q-1}.
\end{align*}
If $\tau \ge x$ we just estimate by the first integral.

\ref{item2}. We prove \eqref{eq3:est3} for $k\ge 1$. By the substitution $\xi=1+x$, $\zeta=1+y$ we obtain
\begin{equation*}
\begin{aligned}
  &\eta^k(x)\int_0^x \eta^{-k}(y)e^{\beta(y-x)} dy  \le 2^{\frac{k}{2}}\int_0^x \Big(\frac{1+x}{1+y}\Big)^k
  e^{\beta( y-x)} dy\\
  &= 2^{\frac{k}{2}} \int_1^{\xi} \Big( \frac{\xi}{\zeta}\Big)^k e^{\beta(\zeta-\xi)} d\zeta=: 2^{\frac{k}{2}} T_k(\xi). 
\end{aligned}
\end{equation*}
We choose $\delta>0$ with $\delta \beta_0^{\frac{k-1}{k}} \le \frac{1}{2}$ and  split  the integral at $\tau = \delta \beta^{\frac{k-1}{k}}\xi$ if $\tau \ge 1$. With
$\gamma= \beta(1 - \delta \beta^{\frac{k-1}{k}})$ and $\max_{y\ge0}y^k e^{-\gamma y}= \left(\frac{k}{e \gamma}\right)^k$ we find for $k>1$ and $\tau\ge 1$
\begin{equation} \label{appc:estT}
\begin{aligned}
  T_k(\xi)
  & \le e^{\beta(\tau-\xi)}\xi^k \int_1^{\tau} \zeta^{-k} d \zeta + \Big(\frac{\xi}{\tau}\Big)^k \int_{\tau}^{\xi} e^{\beta(\zeta-\xi)}d\zeta
    \le e^{\beta(\tau-\xi)}\frac{\xi^k}{k-1} +  \Big(\frac{\xi}{\tau}\Big)^k \frac{1}{\beta} \\
 &  \le \frac{\xi^k}{k-1}e^{-\gamma\xi} + \delta^{-k} \beta^{-k} \le \frac{1}{k-1} \Big( \frac{k}{e \gamma}\Big)^k
  + \delta^{-k} \beta^{-k}.
\end{aligned}
\end{equation}
Since $\frac{\beta}{2} \le \gamma \le \beta$ our assertion follows. If $\tau \le 1$ then \eqref{appc:estT}  still
holds with the first summand omitted and thus $T_k(\xi) \le (\delta \beta)^{-k}$. In case $k=1$ we set $\delta=\frac{1}{2}$, $\tau=\frac{\xi}{2}$, $\gamma=\frac{\beta}{2}$.
As above, if $\tau \le 1$ then we can omit the first summand in \eqref{appc:estT} and obtain $T_1(\xi) \le \frac{2}{\beta}$.
Otherwise, the estimate \eqref{appc:estT} gives $T_1(\xi)\le 2 g\big(\tfrac{\xi}{2}\big) + \tfrac{2}{\beta}$ where $g(z)= z \log(z) e^{-\beta z}$ for $z \ge 1$. It remains to estimate $g$. If $0<\beta z \le 1$ then we have $1 \le z \le \beta^{-1}$ and therefore, $g(z)\le z \log(z) \le \beta^{-1} |\log(\beta)|$.
Otherwise we use $\log(\beta z) \le \beta z $  and find
\begin{align*}
  g(z)& = \beta^{-1}(\beta z)\left[ \log(\beta z) - \log(\beta)\right] e^{-\beta z} \le \beta^{-1}\left[ \sup_{y \ge 1}(y \log(y)e^{-y}) + |\log(\beta)| \sup_{y\ge 1}( y e^{-y}) \right]\\
  & \le \beta^{-1} \left[ \sup_{y\ge 1}(y^2 e^{-y})+ e^{-1} |\log(\beta)| \right] \le \beta^{-1}[4e^{-2} +e^{-1} |\log(\beta)|].
  \end{align*}
\end{proof}

\end{document}